\newtheorem{problem}{Problem}
      \theoremstyle{plain}
      \newtheorem{theorem}{Theorem}[section]
      \newtheorem{lemma}[theorem]{Lemma}
                 \newtheorem{corollary}[theorem]{Corollary}
\newtheorem{conjecture}[theorem]{Conjecture}
      \theoremstyle{definition}
      \newtheorem{definition}[theorem]{Definition}
      \theoremstyle{remark}
\def\proj{\mbox{\rm proj}}
\def\ex{{\rm{ex}}}
\def\e{\epsilon}
\def\cB{{\cal B}}
\def\cC{{\cal C}}
\def\d{d^*}
\title{A survey of Tur\'an problems for expansions}
\author{Dhruv Mubayi\thanks{Department of Mathematics, Statistics, and Computer Science, University of Illinois, Chicago, IL, 60607 USA.  Research partially supported by NSF grant DMS-1300138. Email: {\tt mubayi@uic.edu}} \qquad
Jacques Verstra\"ete\thanks{Department of Mathematics, University of California, San Diego, CA, 92093 USA.  Research supported by NSF grant DMS-1362650. Email: {\tt jacques@ucsd.edu}}  }
\begin{document}

\maketitle
\medskip

\begin{abstract}
The $r$-expansion $G^+$ of a graph $G$ is the $r$-uniform hypergraph obtained from $G$ by enlarging each edge of
$G$ with a  vertex subset of size $r-2$ disjoint from $V(G)$ such that distinct edges are enlarged by disjoint subsets. Let $\ex_r(n,F)$ denote the maximum number
of edges in an $r$-uniform hypergraph with $n$ vertices not containing any copy of the $r$-uniform hypergraph $F$.
Many problems in extremal set theory ask for the determination of $\ex_r(n,G^+)$ for various graphs $G$. We survey these Tur\'an-type problems, focusing on recent developments.
\end{abstract}

\section{Introduction}

An $r$-uniform hypergraph $F$, or simply {\em $r$-graph}, is a family of $r$-element subsets of a finite set.
We associate an $r$-graph $F$ with its edge set and call its vertex set $V(F)$. Given an $r$-graph $F$,
let $\ex_r(n,F)$ denote the maximum number of edges in an $r$-graph on $n$ vertices that does not contain a copy of $F$ (if the uniformity is obvious from context, we may omit the subscript $r$).
The {\em expansion} of a graph $G$ is the $r$-graph $G^+$ with edge set $\{e \cup S_e : e \in G\}$ where
$|S_e|=r-2$, $S_e \cap V(G)=\emptyset$ for all $e \in G$, and
$S_e \cap S_{e'}=\emptyset$ for $e \ne e'$. By definition, the expansion of $G$ has exactly $|G|$ edges and $|V(G)|+(r-2)|G|$ vertices.

\medskip

Expansions include many important hypergraphs who extremal functions have been investigated, for instance the celebrated Erd\H{o}s-Ko-Rado Theorem~\cite{EKR}
 is the case of expansions of a matching of size two. In this article we survey these results focusing on recent developments and proof techniques.

A crucial parameter that determines the growth rate of
 $\ex_r(n, G^+)$ is the chromatic number of $G$. Indeed, an old result of Erd\H os~\cite{E1964} implies that for fixed $r \ge 2$, the function
 $\ex_r(n, G^+)$ has order of magnitude $n^r$ if and only if $\chi(G)>r$. As we will see in the next section, the asymptotics in this case have been determined and we say that these problems fall under the nondegenerate case. The majority of open problems in the area arise when $\chi(G) \leq r$ and we will refer to this as the degenerate case.

Along with obtaining exact extremal results, one can ask about the
structure of nearly extremal structures. The seminal result in
this direction is the stability theorem for graphs,
proved independently by Erd\H os and Simonovits (see
\cite{Sstab}). Similar theorems
for hypergraphs have been proven more  recently. For example, papers \cite{FMP, FPS2, FS,
KM, KS, KS2} each prove stability theorems for hypergraphs,
\cite{FS, KS} for the Fano plane, \cite{FPS2} for $\{123, 124,
125, 345\}$, \cite{KS2, FMP} for certain types of hypergraph triangles,  \cite{KM} for
cancellative triple systems, and also for $\{123, 124, 345\}$.  By now there are many other stability results and the list above is far from exhaustive.
Stability theorems have also proved useful to obtain exact
results. This approach, first used by Simonovits~\cite{Sstab} to determine the exact Tur\'an number of color critical graphs, was  developed and applied to hypergraph problems in the above results. The method proceeds by
first proving an approximate result, then a stability statement, and
finally uses the stability result to guarantee an exact extremal
result. All of these results applied to the nondegenerate case, i.e.
when the order of magnitude of $\ex_r(n, F)$ is $n^r$.

It was shown in~\cite{M2} that this method can be used in the degenerate case as well by solving a generalization of an old question of Katona. Since then, this approach has been used many times, for example, to solve the extremal problem for $k$-regular subsystems~\cite{MV2, K}, $k$-uniform clusters and
simplices~\cite{CLW, JPY,KM2, MRam1, MRam}
and for expansions of  acyclic graphs and cycles~\cite{KMV, KMV2}.

This survey is organized as follows.  In
Section 2 we state results for the nondegenerate case, and in Section 3 we survey the results in the degenerate case.  Section 4 describes the important proof techniques, focusing on the classical Delta-system method and a new approach that uses  shadows, random sampling, and canonical Ramsey theory. In Section 5 we will use these techniques to give short proofs of a variety of recent results in the degenerate case. Specifically, we will give proofs of the asymptotic results for $\ex_3(n, K_{s,t}^+)$ when $t>(s-1)!$, for $\ex_r(n, G^+)$   when $r \ge 4$ and $G$ is a forest, and for $\ex_3(n, C_4^+)$. Our hope is that these will  illustrate many of the important proof ideas used in this area. Section 6 lists some  open problems.

\subsection{Notation and Terminology}

For a set $V$, let $2^V$ be the set of subsets $V$, and let ${V \choose r}$ be the set of $r$-element subsets of $V$.
For a positive integer $n$, we write $[n] = \{1,2,\dots,n\}$.
A hypergraph with vertex set $V$ is a family of sets $H \subset 2^V$. If $H \subset {V \choose r}$
then $H$ is an {\em $r$-graph}. If $r = 2$, $H$ is a graph and
if $r = 3$, $H$ is a triple system. If $S \subset V$, then  the subhypergraph of $H$ induced by $S$ is $H[S] = \{e \in H : e \subset S\}$.
The {\em degree of $S$} in $H$ is $d_H(S) = |\{e \in H: S \subset e\}|$, and we write $d_H(v)$ if $S = \{v\}$.
The {\em neighborhood} of $S$ is $N_H(S) = \{e \backslash S : S \subset e \in H\}$, and we write
$N_H(v)$ if $S = \{v\}$. The subscript $H$ is suppressed when $H$ is clear from context.
For $s \in \mathbb N$, define the {\em shadow $s$-graph}
\[ \partial_sH = \left\{S \in {V\choose s} :  d_H(S) \geq 1\right\}.\]
We write $\partial H$ instead of $\partial_2 H$.
For a set $S \subset V$, the {\em trace} of $H$ on $S$ is
\[ H|_S = \{e \cap S : e \in H\}.\]
For a positive integer $n$, the {\em Tur\'{a}n Number} of an $r$-graph $F$ is
\[ \ex_r(n,F) = \max\left\{|H| : H \subset {[n] \choose r}, F \not\subset H\right\}.\]
If $ F \not\subset H$ we say that $H$ is $F$-free. An  $n$-vertex $F$-free $r$-graph $H$ is {\em extremal for} $F$ if
$|H|= \ex_r(n,F)$.

 \section{The non-degenerate case}

 We begin by observing that when $l\ge r=2$ and $G=K_{l+1}$
 $$\ex_2(n, G^+)=\ex_2(n, G)=\ex_2(n, K_{l+1}).$$ Tur\'an's Theorem~\cite{T} states that $\ex(n, K_{l+1})$ is uniquely achieved (for all $n > l$) by the Tur\'an graph, which is the complete $l$-partite graph on $n$ vertices with part sizes that differ by at most one. More generally, for any graph $G$ with $\chi(G)=l+1>2$, the Erd\H os-Stone-Simonovits Theorem~\cite{ErdosStone, ES66} determines the asymptotics of $\ex_2(n, G^+)=\ex_2(n, G)$.  In general exact results are known for very few cases; one such case is when $G$ is color critical~\cite{Sstab}.

In order to state the results for $r \ge 3$,  we must generalize the definition of the Tur\'an graph to hypergraphs.
An $r$-graph is $l$-partite if its vertex set can be partitioned
into $l$ classes, such that every edge has at most one vertex from
each class. Thus in particular, there are no edges if $l<r$. A
 complete $l$-partite $r$-graph is one where all of the allowable edges (given a vertex $l$-partition) are present. For $n, l, r \ge 1$, let $T_r(n,l)$ be the
complete $l$-partite $r$-graph on $n$ vertices with no two part
sizes differing by more than one. Thus the part sizes are
$n_i=\lfloor (n+i-1)/l\rfloor$ for $i \in [l]$.  Among all
$l$-partite $r$-graphs on $n$ vertices, $T_r(n,l)$ has the most
edges. The number of edges in $T_r(n,l)$ is
$$t_r(n,l)=\sum_{S \in {[l]\choose r}}\prod_{i \in S} n_i \qquad \hbox{ and } \qquad t_r(n,l)\sim\frac{l(l-1)\cdots (l-r+1)}{l^r}{n \choose r}$$
for fixed $l\ge r$.
Clearly $K_{l+1}^+ \not\subset T_r(n, l)$ so
$\ex_r(n, K_{l+1}^+)\ge t_r(n,l)$. The first author~\cite{Mubayi} proved that if $l\ge r$ is fixed, then
$\ex_r(n, K_{l+1}^+) < t_r(n,l)+ o(n^r)$ and conjectured that this could be further improved to an exact result for $n$ sufficiently large. This was subsequently proved by Pikhurko~\cite{Pikhurko}.

\begin{theorem} {\bf (\cite{Mubayi, Pikhurko})}\label{mp}
Fix $l \ge r \ge 2$ and let $n$ be sufficiently large. Then
$$\ex_r(n, K_{l+1}^+) = t_r(n,l).$$
Moreover equality is achieved only by the Tur\'an $r$-graph $T_r(n,l)$.
\end{theorem}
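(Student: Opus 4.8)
The plan is to follow the stability-to-exactness strategy described in the introduction: use the asymptotic bound (already available), upgrade it to a stability statement, and then run a local cleaning argument that exploits the abundance of edges forced by stability. First, the lower bound $\ex_r(n,K_{l+1}^+)\ge t_r(n,l)$ is immediate: in any copy of $K_{l+1}^+$ inside an $l$-partite $r$-graph, each of the $\binom{l+1}{2}$ pairs among the $l+1$ core vertices would lie in some edge, and an edge meets each part at most once, so the two endpoints of every core pair lie in distinct parts; hence the $l+1$ core vertices would occupy $l+1$ distinct parts, which is impossible. The same remark shows that \emph{every} $l$-partite $r$-graph is $K_{l+1}^+$-free, a fact I will use twice more below. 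It therefore remains to prove $\ex_r(n,K_{l+1}^+)\le t_r(n,l)$ together with the uniqueness of the extremal configuration, and for this I may assume the first author's asymptotic bound $\ex_r(n,K_{l+1}^+)\le t_r(n,l)+o(n^r)$.

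The next step is a stability statement: for every $\e>0$ there is $\delta>0$ so that any $K_{l+1}^+$-free $r$-graph $H$ on $n\ge n_0(\e)$ vertices with $|H|\ge t_r(n,l)-\delta n^r$ admits a partition $V(H)=V_1\cup\cdots\cup V_l$ with at most $\e n^r$ edges meeting some part in two or more vertices; moreover one can then arrange $|V_i|=n/l+o(n)$ for all $i$ and ensure all but $o(n)$ vertices have degree within a $(1+o(1))$ factor of the maximum degree in $T_r(n,l)$. This follows from supersaturation (any $r$-graph with $t_r(n,l)+\Omega(n^r)$ edges contains $\Omega(n^{|V(K_{l+1}^+)|})$ copies of $K_{l+1}^+$), a removal/defect-type argument, and the fact that $T_r(n,l)$ is the unique densest $l$-partite $r$-graph; it can also be extracted from the proof of the asymptotic bound in \cite{Mubayi}.

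For exactness, let $H$ be an $n$-vertex extremal $K_{l+1}^+$-free $r$-graph, so $|H|\ge t_r(n,l)$; apply stability with $\e$ small in terms of $l,r$, and fix a partition $V_1\cup\cdots\cup V_l$ maximizing the number of \emph{legal} edges (those meeting each part at most once), the rest being \emph{bad}. A standard minimum-degree reduction (for instance by induction on $n$, deleting a vertex of minimum degree and using the theorem for $n-1$ together with $t_r(n,l)-t_r(n-1,l)=\Omega(n^{r-1})$, or by symmetrization) shows $\delta(H)=(1-o(1))$ times the maximum degree in $T_r(n,l)$; combined with the maximality of the partition and stability, this gives each $|V_i|=n/l+o(n)$, that there are $o(n^r)$ bad edges, and that every $v\in V_i$ lies in $\Omega(n^{r-1})$ legal edges hitting $V_j$ for each $j\ne i$. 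The heart of the proof is then to show there are \emph{no} bad edges. Suppose $\{u,w\}\cup S_0\in H$ with $u,w$ in the same part $V_i$. I would build a copy of $K_{l+1}^+$ with core $\{u,w\}\cup\{x_j:j\ne i\}$, choosing $x_j\in V_j$ so that each of the pairs $\{u,x_j\}$, $\{w,x_j\}$, $\{x_{j'},x_j\}$ has codegree $\Omega(n^{r-2})$ in $H$ --- possible since each relevant ``good'' set has size $\Omega(n)$ by the previous sentence, while only $O(1)$ vertices are ever forbidden --- and then fill in the $(r-2)$-element expansion sets one pair at a time (handling $\{u,w\}$ first via the edge $\{u,w\}\cup S_0$), at each step picking an expansion set avoiding the $O(1)$ vertices used so far; the codegrees being $\gg 1$ makes this possible. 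This contradiction shows $H$ is $l$-partite. Finally, if some legal $r$-set were missing from $H$, adding it would keep $H$ $l$-partite, hence $K_{l+1}^+$-free, contradicting extremality; so $H$ is a complete $l$-partite $r$-graph, and since $\sum_{S\in\binom{[l]}{r}}\prod_{i\in S}|V_i|$ is uniquely maximized, subject to $\sum_i|V_i|=n$, by the balanced partition, we get $H=T_r(n,l)$ and $\ex_r(n,K_{l+1}^+)=t_r(n,l)$.

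The main obstacle is the ``no bad edges'' step, where a single local irregularity must be turned into a copy of $K_{l+1}^+$: this is precisely what forces the use of the \emph{quantitative} form of stability (near-balanced parts, near-maximal degrees, $\Omega(n^{r-2})$ codegrees of cross-pairs) rather than merely the asymptotic edge count, and the delicate point is to fix the constants so that the $\Omega(n)$-sized choice sets in the greedy embedding survive after deleting the $O(\e)\,n$- or $o(n)$-sized exceptional sets produced by stability. Proving stability itself and carrying out the minimum-degree reduction cleanly are the other places where genuine work is needed.
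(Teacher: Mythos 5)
Your proposal follows exactly the stability-to-exactness route that Mubayi and Pikhurko actually used: Mubayi proved the asymptotic bound and the stability theorem (stated in the survey as Theorem~\ref{stab}), and Pikhurko used stability plus a cleaning argument to extract the exact result and uniqueness. The survey itself does not reproduce a proof of Theorem~\ref{mp}, so I am comparing against the approach in the cited works, and your outline is faithful to it: lower bound from $l$-partiteness, stability to get a max-cut partition with few bad edges, degree/balance normalization, a greedy embedding of $K_{l+1}^+$ from any bad edge, and a final balancing argument.

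One small caution on the step you phrase as "induction on $n$, deleting a vertex of minimum degree and using the theorem for $n-1$": a literal induction on $n$ has no base case, since the exact result is only claimed for $n$ large. The way this is actually carried out is by \emph{iterated} deletion: repeatedly delete vertices of degree below $(1-\epsilon)(t_r(m,l)-t_r(m-1,l))$; each deletion raises the surplus above $t_r(\cdot,l)$ by $\Omega(\epsilon n^{r-1})$, so after $\Omega(n)$ deletions one would exceed the asymptotic upper bound $t_r(\cdot,l)+\delta'(\cdot)^r$ for $\delta'$ chosen small relative to $\epsilon$ --- a contradiction. Hence the process halts after $o(n)$ deletions, and if it halts after $k>0$ steps the surviving graph has strictly more than $t_r(n-k,l)$ edges, which contradicts the cleaning argument applied to it; so $k=0$ and the minimum degree bound holds for $H$ itself. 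This is the content of your "minimum-degree reduction," but it rests on the quantitative asymptotic bound, not on an induction hypothesis at $n-1$. With that correction the rest of your sketch is sound, and the remaining delicate points you flag (ensuring $u,w$ and the chosen $x_j$'s all have near-maximal legal degrees to every other part, and that the $O(1)$ forbidden vertices never exhaust the $\Omega(n)$-sized candidate sets) are precisely where the technical work in Pikhurko's paper lies.
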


It was observed by Alon and Pikhurko~\cite{AP} that the approach applied to prove Theorem \ref{mp} can be extended to any color critical graph $G$ with $\chi(G)>r$.  More generally, a result of Erd\H os~\cite{E1964}, the supersaturation technique (see Erd\H os-Simonovits~\cite{ES}), and the asymptotic result of the first author~\cite{Mubayi} give the following.

\begin{theorem} \label{ess}
Fix integers $l\ge r \ge 2$ and a graph $G$ with $\chi(G)=l+1$.  Then
$$\ex_r(n, G^+) \sim t_r(n,l).$$
\end{theorem}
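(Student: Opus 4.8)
The plan is to establish the two matching bounds $\ex_r(n,G^+)\ge t_r(n,l)$ and $\ex_r(n,G^+)\le t_r(n,l)+o(n^r)$. For the lower bound I would simply verify that the Tur\'an $r$-graph is $G^+$-free: if $G^+\subseteq T_r(n,l)$, then assigning to each vertex of $G$ the index of the part of $T_r(n,l)$ containing its image gives a proper colouring of $G$ with at most $l$ colours, because every edge of $T_r(n,l)$ meets each part at most once and so the two core vertices of every edge of $G^+$ land in distinct parts. This contradicts $\chi(G)=l+1$, so $\ex_r(n,G^+)\ge |T_r(n,l)|=t_r(n,l)$ as soon as $n\ge |V(G)|+(r-2)|E(G)|$.

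For the upper bound I would first reduce to a complete multipartite target. Since $\chi(G)=l+1$, the graph $G$ embeds in $K:=K_{l+1}(t,\dots,t)$ with $t=|V(G)|$, hence $G^+\subseteq K^+$ and it suffices to prove $\ex_r(n,K^+)\le t_r(n,l)+o(n^r)$. So suppose $H$ is an $n$-vertex $r$-graph with $|H|\ge t_r(n,l)+\varepsilon n^r$. By the asymptotic part of Theorem~\ref{mp} (the bound $\ex_r(n,K_{l+1}^+)=t_r(n,l)+o(n^r)$ of \cite{Mubayi}), for large $n$ we have $|H|>\ex_r(n,K_{l+1}^+)+\tfrac{\varepsilon}{4}n^r$, so the Erd\H os--Simonovits supersaturation technique \cite{ES} yields $\Omega(n^{v})$ copies of $K_{l+1}^+$ in $H$, where $v=|V(K_{l+1}^+)|=(l+1)+(r-2)\binom{l+1}{2}$.

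Next I would blow this up. Let $\Gamma$ be the graph of all vertex pairs lying in at least $D$ edges of $H$, for a large constant $D$; since each pair outside $\Gamma$ lies in fewer than $D$ edges, at least $|H|-D\binom{n}{2}=|H|-o(n^r)$ edges of $H$ have all of their pairs in $\Gamma$, and this family $H'$ still has more than $\ex_r(n,K_{l+1}^+)+\tfrac{\varepsilon}{8}n^r$ edges, hence still contains $\Omega(n^v)$ copies of $K_{l+1}^+$. The $(l+1)$-element core of each such copy is a clique of $\Gamma$ (its pairs lie in edges of $H'$, which are cliques of $\Gamma$), and as each core underlies at most $n^{(r-2)\binom{l+1}{2}}$ copies, the $(l+1)$-uniform hypergraph of $(l+1)$-cliques of $\Gamma$ has $\Omega(n^{l+1})$ edges. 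By the theorem of Erd\H os~\cite{E1964}, that the Tur\'an number of a complete $(l+1)$-partite $(l+1)$-graph is $o(n^{l+1})$, it contains $K_{l+1}^{(l+1)}(t,\dots,t)$, which unwinds to pairwise disjoint $W_1,\dots,W_{l+1}$ with $|W_i|=t$ and every cross pair in $\Gamma$, i.e.\ of codegree $\ge D$ in $H$. Finally I would embed $K^+$ into $H$ by taking $W_1\cup\dots\cup W_{l+1}$ as the core and processing the cross pairs one at a time, extending each to an edge of $H$ through it whose remaining $r-2$ vertices are new; this gives $G^+\subseteq K^+\subseteq H$, contradicting $G^+$-freeness, so $\ex_r(n,G^+)\le t_r(n,l)+o(n^r)$, and with the lower bound the theorem follows.

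The main obstacle is this last embedding step, which is where the expansion setting genuinely departs from the classical Erd\H os--Stone--Simonovits argument: each edge of the target carries its own private block of $r-2$ expansion vertices, all mutually disjoint, so when I extend a cross pair $\{x,y\}$ to an edge of $H$ I must dodge the boundedly many already-used vertices — and a pair of codegree exactly $D$ can have all $D$ of its edges running through those vertices when $r\ge4$. For $r=3$ this cannot occur and the greedy embedding works verbatim. For $r\ge4$ the remedy is a more thorough preliminary cleaning of $H$: iteratively discard, for every $2\le j\le r-1$, the $j$-sets of positive but small codegree (with a suitable decreasing sequence of thresholds), which deletes only $O(n^{r-1})=o(n^r)$ edges and leaves a host in which each greedy extension is always available. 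Everything else — the lower bound, the embedding $G\subseteq K_{l+1}(t,\dots,t)$, supersaturation, and the application of Erd\H os's hypergraph theorem — is routine.
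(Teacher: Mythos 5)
Your proof is correct and uses exactly the ingredients the paper cites for this theorem: Mubayi's asymptotic bound $\ex_r(n,K_{l+1}^+)=t_r(n,l)+o(n^r)$ (the upper-bound half of Theorem~\ref{mp}), Erd\H{o}s--Simonovits supersaturation, and Erd\H{o}s's theorem that complete $(l+1)$-partite $(l+1)$-graphs have Tur\'an number $o(n^{l+1})$, together with the trivial lower bound from $T_r(n,l)$ and the reduction $G^+\subseteq K_{l+1}(t,\dots,t)^+$. You also correctly isolate the only genuinely delicate point --- that for $r\ge 4$ a bounded codegree threshold on the cross pairs does not by itself let the greedy expansion dodge the boundedly many previously used vertices --- and your remedy of iterated codegree cleaning on $j$-sets for $2\le j\le r-1$ (in the spirit of Lemma~\ref{fullsub}, with each $j$-set processed at most once so only $O(n^{r-1})=o(n^r)$ edges are discarded, and thresholds $D_j$ chosen larger than roughly $\binom{B}{r-j}$ for the eventual forbidden-set size $B$) is the standard and correct fix.
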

Theorem~\ref{ess} determines the asymptotics of $\ex_r(n, G^+)$ for all $G$ with $\chi(G)>r$.

The Erd\H os-Simonovits stability theorem~\cite{Sstab} states that if an $n$-vertex $K_{l+1}$-free
graph ($n$ large) has almost as many edges as $T_2(n,l)$, then its
structure is very similar to that of $T_2(n,l)$. The analogous result for the $r$-expansion of $K_{l+1}$ was proved in~\cite{Mubayi}.

\begin{theorem}  {\bf (\cite{Mubayi})}\label{stab} Fix $l \ge r \ge 2$, and $\delta >0$. Then there
exists an $\e >0$ and an $n_0$ such that the following holds for
all $n >n_0$: If ${G}$ is an $n$-vertex $K_{l+1}^+$-free $r$-graph with at least $t_r(n,l)-\e n^r$
edges, then $G$ can be transformed to $T_r(n,l)$ by adding and
deleting at most $\delta n^r$ edges.
\end{theorem}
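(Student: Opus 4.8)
The plan is to replace $G$ by an auxiliary graph $\Gamma$ on the same vertex set that records which pairs of vertices have large codegree, to show $\Gamma$ is $K_{l+1}$-free and carries almost all of $G$ on its $r$-cliques, and then to pull back the Erd\H os--Simonovits stability theorem (in its clique-counting form) to $G$. When $r=2$ we have $G^+=G$ and $T_r(n,l)=T_2(n,l)$, so the statement is literally the Erd\H os--Simonovits stability theorem; assume henceforth $r\ge 3$. Fix $l\ge r\ge 3$ and $\delta>0$, choose auxiliary constants $\e\ll\e_1\ll\e_2\ll\delta$ and a large $n_0$, and let $G\subset\binom{[n]}{r}$ with $n>n_0$ be $K_{l+1}^+$-free with $|G|\ge t_r(n,l)-\e n^r$. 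Let $\Gamma$ be the graph on $[n]$ whose edges are those pairs $\{u,v\}$ with $d_G(u,v)\ge\e_1 n^{r-2}$. First, $\Gamma$ is $K_{l+1}$-free: if $v_1,\dots,v_{l+1}$ spanned a clique in $\Gamma$ we could build a copy of $K_{l+1}^+$ in $G$ greedily, processing the $\binom{l+1}{2}$ pairs one at a time while maintaining the set $W$ of vertices used so far, which starts as $\{v_1,\dots,v_{l+1}\}$ and stays of size at most $C:=(l+1)+\binom{l+1}{2}(r-2)$. For the current pair $\{v_i,v_j\}$ there are $\ge\e_1 n^{r-2}$ edges of $G$ through it, and at most $C\binom{n}{r-3}$ of them meet $W\setminus\{v_i,v_j\}$, so for $n$ large we can pick an edge $e\supseteq\{v_i,v_j\}$ with $(e\setminus\{v_i,v_j\})\cap W=\emptyset$, adjoin $e\setminus\{v_i,v_j\}$ to $W$, and continue, ending with a $K_{l+1}^+\subset G$, a contradiction.

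Next, most of $G$ sits on the $r$-cliques of $\Gamma$. An edge $e\in G$ that fails to span $K_r$ in $\Gamma$ must contain a pair of codegree strictly between $1$ and $\e_1 n^{r-2}$ (every pair inside an edge of $G$ has positive codegree), and summing codegrees over such pairs bounds the number of such edges by $\binom n2\,\e_1 n^{r-2}\le\e_1 n^r/2$. Hence, writing $K_r(\Gamma)$ for the family of $r$-sets spanning $K_r$ in $\Gamma$, we get $|G\cap K_r(\Gamma)|\ge t_r(n,l)-\e_1 n^r$; in particular $\Gamma$ is a $K_{l+1}$-free graph on $[n]$ with at least $t_r(n,l)-\e_1 n^r$ copies of $K_r$. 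It is a classical fact that a $K_{l+1}$-free graph on $n$ vertices has at most $t_r(n,l)$ copies of $K_r$, with $T_2(n,l)$ extremal; the stability strengthening needed here is that a $K_{l+1}$-free graph with at least $t_r(n,l)-\e_1 n^r$ copies of $K_r$ admits a partition of its vertex set into $l$ parts with at most $\e_2 n^2$ edges inside the parts. This can be deduced from the Erd\H os--Simonovits stability theorem by a supersaturation argument: a $K_{l+1}$-free graph that is $\Omega(n^2)$-far from every complete $l$-partite graph has, by Erd\H os--Simonovits, at most $t_2(n,l)-\Omega(n^2)$ edges, and within the class of $K_{l+1}$-free graphs losing a positive fraction of the edge budget --- or of the balance of an optimal partition --- costs $\Omega(n^r)$ copies of $K_r$, contradicting the count above once $\e_1$ is small. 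Apply this to $\Gamma$ to obtain parts $V_1,\dots,V_l$.

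Finally we show that $G$ can be made into $T_r(n,l)$ by few additions and deletions. Let $R$ be the family of $r$-sets meeting each $V_i$ in at most one vertex. If $e\in G$ has two vertices in a common part, then either $e\notin K_r(\Gamma)$ (at most $\e_1 n^r/2$ edges) or $e$ contains an edge of $\Gamma$ inside a part (at most $\e_2 n^2\cdot n^{r-2}=\e_2 n^r$ edges), so $|G\setminus R|\le 2\e_2 n^r$ and $|G\cap R|\ge t_r(n,l)-(\e+2\e_2)n^r$. Since $|R|\le t_r(n,l)$ with near-equality only when the $|V_i|$ are nearly balanced, the inequalities $t_r(n,l)\ge|R|\ge|G\cap R|\ge t_r(n,l)-(\e+2\e_2)n^r$ force $\sum_i(|V_i|-n/l)^2=O(\e_2 n^2)$, so rebalancing the partition moves $O(\sqrt{\e_2}\,n)$ vertices and turns $R$ into the edge set of a copy of $T_r(n,l)$ after altering $O(\sqrt{\e_2}\,n^r)$ edges; also $|R\setminus G|=|R|-|G\cap R|\le(\e+2\e_2)n^r$. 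Adding up, $G$ is transformed into $T_r(n,l)$ by adding and deleting $|G\setminus R|+|R\setminus G|+O(\sqrt{\e_2}\,n^r)=O(\sqrt{\e_2}\,n^r)$ edges, which is at most $\delta n^r$ once $\e_2$, and then $\e_1$ and $\e$, are chosen small enough and $n_0$ large enough.

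The one genuinely hard step is the clique-counting stability statement applied to $\Gamma$: the rest is the definition of $\Gamma$, the short greedy embedding, and bookkeeping, whereas this step is the real extremal input and requires care in quantifying how the number of copies of $K_r$ drops as a $K_{l+1}$-free graph moves away from $T_2(n,l)$. An alternative to the reduction above is to run the Delta-system method directly on $G$, which is the route taken in several of the cited references and is described later in the survey.
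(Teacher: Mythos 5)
Your approach matches that of the cited paper [Mubayi]: pass to the codegree graph $\Gamma$ whose edges are the pairs of codegree at least $\e_1 n^{r-2}$, show $\Gamma$ is $K_{l+1}$-free by a greedy expansion of $K_{l+1}^+$, observe that all but $O(\e_1 n^r)$ edges of $G$ span $r$-cliques of $\Gamma$, and transfer graph-level stability to $G$. The one step that is not a formal consequence of the edge version of Erd\H os--Simonovits --- and you correctly flag it as the crux --- is the clique-counting stability lemma: the assertion that within the class of $K_{l+1}$-free graphs, losing $\Omega(n^2)$ edges (or being $\Omega(n^2)$-far from $l$-partite) costs $\Omega(n^r)$ copies of $K_r$. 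This needs its own extremal argument (e.g.\ a Zykov-type symmetrization, or the direct lemma proved in [Mubayi]); as written, the implication ``few edges $\Rightarrow$ few $K_r$'s'' is a claim, not a corollary, and you should elevate it to a stated lemma and prove it. The remaining bookkeeping --- the greedy embedding showing $\Gamma$ is $K_{l+1}$-free, the shadow-counting bound $|G\setminus K_r(\Gamma)| \le \e_1 n^r/2$, the bounds on $|G\setminus R|$ and $|R\setminus G|$, and the rebalancing cost $O(\sqrt{\e_2}\,n^r)$ --- is correct.
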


 \section{The degenerate case}
 As mentioned earlier, most of the activity around $\ex_r(n, G^+)$ has focused on the degenerate case, i.e.,  when $\chi(G) \leq r$. In fact, many famous open problems in extremal set theory deal with very simple graphs $G$.  In this section we survey these results.    Section 3.1 deals with matchings, section 3.2 with stars, section 3.3 with paths and cycles, section 3.4 with trees, and sections 3.5--3.7 with a variety of graphs containing cycles.

\subsection{Matchings}
Let us assume that $G=M_t=tK_2$ is a matching with $t$ edges. The simplest case is $t=2$, where $G^+$ comprises two disjoint edges. Here the Erd\H os-Ko-Rado theorem determines $\ex_r(n, M_2^+)$ for all $r,n$.  Say that a hypergraph $H$ is a {\em star} if there is a vertex of $H$ that lies in all edges of $H$.

\begin{theorem} {\bf (Erd\H os-Ko-Rado~\cite{EKR})} \label{ekrthm}
Let $n,r \ge 2$. If $n < 2r$, then
$\ex_r(n, M_2^+)= {n \choose r}$ while if $n \ge 2r$, then  $\ex_r(n, M_2^+)= {n-1 \choose r-1}$.
For $n >2r$,  equality holds above only for a  star.
\end{theorem}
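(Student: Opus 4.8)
The plan is to prove the Erd\H os-Ko-Rado theorem via the Katona circle (permutation) method, which gives the cleanest route to both the bound and the characterization of equality. First observe that the case $n < 2r$ is trivial: any two $r$-subsets of an $n$-set with $n < 2r$ must intersect, so the whole of ${[n] \choose r}$ is $M_2^+$-free (here $M_2^+$ is just two disjoint $r$-edges), giving $\ex_r(n, M_2^+) = {n \choose r}$. From now on assume $n \ge 2r$, so that an intersecting $r$-graph is precisely an $M_2^+$-free one, and a star on a fixed vertex shows $\ex_r(n, M_2^+) \ge {n-1 \choose r-1}$.

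For the upper bound I would argue as follows. Fix a cyclic ordering of $[n]$, i.e.\ place $1, 2, \dots, n$ around a circle, and call an $r$-set an \emph{arc} if its elements are consecutive along the circle. The first step is a counting lemma: for $n \ge 2r$, any intersecting family can contain at most $r$ of the $n$ arcs of a given cyclic order. This is the heart of the argument: if $A$ is an arc in the family, any other arc in the family must overlap $A$, and one checks directly that the arcs overlapping a fixed arc $A$ split into those ``starting'' inside $A$ and those ``ending'' inside $A$; since $n \ge 2r$ two such arcs that are ``parallel'' across $A$ are disjoint, so at most one from each such pair can be chosen, yielding the bound $r$. The second step is a double-counting (averaging) over all $n!$ cyclic orderings: each $r$-set $H$ of an intersecting family $\cal F$ is an arc in exactly $n \cdot r! \cdot (n-r)!$ of the $n!$ orderings, and each ordering contributes at most $r$ arcs from $\cal F$, so
\[
|\cal F| \cdot n \cdot r!\,(n-r)! \;\le\; n! \cdot r,
\]
which rearranges to $|\cal F| \le \frac{r \cdot (n-1)!}{r!\,(n-r)!} = {n-1 \choose r-1}$. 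This completes the bound.

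For the characterization of equality when $n > 2r$, I would show that equality forces $\cal F$ to be a star. The idea: equality in the averaging argument means that for \emph{every} cyclic order, $\cal F$ contains exactly $r$ arcs, and these $r$ arcs must form an extremal configuration in the arc lemma — one checks (using $n > 2r$ strictly) that the only way to pick $r$ pairwise-intersecting arcs out of the $n$ arcs of a cycle is to take $r$ consecutive arcs, i.e.\ all arcs containing a fixed pair of adjacent circle-points, hence all arcs through a single common element. One then propagates this: fix a set $H \in \cal F$ and an element $v \in H$; by considering cyclic orders in which $H$ appears as an arc with $v$ as an ``endpoint,'' one deduces that every member of $\cal F$ contains $v$. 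The main obstacle is this last step — pushing the per-ordering rigidity to a single global common vertex — which requires the strict inequality $n > 2r$ (for $n = 2r$ the extremal families are not all stars, e.g.\ one may take one set from each complementary pair arbitrarily) and a careful but elementary argument threading different cyclic orders together. An alternative to the whole approach is the Kruskal–Katona shadow method: an intersecting family $\cal F$ satisfies $\overline{\cal F} = \{[n]\setminus H : H \in \cal F\}$ has $\partial_r$-type shadow disjoint from $\cal F$, and a shifting/compression argument reduces to the case where $\cal F$ is a ``compressed'' family, for which the bound and equality case are transparent; but the circle method is shorter for a survey.
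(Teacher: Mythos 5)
The paper states this as a cited classical result and supplies no proof, so there is no in-paper argument to compare against; I will assess your proof on its own terms. What you give is Katona's cyclic-permutation proof (Katona, 1972), which is correct and is probably the cleanest proof of the bound --- though note it is not the original Erd\H{o}s--Ko--Rado argument, which used a compression/shifting induction rather than Kruskal--Katona as you suggest. The $n<2r$ case, the lower bound, the ``at most $r$ arcs per cyclic order'' lemma (via pairing each arc $A_{s-k}$ with the disjoint arc $A_{s+r-k}$, disjointness requiring $n\ge 2r$), and the double count $|\mathcal F|\cdot n\cdot r!\,(n-r)! \le n!\cdot r$ are all correct, and your observation that the arc lemma degenerates at $n=2r$ (e.g.\ $\{1,2,3\},\{3,4,5\},\{5,6,1\}$ for $n=6$) is exactly the right thing to point out.

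The one place where I would want more is the uniqueness step, which you yourself flag as the ``main obstacle.'' It is true that for $n>2r$ the only $r$ pairwise-intersecting arcs in a cyclic order are $r$ consecutive ones (any two intersecting arcs have starting indices within circular distance $r-1$, and $r$ indices pairwise within distance $r-1$ must be an interval when $n>2r$), so per ordering the $\mathcal F$-arcs share one point $v_\sigma$. But the passage from ``each ordering has a common point'' to ``all orderings have the \emph{same} common point'' is not immediate, and your sentence about ``considering cyclic orders in which $H$ appears as an arc with $v$ as an endpoint'' is too vague to be checked: the common point of the $\mathcal F$-arcs in such an ordering lies somewhere in $H$ but need not be $v$, and a priori it can move as you permute the vertices outside $H$. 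A workable route is to fix $A,B\in\mathcal F$ with $|A\cap B|=k$ and build an ordering in which $A$ occupies positions $1,\dots,r$ and $B$ occupies $r-k+1,\dots,2r-k$ (possible since $n\ge 2r$); then the block of $r$ consecutive $\mathcal F$-arcs contains both $A=A_1$ and $B=A_{r-k+1}$, forcing its common point to sit in positions $r-k+1,\dots,r$, i.e.\ in $A\cap B$, and now one argues that this point is independent of the choices. Alternatively one can simply invoke Hilton--Milner, or fall back to compression for the equality case. As written, that step is a genuine gap in the write-up, though not in the method.
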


The general case is an old conjecture of Erd\H os.

\begin{conjecture} {\bf (Matching Conjecture, Erd\H os~\cite{E1965})} \label{erdosmatching}
Let $r, t \ge 1$ and $n \ge r(t+1)-1$. Then
$$\ex_r(n, M_{t+1}^+)= \max\left\{
{r(t+1)-1 \choose r}, {n \choose r}- {n-t \choose r}\right\}.$$
\end{conjecture}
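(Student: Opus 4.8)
Since the expansion $M_{t+1}^+$ is, after relabelling its $(r-2)(t+1)$ expansion vertices, nothing but a collection of $t+1$ pairwise disjoint $r$-sets, the quantity $\ex_r(n,M_{t+1}^+)$ is exactly the maximum number of edges in an $r$-graph on $n$ vertices with no matching of size $t+1$, i.e.\ with matching number at most $t$. The plan is to handle the lower and upper bounds separately; I should say at the outset that only the lower bound is routine, and that the matching inequality is the content of a well-known open problem, so what follows for the upper bound is a strategy rather than a finished proof.

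\emph{Lower bound.} I exhibit two $r$-graphs with matching number at most $t$. First, the complete $r$-graph ${[r(t+1)-1]\choose r}$: any matching of size $t+1$ would need $r(t+1)$ vertices, one more than are available, so its matching number is at most $t$; it has ${r(t+1)-1\choose r}$ edges, and the hypothesis $n \ge r(t+1)-1$ guarantees it embeds in $[n]$. Second, fix a set $T$ of $t$ vertices and take $H_T = \{e \in {[n]\choose r} : e \cap T \ne \emptyset\}$, all $r$-sets meeting $T$: since the edges of a matching are pairwise disjoint while each edge of $H_T$ uses a vertex of $T$, no matching has more than $t$ edges, and $|H_T| = {n\choose r}-{n-t\choose r}$. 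The larger of the two gives the stated lower bound.

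\emph{Upper bound strategy.} For the reverse inequality I would use the shifting (compression) method. For $i<j$ let $S_{ij}$ be the operator that replaces each edge $e$ with $j \in e$, $i \notin e$ by $(e\setminus\{j\})\cup\{i\}$ whenever the latter is not already present; a classical lemma shows $|S_{ij}(H)|=|H|$ and that the matching number does not increase. Iterating, one reduces to a \emph{shifted} family $H$ (closed under all $S_{ij}$) with matching number at most $t$, and then studies the minimal edges of $H$ in the componentwise order. The aim is a dichotomy: either $H$ is so concentrated on $[r(t+1)-1]$ that $|H| \le {r(t+1)-1\choose r}$, or the minimal-element analysis forces $H$ to be covered by some $t$-set, whence $|H| \le {n\choose r}-{n-t\choose r}$. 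This is essentially Frankl's approach, and it succeeds once $n$ is large relative to $t$.

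\emph{The main obstacle.} The conjecture is known in several regimes: for $t=1$ it is the Erd\H{o}s--Ko--Rado theorem (Theorem~\ref{ekrthm}); for $r=2$ it is the Erd\H{o}s--Gallai theorem; and, via refinements of the shifting method together with reductions to a fractional relaxation (Frankl; Frankl--Kupavskii; Alon--Frankl--Huang--R\"odl--Ruci\'nski--Sudakov), it holds whenever $n$ exceeds a linear-in-$t$ threshold such as $n \ge (2t+1)r-t$. The genuinely difficult range is the intermediate one, roughly $r(t+1)-1 \le n \lesssim 2rt$, where the two extremal families swap dominance: there the shifting dichotomy above is not clean, control of the minimal edges degrades, and one seems to need either a stability version of the extremal result near the crossover point or a substantially sharper compression/counting argument. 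Closing this gap is precisely what remains open in the conjecture.
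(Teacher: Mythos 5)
Since the statement you were given is a \emph{conjecture}, the paper offers no proof of it: it simply records the Erd\H{o}s Matching Conjecture and then surveys the known partial results, most notably Frankl's theorem (Theorem~\ref{franklmatching} in the paper) establishing the bound $\ex_r(n,M_{t+1}^+) = {n\choose r}-{n-t\choose r}$ for $n \ge (2t+1)r-t$. Your treatment is exactly what one should do with an open conjecture. You correctly observe that $M_{t+1}^+$ is, up to relabelling, just a matching of $t+1$ pairwise disjoint $r$-sets, so the problem is the classical matching-number problem; your two extremal constructions (the complete $r$-graph on $r(t+1)-1$ vertices, and the family of all $r$-sets meeting a fixed $t$-set) are the correct ones and give the claimed lower bound; and you accurately locate the known special cases --- Erd\H{o}s--Ko--Rado at $t=1$, Erd\H{o}s--Gallai at $r=2$, and the linear-in-$t$ thresholds culminating in Frankl's bound, which is precisely the result the paper cites. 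Your description of the remaining difficulty --- the intermediate range $r(t+1)-1 \le n \lesssim 2rt$ where the two extremal families cross over and the shifting dichotomy degrades --- correctly identifies the open content. The compression sketch you outline is indeed the standard attack and the one underlying the partial results; you are right that it does not, as stated, close the gap, and you were right not to pretend otherwise. Nothing further can reasonably be asked of a ``proof'' of an open conjecture.
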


For $r=1$, the conjecture holds trivially and for $r=2$ it was proved by Erd\H os and Gallai~\cite{EG}. For $n>n_0(r,t)$ it was proved by Erd\H os~\cite{E1965}. Bollob\'as, Daykin and Erd\H os~\cite{BDE} lowered the value of $n_0(r,t)$ to $2tr^3$. More recently, Huang, Loh, and Sudakov~\cite{HLS} lowered it further to $3tr^2$, which was further improved slightly in~\cite{FLM} (see also ~\cite{LM}).  Frankl and F\"uredi (unpublished) gave an improvement for large $r$ by showing that $n_0(r,t)< c r t^2$ for some positive $c$.  Finally, Frankl~\cite{Frankl2013} gave the following improvement that contains all of the previous results on this problem.   Let
$${\cal A}(n,r,t):=\left\{e \in {[n] \choose r}: e \cap [t] \ne \emptyset\right\}.$$

\begin{theorem} {\bf (\cite{Frankl2013})}\label{franklmatching}
Let $r, t \ge 1$ and $n \ge (2t+1)r-t$. Then
$$\ex_r(n, M_{t+1}^+)= {n \choose r}- {n-t \choose r}.$$
Equality holds only for families isomorphic to ${\cal A}(n,r,t)$.
\end{theorem}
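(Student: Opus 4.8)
The plan is to combine Frankl's shifting method with induction on $t$, following the familiar pattern for Erd\H os--Ko--Rado type problems. Throughout, $\nu(H)$ denotes the matching number of $H$; since $M_{t+1}^+$ is simply $t+1$ pairwise disjoint $r$-edges, being $M_{t+1}^+$-free is exactly the condition $\nu(H)\le t$.

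First I would reduce to shifted families. As the shift $S_{ij}$ ($i<j$) neither raises the matching number nor changes the number of edges, after iterating all shifts to stability we may assume $H\subseteq\binom{[n]}{r}$ is left-compressed with $\nu(H)\le t$; it then suffices to prove $|H|\le\binom nr-\binom{n-t}r$ together with the statement that the only left-compressed $H$ attaining equality is $\mathcal{A}(n,r,t)$ itself (the uniqueness assertion for arbitrary extremal families then follows in the standard way, by checking that each applicable shift on an extremal $H$ is invertible --- hence an isomorphism --- and descending on $\sum_{e\in H}\sum_{x\in e}x$). I would also record one standard shifting lemma: if $H$ is left-compressed with $\nu(H)=s$ then $H$ contains $s$ pairwise disjoint edges all lying within $[rs]$; indeed, taking among all matchings of size $s$ one of minimum total weight $\sum_i\sum_{x\in e_i}x$, any vertex exceeding $rs$ could be shifted down into an unused slot of $[rs]$, contradicting minimality.

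Now induct on $t$. The base case $t=1$ asserts that an intersecting $r$-graph has at most $\binom{n-1}{r-1}=\binom nr-\binom{n-1}r$ edges with equality only for a star, which is the Erd\H os--Ko--Rado Theorem~\ref{ekrthm} (applicable as $n\ge3r-1\ge2r$). For $t\ge2$ split $H$ at the vertex $1$: put $\bar H_1=\{e\in H:1\notin e\}$ and let $H_1=\{e\setminus\{1\}:1\in e\in H\}$ be the link of $1$, so $|H|=|H_1|+|\bar H_1|$; here $\bar H_1$ is again left-compressed with $\nu(\bar H_1)\le t$, and a short computation in the domination order shows $\partial_{r-1}(\bar H_1)\subseteq H_1$. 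If $\nu(\bar H_1)\le t-1$, apply the inductive hypothesis for $t-1$ to $\bar H_1$ on $n-1$ vertices --- legitimate because $n-1\ge(2t+1)r-t-1\ge(2(t-1)+1)r-(t-1)$, with slack $2(r-1)$ --- to get $|\bar H_1|\le\binom{n-1}r-\binom{n-t}r$; since $|H_1|\le\binom{n-1}{r-1}$ trivially, Pascal's identity gives $|H|\le\binom{n-1}{r-1}+\binom{n-1}r-\binom{n-t}r=\binom nr-\binom{n-t}r$. Moreover equality forces $H_1$ to be the complete $(r-1)$-graph on $\{2,\dots,n\}$ and, by the inductive uniqueness, $\bar H_1$ to be the left-compressed copy of $\mathcal{A}(n-1,r,t-1)$; combining these says precisely that $H=\mathcal{A}(n,r,t)$.

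The whole difficulty lies in the remaining case $\nu(\bar H_1)=t$, where necessarily $\nu(H)=t$ as well. By the shifting lemma $\bar H_1$ contains $t$ pairwise disjoint edges $C_1,\dots,C_t\subseteq\{2,\dots,rt+1\}$, and these lie in $H$; since $\nu(H)=t$, any edge of $H$ disjoint from $C_1\cup\cdots\cup C_t$ would create a matching of size $t+1$, so in particular every edge of $H$ through $1$ meets $\{2,\dots,rt+1\}$, whence $|H_1|\le\binom{n-1}{r-1}-\binom{n-1-rt}{r-1}$. Because $\partial_{r-1}(\bar H_1)\subseteq H_1$, the $(r-1)$-shadow of $\bar H_1$ is this small, so Kruskal--Katona forces $|\bar H_1|$, and hence $|H|=|H_1|+|\bar H_1|$, to lie strictly below $|\mathcal{A}(n,r,t)|$, so no extremal example occurs here. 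The hard point is to make this quantitative estimate sharp enough to beat $|\mathcal{A}(n,r,t)|$ \emph{exactly} throughout the range $n\ge(2t+1)r-t$, not merely asymptotically; that is where the slack $2(r-1)$ built into the hypothesis --- equivalently, the precise constant $(2t+1)r-t$ --- is spent.
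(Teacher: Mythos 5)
The survey does not prove Theorem~\ref{franklmatching}; it cites it from Frankl's 2013 paper, so there is no in-paper proof to compare against. Evaluated on its own merits, your framework --- reduction to a left-compressed family, induction on $t$ with base case the Erd\H{o}s--Ko--Rado theorem, splitting at vertex~$1$, the shadow containment $\partial_{r-1}\bar H_1\subseteq H_1$, and the sub-case $\nu(\bar H_1)\le t-1$ closed by Pascal's identity --- is sound and well-organized, and the arithmetic $n-1\ge (2(t-1)+1)r-(t-1)$ does check out.

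However, the sub-case $\nu(\bar H_1)=t$, which you correctly identify as the heart of the matter, is not merely left as a hard computation: the mechanism you propose fails outright at the boundary of the stated range. Take $r=3$, $t=2$, $n=(2t+1)r-t=13$, so the target is $|\mathcal{A}(13,3,2)|=\binom{13}{3}-\binom{11}{3}=121$. Your bound gives $|H_1|\le\binom{12}{2}-\binom{6}{2}=51$, and Kruskal--Katona applied to $|\partial_2\bar H_1|\le 51=\binom{10}{2}+\binom{6}{1}$ yields only $|\bar H_1|\le\binom{10}{3}+\binom{6}{2}=135$; the method therefore gives $|H|\le 186$, far above $121$. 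The problem is structural, not one of sharpening constants: the constraint you extract --- every edge of $H$ meets $\{2,\dots,rt+1\}$ --- uses only a single matching of $\bar H_1$ rather than all of them, and the pass through Kruskal--Katona discards the real tension between $|H_1|$ and $|\bar H_1|$ (indeed the Kruskal--Katona extremizer for $|\partial_2 G|\le 51$ is a colex initial segment of $135$ triples with matching number $3>t$, so it is not even an admissible $\bar H_1$). Frankl's argument does not pass through Kruskal--Katona; roughly, it rests on a sharper lemma for left-compressed families with $\nu(F)\le t$: every $\{a_1<\cdots<a_r\}\in F$ has some index $i$ with $a_i<(t+1)i$, proved by noting that otherwise the $t+1$ pairwise disjoint sets $\{j,\,(t+1)+j,\,\dots,\,(r-1)(t+1)+j\}$, $j\in[t+1]$, all lie in the left-closure of the edge and hence in $F$, giving a matching of size $t+1$; one then counts $|F|$ directly by partitioning according to the smallest such index $i$. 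To complete your proof you would need to replace the Kruskal--Katona step with something of comparable strength.
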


\subsection{Stars}
For $t \ge 2$, let $S_t=K_{1,t}$ be the star with $t$ edges. The problem of determining $\ex_r(n, S_t^+)$ was first considered in 1976 by Erd\H os and S\'os~\cite{S}. They determined  $\ex_3(n, S_2^+)$ precisely for all $n$.

\begin{theorem} {\bf (\cite{S})} \label{erdossos}
$$\ex_3(n, S_2^+) = \begin{cases}
n & \hbox{ if $n  \equiv 0$ (mod 4)} \\
n-1 & \hbox{ if $n  \equiv 1$ (mod 4)} \\
n-2 & \hbox{ $if n  \equiv 2,3$ (mod 4).} \\
\end{cases}
$$
\end{theorem}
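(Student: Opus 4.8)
The graph $S_2=K_{1,2}$ has two edges sharing a common vertex, so its $3$-expansion $S_2^+$ consists of two triples meeting in exactly one point. Hence a triple system $H$ is $S_2^+$-free precisely when any two of its edges meet in $0$ or $2$ vertices. I would first recast this via links: for each vertex $v$ the link $N_H(v)$, a graph on $V(H)\setminus\{v\}$, must be \emph{intersecting}, since two edges $\{v,a,b\},\{v,c,d\}\in H$ with $\{a,b\}\cap\{c,d\}=\emptyset$ form a copy of $S_2^+$. An intersecting graph is a subgraph of a star or of a triangle, and this elementary dichotomy drives the whole argument.

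The heart of the proof is a structure theorem: every connected component $C$ of $H$ (where two edges are adjacent iff they meet in two vertices) is either \emph{(a)} a \emph{book}, i.e.\ the set of all triples of $H$ containing some fixed pair $\{a,b\}$ (a single edge being the degenerate case), or \emph{(b)} a subhypergraph of the complete $3$-graph $K_4^{(3)}$ on some $4$-element set. To prove it I would first record a lemma: if a vertex $v$ has link exactly a triangle on $\{p,q,r\}$, so $vpq,vpr,vqr\in H$, then every edge of $H$ meeting $\{v,p,q,r\}$ lies inside it --- this follows by checking that an edge $f$ which meets each of $vpq,vpr,vqr$ in $0$ or $2$ vertices is forced into $\{v,p,q,r\}$ --- and hence the component of $v$ equals $H[\{v,p,q,r\}]\subseteq K_4^{(3)}$. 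Now take any component $C$ with at least two edges; pick $abc\in C$ with an adjacent edge which, after relabeling, is $abd\in C$. The link of $a$ contains the pairs $bc$ and $bd$; if it is a triangle then $acd\in H$ and the lemma puts $C$ in case (b), and otherwise the link is a star, necessarily centred at $b$, so every edge of $H$ through $a$ contains $\{a,b\}$. Arguing symmetrically at $b$ (again either landing in case (b) or concluding that every edge through $b$ contains $\{a,b\}$), a short walk-along-$C$ induction then forces every edge of $C$ to contain $\{a,b\}$, so $C$ is a book. \textbf{I expect this structural analysis to be the main obstacle}: the individual steps are elementary but must be organized carefully so as not to overlook a configuration.

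Finally I would optimize over such $H$. Set $W:=n-|H|$, the number of ``wasted'' vertices, obtained by assigning weight $1$ to each isolated vertex and weight $v-m$ to each component on $v$ vertices with $m$ edges. From the structure theorem a book on $v$ vertices has $v-2$ edges, so weight $2$, while a component that is a subhypergraph of $K_4^{(3)}$ lives on at most $4$ vertices: it has weight $0$ exactly when it is $K_4^{(3)}$ itself, weight $1$ exactly when it is $K_4^{(3)}$ with one triple removed, and weight $\ge 2$ otherwise. Consequently $W\ge 0$ always; $W=0$ forces $H$ to be a vertex-disjoint union of copies of $K_4^{(3)}$, whence $4\mid n$; and $W=1$ forces $H$ to be such a union together with either one isolated vertex or one ``$K_4^{(3)}$ minus a triple'', whence $n\equiv 0$ or $1\pmod 4$. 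Therefore $\ex_3(n,S_2^+)\le n$ when $n\equiv 0$, $\le n-1$ when $n\equiv 1$, and $\le n-2$ when $n\equiv 2,3\pmod 4$. For the matching lower bounds, take $\lfloor n/4\rfloor$ vertex-disjoint copies of $K_4^{(3)}$, leaving $n\bmod 4$ isolated vertices, when $n\equiv 0,1,2\pmod 4$, and take $(n-3)/4$ copies of $K_4^{(3)}$ together with one triple on the three remaining vertices when $n\equiv 3\pmod 4$; these are $S_2^+$-free and have $n$, $n-1$, $n-2$, $n-2$ edges respectively, matching the bounds. Tracking the equality cases above also recovers the characterization of the extremal configurations, which is not asserted in the statement.
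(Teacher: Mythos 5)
The paper states this theorem with only a citation to Erd\H{o}s and S\'os and supplies no proof, so there is nothing in the paper itself to compare against. Your argument is correct and self-contained, and it follows what is essentially the classical route. I checked the key technical points: (i) two edges of $H$ in different components must be \emph{disjoint} (a one-vertex intersection would itself be an $S_2^+$), so the component decomposition together with isolated vertices really does partition $V(H)$ and $W=n-|H|$ is exactly the sum of component weights; (ii) the lemma that a triangle link at $v$ on $\{p,q,r\}$ traps the entire component inside $\{v,p,q,r\}$ is correct --- if $v\in f$ then $f$ must pick up two of $p,q,r$ to avoid a size-one intersection with some $vpq,vpr,vqr$, and if $v\notin f$ then intersecting two of them in size two forces $f=pqr$; (iii) an intersecting family of pairs containing $bc,bd$ is either a star at $b$ or precisely the triangle $bcd$, which is what the dichotomy at $a$ (and symmetrically at $b$) uses; (iv) the walk-along-the-component step works because any edge meeting an edge $abx$ in two vertices must contain $a$ or $b$, hence both by the star conclusions at $a$ and $b$. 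The weight accounting (weight $0$ only for $K_4^{(3)}$, weight $1$ only for $K_4^{(3)}$ minus one triple, weight $\ge 2$ otherwise and for isolated vertices and books) then gives the upper bounds by residue class, and your constructions match them. This also recovers, as you note, the characterization of extremal configurations mentioned informally after the theorem in the survey.
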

The extremal families above are basically the disjoint unions of triple systems consisting of all 3-subsets of a 4-subset (in the last case there are some more  possibilities, namely to take some 3-sets containing two fixed points). The
case $r \ge 4$ is more complicated.
Erd\H os and S\'os conjectured that
\begin{equation} \label{esf} \ex_r(n, S_2^+)={n-2 \choose r-2}\qquad \hbox{ for} \quad  n>n_0(r)
\end{equation}
and
this was proved soon after by Frankl~\cite{Frankl1977} (the lower bound is trivial for all $n \ge r\ge 4$). Determining the correct value of $n_0(r)$ above seems very difficult.  The following bound
proved by Keevash-Mubayi-Wilson~\cite{KMW} is slightly sub-optimal,  but holds for all $n\ge r\ge 4$:
$$\ex_r(n, S_2^+) \leq {n \choose r-2}.$$

 For fixed $r,t\ge 2$, Duke and Erd\H os~\cite{DE} showed that $\ex_r(n, S_t^+)=\Theta(n^{r-2})$ and conjectured that
 $\ex_3(n, S_3^+)=6(n-3)+2$ (presumably for $n \ge 7$ though this is not explicitly mentioned).  Frankl~\cite{Frankl1978}   proved that
 $$\ex_3(n, S_3^+)=6(n-3)+2\qquad \hbox{  for} \quad n \ge 54$$
and $\ex_3(n, S_t^+)< (5/3)t(t-1)n$. Chung~\cite{C}
substantially improved these bounds  and finally, Frankl and Chung~\cite{FC} improved them further as follows.

\begin{theorem} {\bf (\cite{FC})} \label{franklchung}
$$\ex_3(n, S_t^+) = \begin{cases}
nt(t-1)+2{t \choose 3} & \hbox{ if $n>t(t-1)(5t+2)/2$ and $t\ge 3$ is odd} \\
\frac{nt(2t-3)}{2}-\frac{2t^3-9t+6}{2} & \hbox{ if
$n\ge 2t^3-9t+7$ and $t \ge 4$ is even.} \\
\end{cases}
$$
\end{theorem}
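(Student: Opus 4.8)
The plan is to translate the problem into one about link graphs, reduce it to a finite optimisation, and then upgrade an asymptotic count to the exact one by stability.

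First I would record the basic dictionary. For a vertex $v$ of a $3$-graph $H$ let $L_v$ be its link graph; then $H$ contains a copy of $S_t^+$ if and only if some $L_v$ contains a matching of size $t$ (a matching $\{a_1b_1,\dots,a_tb_t\}$ in $L_v$ yields triples $\{v,a_i,b_i\}\in H$, which is exactly a copy of $S_t^+$ centred at $v$, and conversely every copy of $S_t^+$ arises this way). So $\ex_3(n,S_t^+)$ is the maximum of $\tfrac13\sum_v|L_v|$ over all $H$ with $\nu(L_v)\le t-1$ for every $v$. Note that a copy of $S_t^+$ is precisely a sunflower with $t$ petals and a core of size one, so the $\Delta$-system philosophy of Section 4 is present, though not in that direct form since here one must forbid a single core size only.

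Next I would show that only a bounded number of vertices can have a large link, which is what forces the answer to be linear rather than quadratic in $n$ despite the weak per-link bound $|L_v|\le(t-1)n+O_t(1)$ coming from the Erd\H os--Gallai theorem. Two elementary facts drive this: a graph with matching number $\le s$ has at most $s$ vertices of degree exceeding $2s$ (extend a matching greedily through any $s+1$ such vertices using their private neighbours); and by Erd\H os--Gallai a link with more than $C(t)$ edges must have the extremal ``hub'' shape, a set $W_v$ of at most $t-1$ vertices carrying all but $O_t(1)$ of its edges. Calling $v$ heavy if $\deg_H(v)>C(t)$, every hub of a heavy vertex is heavy, and the mutual-hub graph $\Gamma$ on the heavy vertices has maximum degree $\le t-1$ (first fact) and matching number $\le t-1$ (because $\Gamma$ embeds into the link of a typical light vertex); hence $\Gamma$, and with it the set of heavy vertices together with all triples spanned by two of them, has size bounded in terms of $t$ alone.

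With the heavy part confined to a bounded ``spine'', every $S_t^+$-free $H$ is then described, up to $O_t(1)$ edges, by that spine together with, for each light vertex $y$, a link $L_y$ that is a bounded-degree, matching-number-$\le(t-1)$ graph compatible with the spine, so that $|H|=\tfrac13\big(\sum_{\text{heavy}}|L_v|+\sum_{\text{light }y}|L_y|\big)+O_t(1)$. Maximising the right-hand side becomes a finite optimisation over spine configurations: one balances the hub contribution of the heavy vertices (about $(t-1)n$ apiece) against the maximum number of edges the light links can carry while remaining simultaneously compatible with the spine and with each other. The odd/even dichotomy in the statement arises exactly here: the relevant local extremal quantity is governed by how many factor-critical components fit into the Gallai--Edmonds decomposition of such a graph, and the controlling floor function behaves differently for $t$ odd and $t$ even — this yields the two leading coefficients $t(t-1)$ and $\tfrac{t(2t-3)}{2}$, with the lower-order terms $2\binom t3$ and $-\tfrac{2t^3-9t+6}{2}$ counting the triples internal to the optimal spine, and with the thresholds $n>t(t-1)(5t+2)/2$ and $n\ge 2t^3-9t+7$ marking the point beyond which the optimal configuration overtakes (and then stays ahead of) its finitely many competitors. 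The exact count, rather than just the asymptotics, then follows in the usual way: a stability statement forces a near-optimal $H$ to have a spine isomorphic to the extremal one, after which a direct local clean-up determines every light link and counts the internal triples exactly, also giving uniqueness of the extremal family; the lower bound is the matching construction, checked link-by-link.

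The hard part is the finite optimisation in the third step: pinning down the exact extremal spine — its order, its internal triples, and the exact value of the local light-link quantity — and proving it beats every competitor. This is where the parity split genuinely lives and where the two explicit polynomials and the ranges of validity come from; the remaining ingredients (the link reduction, the Erd\H os--Gallai and Gallai--Edmonds input, the boundedness of the heavy part, and the stability-to-exact upgrade) are comparatively routine once the optimisation is in hand.
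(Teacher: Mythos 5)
This survey does not contain a proof of Theorem~\ref{franklchung}; the result is stated with a citation to Chung and Frankl~\cite{FC}, and none of the techniques developed in Sections~4--5 are applied to it, so there is no in-paper argument against which to compare your attempt.

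On its own merits, your proposal is a plan rather than a proof. The link dictionary (that $H$ is $S_t^+$-free precisely when every link graph has matching number at most $t-1$) is correct and is the natural starting point, and the architecture you outline --- a heavy/light dichotomy driven by Erd\H{o}s--Gallai, a bounded ``spine,'' a finite optimisation over spine configurations, and a stability-to-exact upgrade --- is the plausible route to the theorem. But the step that actually produces the theorem is the finite optimisation identifying the extremal spine together with the compatible light links, and that is precisely the step you defer as ``the hard part.'' Without it you obtain neither the leading coefficients $t(t-1)$ and $t(2t-3)/2$, nor the constant terms, nor the odd/even parity split, nor the thresholds on $n$, nor the extremal configuration --- in other words, none of the content of the statement. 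Several intermediate claims would also need to be argued rather than asserted: in particular, that the mutual-hub graph $\Gamma$ embeds into the link of a light vertex (this requires observing that $w$ being in the hub of $L_v$ means $\{v,w,u\}\in H$ for many $u$, and then running this for all hub pairs simultaneously), and that this together with the degree bound actually confines the heavy vertices to a bounded set. Lastly, independently of your argument, be aware that the odd-$t$ formula as printed in the survey, $nt(t-1)+2\binom{t}{3}$, is inconsistent with Frankl's $\ex_3(n,S_3^+)=6(n-3)+2$ quoted just above it: at $t=3$ the former gives $6n+2$ while the latter gives $6n-16$. There is evidently a misprint (presumably $(n-t)t(t-1)+2\binom{t}{3}$ or similar was intended), so any construction you propose should be checked against the formula in the original paper of Chung and Frankl rather than against the displayed expression here.
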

Frankl and Chung characterized the extremal example as well.  The only other case where exact results are known for all $n$ is $r=4$ and $t=2$.
Before stating this result
we should recall that full version of the
 Erd\H{o}s-Ko-Rado \cite{EKR} theorem. Say
that an $r$-graph $H$ is {\em $t$-intersecting}
if for every $e,f \in H$ we have $|e \cap f| \geq t$.
They showed that, if $H$ is a $t$-intersecting $r$-graph on $[n]$
with $n$ sufficiently large, then
$|H| \leq \binom{n-t}{r-t}$. (The case $t=2$ is
pertinent to our current discussion.) Confirming a
conjecture of Erd\H{o}s, Wilson \cite{W} showed that this
bound in fact holds for $n \geq (t+1)(r-t+1)$ (which is
the best possible strengthening), and furthermore that the
unique maximum system consists of all $r$-sets containing
some fixed $t$-set. To describe the complete solution
for all $n$ we need to define the $t$-intersecting
systems
$$\mathcal{F}(n,r,t,i) = \left\{e \in {[n] \choose r} : |e \cap [t+2i]| \geq t + i \right\}$$ for $0 \leq i \leq r-t$.
The complete intersection theorem, conjectured by Frankl, and
proved by Ahlswede and Khachatrian \cite{AK}, is
that a maximum size  $t$-intersecting $r$-graph
on $[n]$ is isomorphic to $\mathcal{F}(n,r,t,i)$
for some $i$ which can easily be computed given $n$. Note
that $\mathcal{F}(n,r,t,0)$ is isomorphic to the $r$-graph of all $r$-sets containing
some fixed $t$-set.

\begin{theorem} {\bf (\cite{KMW})}\label{kmw}
$$\ex_4(n, S_2^+) \leq \left\{ \begin{array}{cc}
\binom{n}{4} & n=4,5,6 \\ 15 & n=7 \\ 17 & n=8 \\ \binom{n-2}{2}
& n \geq 9 \end{array} \right.$$
Furthermore, the only cases of equality are ${[n] \choose 4}$ for
$n=4,5$, $\mathcal{F}(n,4,2,2) = {[6] \choose 4}$ for $n=6,7$,
$\ \mathcal{F}(8,4,2,1)$ for $n=8$ and
$\mathcal{F}(n,4,2,0)$ for $n \geq 9$.
\end{theorem}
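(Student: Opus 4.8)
Since $S_2^+$ is obtained from the path $K_{1,2}$ by enlarging its two edges with two disjoint pairs of new vertices, $S_2^+$ is exactly the $4$-graph consisting of two $4$-sets meeting in precisely one vertex; thus a $4$-graph $H$ is $S_2^+$-free if and only if every two of its edges meet in $0$ or at least $2$ vertices. The plan is to split on whether $H$ has two disjoint edges.

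\textbf{Case A: $H$ is $2$-intersecting,} i.e.\ every two edges share at least $2$ vertices. Then $H$ is governed by the complete intersection theorem quoted above: $|H|\le\max_{0\le i\le 2}|\mathcal{F}(n,4,2,i)|$, with equality only for a maximizing $\mathcal{F}(n,4,2,i)$ --- and for $n\ge 9=(t+1)(r-t+1)$ with $t=2,r=4$ this (including uniqueness) already follows from Wilson's form of Erd\H os--Ko--Rado. Evaluating $|\mathcal{F}(n,4,2,i)|$ for $i=0,1,2$ gives: the maximum is $\binom n4$ for $n\le 5$; it is $15=|\mathcal{F}(n,4,2,2)|$ with $\mathcal{F}(n,4,2,2)=\binom{[6]}{4}$ for $n\in\{6,7\}$; it is $17=|\mathcal{F}(8,4,2,1)|$ for $n=8$; and it is $\binom{n-2}{2}=|\mathcal{F}(n,4,2,0)|$ for $n\ge 9$. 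This is exactly the stated conclusion. Moreover two disjoint $4$-sets use $8$ vertices, so for $n\le 7$ every $S_2^+$-free $4$-graph is automatically $2$-intersecting, and the theorem is proved in that range.

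\textbf{Case B: $H$ contains disjoint edges $e$ and $f$} (so $n\ge 8$). Let $W=[n]\setminus(e\cup f)$, $|W|=n-8$. By $S_2^+$-freeness every edge $g\notin\{e,f\}$ has $|g\cap e|,|g\cap f|\in\{0,2,3,4\}$, and since $|g|=4$ this leaves only: one of $g\cap e$, $g\cap f$ a pair or a triple and the other empty (the \emph{$e$-edges} and \emph{$f$-edges}); both a pair (the \emph{mixed edges}); or both empty. The last type forms an $S_2^+$-free $4$-graph $H_0$ on $W$. For $P\in\binom{e}{2}$ put $\mathcal{B}_P=\{B\subseteq W:P\cup B\in H\}\subseteq\binom{W}{2}$, and likewise for triples of $e$, for $f$, and bound the at most $\binom 42^2=36$ mixed edges crudely. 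The constraints one extracts from $S_2^+$-freeness are: (i) for distinct pairs $P,P'\subseteq e$, if $P\cap P'\ne\emptyset$ then $\mathcal{B}_P,\mathcal{B}_{P'}$ are cross-intersecting, while if $P\cap P'=\emptyset$ no $B\in\mathcal{B}_P$, $B'\in\mathcal{B}_{P'}$ meet in exactly one vertex; (ii) every edge of $H_0$ meets every member of every $\mathcal{B}_P$ (and of the $f$-side families) in $0$ or $2$ vertices, and avoids the tip vertices of the triple-edges; (iii) the mixed edges interact with $e$, $f$ and one another only through a bounded $8$-vertex configuration. Plugging (i)--(iii) into the cross-intersecting estimate $|\mathcal{A}|+|\mathcal{B}|\le 2(|W|-1)$ for nonempty cross-intersecting $\mathcal{A},\mathcal{B}\subseteq\binom{W}{2}$ produces the crucial tradeoff: some $\mathcal{B}_P$ attains its trivial maximum $\binom{|W|}{2}$ only when it equals all of $\binom W2$, which by (i)--(ii) forces every other piece --- including $H_0$ --- to collapse; and conversely a large $H_0$ forces each $\mathcal{B}_P$ to be only linear in $|W|$. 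In every regime this yields $|H|=|H_0|+\#(\text{cross edges})\le\binom{n-2}{2}$, strictly once $n\ge 9$. The cases $n\le 8$ (for $n=8$, $W=\emptyset$ and only $e$, $f$, and mixed edges survive) reduce to finite checks.

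\textbf{The main obstacle} is the quantitative tradeoff in Case B: organizing the mutual interaction of the finitely many pair- and triple-families on $e\cup f$, and their interaction with $H_0$, tightly enough that the clean bound $\binom{n-2}{2}$ holds from $n=9$ onward, is attained there only by $\mathcal{F}(n,4,2,0)$, and fails for $n=6,7,8$ by exactly the displayed amounts. This also requires checking a few small $n$ directly and confirming that equality never occurs in Case B, so that the uniqueness statement is preserved. Given the intersection theorems of the previous paragraph, Case A and the set-up of Case B are routine.
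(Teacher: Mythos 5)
The survey states this result as a citation to \cite{KMW} and gives no proof of its own, so there is nothing in the paper to compare your argument against; I will assess the proposal on its merits. Your reduction is correct: $S_2^+$ is the $4$-graph of two $4$-sets meeting in one vertex, $S_2^+$-freeness is exactly the condition that every two edges meet in $0$ or $\geq 2$ points, and two disjoint $4$-sets require $8$ vertices, so for $n\le 7$ every $S_2^+$-free family is $2$-intersecting and the Ahlswede--Khachatrian theorem finishes those cases cleanly. For $n\ge 9$ the $2$-intersecting case is handled by Wilson, though note a small wrinkle: at $n=9=(t+1)(r-t+1)$ one has $|\mathcal{F}(9,4,2,0)|=|\mathcal{F}(9,4,2,1)|=21$, so Wilson gives the bound but not uniqueness at exactly $n=9$; the uniqueness claim there needs a separate word.

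The genuine gap is Case B, and you say so yourself. Everything after ``Plugging (i)--(iii) into the cross-intersecting estimate\dots'' is a plan, not a proof: the tradeoff between the six pair-families $\mathcal{B}_P$ on each of $e$ and $f$, the eight triple-families, the mixed edges, and the residual family $H_0$ on $W$ is never actually carried out, and it is precisely this bookkeeping that must produce the strict bound $|H|<\binom{n-2}{2}$ for every $n\ge 9$ (and the finite values for $n=8$). The cross-intersecting bound $|\mathcal{A}|+|\mathcal{B}|\le 2(|W|-1)$ you cite is correct for nonempty cross-intersecting families of $2$-sets, but it applies only to pairs $P,P'$ sharing a vertex; for disjoint pairs the constraint is the different ``no singleton intersection between $B$ and $B'$'' condition, and the interaction of all $\binom{4}{2}=6$ pair-families under this mixed set of constraints, together with the nested problem on $H_0$, is exactly where the work lies. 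In particular, for the intermediate range $9\le n\le 13$ the ground set $W$ has at most $5$ vertices, the cross-intersecting estimates are weak or vacuous, and one cannot wave at ``every regime'': these cases need either a direct finite verification or a uniform argument that you have not supplied. Until Case B is actually closed (with strict inequality for $n\ge 9$, so that uniqueness survives, and with the exact values $15$ and $17$ recovered at $n=7,8$), this is an outline with the hard half missing rather than a proof.
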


The following stability version of the above result was also shown in~\cite{KMW}

\begin{theorem} \label{kmwstability}
For any $\epsilon>0$ there is $\delta>0$ such that if
$H$ is an $r$-graph on
$[n]$ with no singleton intersection and
$|H| \geq (1-\delta)\binom{n-2}{r-2}$, then
there are two points $x,y$ so that all but at most
$\epsilon n^{r-2}$ sets of $H$ contain both $x$ and $y$.
\end{theorem}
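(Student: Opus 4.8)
I will assume $r\ge 4$; the statement fails for $r=3$, since a disjoint union of $\lfloor n/4\rfloor$ copies of $K_4^{(3)}$ is $S_2^+$-free with $n$ edges yet has no pair of vertices lying in more than two of them. The plan is to first exhibit a single pair $\{x,y\}$ of vertices that lies in a \emph{positive fraction} of the edges of $H$, and then to bootstrap this up to ``all but $\epsilon n^{r-2}$ edges'' using only $S_2^+$-freeness together with Frankl's exact bound $\ex_r(m,S_2^+)=\binom{m-2}{r-2}$ applied on a smaller vertex set.

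For the first step I would use the elementary fact that for every vertex $v$ the link $N_H(v)$ is an \emph{intersecting} $(r-1)$-graph — if $e,f\in H$ both contain $v$ then $|e\cap f|\neq 1$ forces $e\setminus v$ and $f\setminus v$ to meet — together with the bound that an intersecting $(r-1)$-graph has at most $(r-1)$ times its maximum vertex-degree many edges (fix one edge; every other edge meets it). Thus a vertex of degree a positive-constant fraction of $\binom{n-2}{r-2}$ immediately produces a pair of the same codegree order, and the only remaining possibility, that all vertex-degrees (hence all pair-codegrees) are a vanishing fraction of $\binom{n-2}{r-2}$, is excluded by the Delta-system method of Section~4: an $S_2^+$-free $r$-graph with $\Omega(n^{r-2})$ edges contains a homogeneous subfamily of the same size whose kernel has size at most $2$ (from the edge count), cannot have size $0$ (that would make it a matching) and cannot have size $1$ (that would produce two edges meeting in exactly one vertex), hence is a pair lying in $\Omega(n^{r-2})$ edges. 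Either way I obtain a pair $\{x,y\}$ with $d_H(\{x,y\})\ge \gamma\binom{n-2}{r-2}$ for a constant $\gamma=\gamma(r)>0$.

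For the second step, call an edge \emph{bad} if it misses at least one of $x,y$. I first note that no bad edge can contain exactly one of them: if $x\in E$, $y\notin E$ with $E\in H$ bad, then every $\{x,y\}\cup a\in H$ satisfies $|E\cap(\{x,y\}\cup a)|=1+|(E\setminus x)\cap a|$, so $S_2^+$-freeness forces $a$ to meet the fixed $(r-1)$-set $E\setminus x$, and since only $O(n^{r-3})$ of the $(r-2)$-subsets of $V(H)\setminus\{x,y\}$ do so this contradicts $d_H(\{x,y\})=\Omega(n^{r-2})$. So the bad edges form an $S_2^+$-free $r$-graph $B$ on $W:=V(H)\setminus\{x,y\}$; set $V_0=\bigcup_{E\in B}E$, $b=|V_0|$, and ${\cal A}=\{a\subseteq W:\{x,y\}\cup a\in H\}$, so $|H|=|{\cal A}|+|B|$ and $|{\cal A}|=d_H(\{x,y\})$. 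The same argument shows that each $v\in V_0$ has degree only $O(n^{r-4})$ in ${\cal A}$ (a set $a\ni v$ in ${\cal A}$ must meet $E\setminus v$ for a bad edge $E\ni v$), so summing degrees in ${\cal A}$ over $W$ and using that a vertex outside $V_0$ has ${\cal A}$-degree at most $\binom{n-3}{r-3}$ gives
\[ d_H(\{x,y\})\ \le\ \Bigl(1-\tfrac{b}{n}\Bigr)\binom{n-2}{r-2}+O(n^{r-3}), \]
while $|B|\le \ex_r(b,S_2^+)\le \binom{b}{r-2}+O_r(1)$ since $B$ lives on the $b$-set $V_0$. With $\beta=b/n$ this yields $|H|\le\bigl((1-\beta)+\beta^{r-2}\bigr)\binom{n-2}{r-2}+o(n^{r-2})$. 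Since $\beta\mapsto(1-\beta)+\beta^{r-2}$ is convex but not affine on $[0,1]$ (here $r\ge4$ is used) and equals $1$ at both ends, it is bounded away from $1$ on $[\epsilon'',1-\epsilon'']$; choosing $\epsilon''=\epsilon''(\epsilon,r)$ so small that $\binom{\epsilon''n}{r-2}\le\tfrac12\epsilon n^{r-2}$ and $\epsilon''<\gamma/2$, the range $\beta\in[\epsilon'',1-\epsilon'']$ would force $|H|\le(1-\eta)\binom{n-2}{r-2}$ (contradicting the hypothesis once $\delta<\eta$), and $\beta>1-\epsilon''$ would force $d_H(\{x,y\})<2\epsilon''\binom{n-2}{r-2}<\gamma\binom{n-2}{r-2}$ (contradicting the choice of $\{x,y\}$); so $\beta<\epsilon''$, and then the number of edges of $H$ not containing both $x$ and $y$ is $|B|\le\binom{\epsilon''n}{r-2}+O_r(1)\le\epsilon n^{r-2}$ for $n$ large. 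Taking $\delta=\delta(\epsilon,r)$ small finishes it.

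The main obstacle, I expect, is the first step: upgrading ``$H$ has nearly the extremal number of edges'' to ``some fixed pair lies in a positive proportion of the edges''. The intersecting-link averaging only disposes of the heavy-vertex case, and for $S_2^+$-free families the naive degree/codegree inequalities are hopelessly weak (they only see the $n^{r-1}$ or $n^{r}$ scale), so ruling out a ``spread-out'' configuration really does need the structural muscle of the Delta-system/kernel method; everything after it is a short sequence of local counting arguments.
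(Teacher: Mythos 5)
Your two-step architecture is sensible, and Step 2 (the bootstrap) is essentially correct: the observation that no edge can contain exactly one of $x,y$, the split $|H|=|{\cal A}|+|B|$, the bound on ${\cal A}$-degrees of vertices of $V_0$, and the convexity argument on $\beta\mapsto (1-\beta)+\beta^{r-2}$ all go through for $r\ge4$. Your preliminary remark that $r\ge 4$ is needed is also right; for $r=3$ the disjoint-$K_4^{(3)}$ family is a genuine counterexample, and the survey is implicitly in the regime $r\ge 4$ here.

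The gap is in Step 1, and you have correctly identified it as the crux but the argument you sketch there does not close it. The intersecting-link averaging only helps if some \emph{vertex} has degree $\Omega(n^{r-2})$, which is not forced by $|H|\ge(1-\delta)\binom{n-2}{r-2}$ alone (average vertex degree is only $\Theta(n^{r-3})$). For the remaining case you invoke the $\Delta$-system method and assert that the homogeneous subfamily $H^*$ produced by Lemma~\ref{semilattice} has ``kernel size $0$, $1$, or $2$,'' that size $0$ forces a matching, size $1$ is excluded by $S_2^+$-freeness, and size $2$ gives a pair of codegree $\Omega(n^{r-2})$. None of these steps is justified by the edge count $|H^*|=\Omega(n^{r-2})$, and the size-$0$ case in particular is dismissed too quickly: an intersection-closed pattern $J$ with $\rho(J)\ge r-2$ and no singletons need not have a nonempty common intersection. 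For $r=4$ the pattern $J=\{\emptyset,\{1,2\},\{3,4\}\}$ is legitimate, and the corresponding $H^*$ is a Cartesian product of two matchings of pairs, which is $S_2^+$-free, has $\Omega(n^2)$ edges, and has every pair codegree only $O(n)$ --- so $H^*$ is nothing like a matching of $4$-sets. Since the constant $c(r,s)$ in Lemma~\ref{semilattice} is small, the resulting size bound $|H^*|\le n^2/16$ does not contradict $|H^*|\ge c(1-\delta)\binom{n-2}{2}$, so this case cannot be ruled out by comparing $|H^*|$ with $\binom{n-2}{r-2}$. Even in the ``centered'' case where all nonempty edges of $J$ contain a fixed pair $\{i,j\}$, one only gets that $H^*$ decomposes into vertex-disjoint stars through pairs, and a further convexity argument (which you do not give) is required to conclude that one of these stars carries $\Omega(n^{r-2})$ edges. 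So Step 1 needs a genuine argument --- either an iteration of the semilattice lemma that tracks the near-extremal edge count, or a separate stability analysis à la Frankl's proof of $\ex_r(n,S_2^+)=\binom{n-2}{r-2}$ --- before the rest of your proof can be completed.
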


\subsection{Paths and Cycles}
In this section we consider the case when $G=P_t$ or $G=C_t$, where $P_t$ ($C_t$) is the path (cycle) with $t$ edges.   Erd\H{o}s and Gallai~\cite{EG} proved that $\ex_2(n,P_t) \leq \frac{t-1}{2}n$ and this is tight whenever $t|n$.  Since $P_2=S_2$, we have already seen in (\ref{esf}) that
Frankl~\cite{Frankl1977}  determined $\ex_r(n,P_2^+)$ for large $n$.  F\"uredi, Jiang and Seiver~\cite{FJS} determined
$\ex_r(n, P_t^+)$ precisely for all $r \ge 4$, $t \ge 3$ and $n$ large while also characterizing the extremal examples. This improved results of the authors~\cite{MV1}
and also settled a conjecture from~\cite{MV1}.
They conjectured a similar result for $r=3$.

The case  $G=K_3=C_3$ is also well-researched~\cite{CK,FF, MV}, indeed, when $r=2$ this
 is precisely Mantel's theorem from 1907.  Frankl and F\"{u}redi~\cite{FF} showed that the unique  extremal $r$-graph
 on $[n]$ not containing $C_3^+$ is a star, for large enough $n$.
 For $r=3$,  Cs\'ak\'any  and Kahn~\cite{CK} proved the same result for all $n \ge 6$.
 More recently, F\"uredi and Jiang~\cite{FJ} determined the extremal function for $C_t^+$ for all $t \ge 3$,
 $r \ge 5$ and large $n$; their results substantially extend earlier results of Erd\H os and settled a conjecture
 of the current authors~\cite{MV1} for $r \ge 5$. Recently, Kostochka and the current authors~\cite{KMV} further extended the results in~\cite{FJ} to the case of $r=3,4$.
To state the result for all $r,t$, we need some notation.
For $L\subset [n]$ let $S_{L}^r(n)$
denote the $r$-graph on $[n]$ consisting of all $r$-element subsets of $[n]$
intersecting $L$.

\begin{theorem} {\bf (\cite{FJS, FJ}, \cite{KMV})}  \label{main-path}Let $r \geq 3$, $t\geq 4$, and $\ell = \lfloor \frac{t-1}{2} \rfloor$. For sufficiently large $n$, 
\[
\ex_r(n,P_t^+) = {n \choose r} - {n - \ell \choose r} + \left\{\begin{array}{ll}
0 & \mbox{ if }t\mbox{ is odd}\\
{n - \ell - 2 \choose r - 2} & \mbox{ if }t\mbox{ is even}
\end{array}\right.
\]
with equality only for $S_L^r(n)$ if $t$ is odd and $S_L^r(n) \cup F$ where $F$ is extremal for $\{P_2^+,2P_1^+\}$ on $n - \ell$ vertices if $t$ is even.
The same result holds for $C_t^+$ except  the case $(t,r) = (4,3)$, in which case
\[ \ex_3(n,C_4^+) = {n \choose r} - {n - 1 \choose r} + \max\{n-3,4\lfloor \tfrac{n-1}{4}\rfloor\}\]
with equality only for triple systems of the form $S_L^3(n) \cup F$ where $F$ is extremal for $P_2^+$ on $n - 1$ vertices.
  \end{theorem}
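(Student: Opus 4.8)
Since $P_t^+$ and $C_t^+$ are exactly the \emph{loose} (linear) path and cycle with $t$ edges, Theorem~\ref{main-path} is the Tur\'an problem for loose paths and cycles. I describe the path case in detail and indicate the changes for cycles and for the exceptional pair $(t,r)=(4,3)$.

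\emph{Lower bound.} Put $\ell=\lfloor(t-1)/2\rfloor$, so $2\ell\le t-1$. A vertex lies in at most two edges of a loose path, so a set $L$ with $|L|=\ell$ meets at most $2\ell<t$ of the $t$ edges of any copy of $P_t^+$; hence $S_L^r(n)$ is $P_t^+$-free, which settles odd $t$. For even $t$ one has $2\ell=t-2$, so in $S_L^r(n)\cup F$, with $F$ an $r$-graph on $[n]\setminus L$, every copy of $P_t^+$ has at least two edges disjoint from $L$, and these lie in $F$. Two edges of a loose path are either consecutive (meeting in one vertex, a $P_2^+$) or disjoint (a $2P_1^+$), so it suffices for $F$ to avoid $\{P_2^+,2P_1^+\}$; such an $F$ is exactly a $2$-intersecting $r$-graph, and by the Ahlswede--Khachatrian complete intersection theorem the largest one on $n-\ell$ vertices has $\binom{n-\ell-2}{r-2}$ edges, namely all $r$-sets through a fixed pair. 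For $C_t^+$ the same bound on the number of edges meeting $L$ holds, so these constructions are $C_t^+$-free as well. The anomaly at $(t,r)=(4,3)$ is that, since now $\ell=1$, the two $L$-avoiding edges of a loose $4$-cycle cannot be an opposite (disjoint) pair --- for then the other two edges would also be a disjoint pair and could not both contain the single vertex of $L$ --- so those two edges are consecutive and it suffices for $F$ to be $P_2^+$-free. The largest $P_2^+$-free triple system on $m$ vertices has $\max\{m-2,4\lfloor m/4\rfloor\}$ edges (all triples through a fixed pair, versus vertex-disjoint copies of $\binom{[4]}{3}$), giving the stated value with $m=n-1$.

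\emph{Upper bound, structural step.} Let $H$ be $P_t^+$-free on $[n]$ with $|H|=\ex_r(n,P_t^+)$. One first notes the elementary bound $\ex_r(n,P_t^+)=O(n^{r-1})$, valid since $P_t$ is a forest (iterated degree-cleaning plus a greedy embedding works). The heart of the proof is to produce an $\ell$-set $L$ meeting all but $O(n^{r-2})$ edges of $H$. I would obtain this by the Delta-system method of Frankl and F\"uredi: pass to a subfamily $\mathcal G\subseteq H$ with $|\mathcal G|\ge c_r|H|$ that is kernel-homogeneous --- each edge $e$ carries a kernel $\phi(e)\subsetneq e$, edges with a common kernel have pairwise disjoint petals, and the kernels are closed under the relevant traces --- and then establish the dichotomy that a kernel of small size and unbounded degree can, together with other edges of $\mathcal G$, be chained into a loose path with $t$ edges; consequently every high-degree kernel lies inside a bounded core set, from which $L$ is extracted. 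For small uniformities (in particular $\ex_3(n,C_4^+)$) one can instead use the method of Section~4: sample a representative graph from $H$, apply the Erd\H os--Gallai bound for $P_t$-free graphs, and use canonical Ramsey theory to control the fibres over the sampled pairs.

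\emph{Exactness and uniqueness.} An exchange argument --- the technical core --- then upgrades ``all but $O(n^{r-2})$ edges meet $L$'' to ``$H$ contains every edge meeting $L$'': an edge meeting $L$ but missing from $H$ would, by extremality, let one build a $P_t^+$ through it, and with $H$ rich at $L$ such a copy can already be found inside $H$. Put $H'=\{e\in H:e\cap L=\emptyset\}$; a $P_2^+$ or $2P_1^+$ inside $H'$ would extend to a $P_t^+$ by routing its remaining $2\ell=t-2$ edges through $L$ (all of which now lie in $H$), so $H'$ is $\{P_2^+,2P_1^+\}$-free, whence $|H|\le\binom{n}{r}-\binom{n-\ell}{r}+\binom{n-\ell-2}{r-2}$, with equality only for the construction. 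For odd $t$ the same reasoning forces $H'=\emptyset$; for $(t,r)=(4,3)$ it forces only $P_2^+$-freeness of $H'$, producing the term $\max\{n-3,4\lfloor(n-1)/4\rfloor\}$. \emph{The main obstacles are the structural step and this exchange argument} --- excluding ``diffuse'' $P_t^+$-free configurations of order $n^{r-1}$ not essentially covered by $\ell$ vertices, and then pinning down the local structure exactly --- which is where the full strength of the Delta-system method (or the sampling and canonical Ramsey machinery) is needed and where the parity of $t$, small values of $r$, and the $C_4^+$ anomaly require separate treatment.
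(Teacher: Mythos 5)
Your sketch correctly identifies the lower-bound constructions and the two proof technologies that the cited works deploy: the Delta-system method (F\"uredi--Jiang--Seiver for paths, F\"uredi--Jiang for cycles, $r\geq 4$) and random sampling combined with canonical Ramsey theory (Kostochka--Mubayi--Verstra\"ete for $r=3,4$), which are exactly the methods surveyed in Section~4 here. Your lower-bound case analysis is on target, including the reason $(t,r)=(4,3)$ is exceptional: the two $L$-avoiding edges of a loose $4$-cycle must be consecutive, so $F$ need only be $P_2^+$-free, and for triple systems $\ex_3(m,P_2^+)$ strictly exceeds the $2$-intersecting bound $m-2$ for two residues of $m$ modulo $4$, while for $r\geq 4$ Frankl's theorem makes $\ex_r(m,P_2^+)=\binom{m-2}{r-2}$ coincide with the $2$-intersecting bound, so no anomaly.

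The one step that is not right as written is your ``exchange argument.'' You claim that an edge $e$ meeting $L$ but missing from $H$ ``would, by extremality, let one build a $P_t^+$ through it, and with $H$ rich at $L$ such a copy can already be found inside $H$.'' If that conclusion held, $H$ would already contain $P_t^+$, contradicting $P_t^+$-freeness --- so the statement cannot be literally correct. The actual argument in the cited proofs is more delicate: writing $A=S_L^r(n)\setminus H$ and $B=H\setminus S_L^r(n)$, one uses $|H|=|S_L^r(n)|-|A|+|B|$ together with the lower bound to control the sizes of $A$ and $B$ against one another, and then shows that any copy of $P_2^+$ or $2P_1^+$ inside $B$ can be completed to $P_t^+$ using edges through $L$ \emph{while avoiding the forbidden set $A$}; the crux is that $A$ and $B$ constrain each other, not that a missing edge can simply be swapped. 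Note also that the survey itself does not prove Theorem~\ref{main-path}: Section~5.3 gives only the asymptotic statement for $r=3$, and the exact result with the $\binom{n-\ell-2}{r-2}$ correction term is established in the cited papers, so your proposal should properly be viewed as a reconstruction of those arguments rather than of a proof in this paper.
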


 It was recently shown by Bushaw and Kettle~\cite{BK} that the Tur\' an problem for
disjoint $t$-paths can be easily solved once we know the extremal function for a single $t$-path.
As Theorem \ref{main-path} solves the $t$-paths problem for all $r \ge 3$, the corresponding extremal questions for disjoint $t$-paths are
also completely solved (for large $n$).
A similar situation likely holds for disjoint $t$-cycles, as recently observed by Gu, Li and Shi~\cite{GLS}.

Finally, we remark that the results of~\cite{FJS, FJ, KMV} also give stability versions of Theorem~\ref{main-path}.  Here is a sample result.

\begin{theorem} {\bf (\cite{KMV})} Fix $r \ge 3, t\ge 4$ $\ell = \lfloor \frac{t-1}{2} \rfloor$ and let $H$ be an $n$-vertex $r$-graph with $|H|\sim \ell{n \choose r-1}$ containing no $P_t^+$ or no $C_t^+$.  Then there exists  $H' \subset \partial_{r-1}H$ with $|H'|\sim {n \choose r-1}$ and a set $L$ of $\ell$ vertices of $H$ such that $N_{H'}(e)=L$ for every $e \in H'$. In particular, $|H- L|=o(n^{r-1})$.
\end{theorem}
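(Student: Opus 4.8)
We run the $P_t^+$-free and $C_t^+$-free cases in parallel; the only property of $P_t$ and $C_t$ used is that both have vertex cover number $\lceil t/2\rceil=\ell+1$. Since the conclusion asserts everything only up to an additive $o(n^{r-1})$, we may discard $o(n^{r-1})$ edges whenever convenient. The plan is: (1) an embedding lemma converting a few high-degree vertices into a copy of $P_t^+$ or $C_t^+$; (2) using it to locate an $\ell$-set $L$ of vertices so that only $o(n^{r-1})$ edges of $H$ avoid $L$; (3) reading the individual degrees $d_H(v)$, $v\in L$, off the edge count; (4) deducing the statement about $(r-1)$-sets.

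\emph{Step 1.} There are $\delta>0$ and $n_0=n_0(r,t)$ so that if $n>n_0$ and some $\ell+1$ vertices of $H$ each have degree at least $\delta n^{r-1}$, then $P_t^+\subseteq H$ and $C_t^+\subseteq H$. One places the $\ell+1$ vertices on a minimum vertex cover of the core $P_t$ (resp.\ $C_t$) and completes the expansion greedily. A vertex $u$ with $d_H(u)\ge\delta n^{r-1}$ has, by averaging, linearly many ``high-codegree partners'' $w$ with $d_H(\{u,w\})\ge\gamma n^{r-2}$ for a small fixed $\gamma$ — at least $\alpha n$ of them, with $\alpha$ close to $(r-1)!\,\delta$ once $\gamma$ is small enough — so if $\delta$ is taken above the explicit constant $\tfrac{1}{2(r-1)!}$ then any two such partner sets meet, and each remaining core vertex can be chosen in the intersection of the partner sets of its two neighbours on the core. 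Then, processing the $t$ core edges $\{a,b\}$ (each now a pair with $d_H(\{a,b\})\ge\gamma n^{r-2}$) in turn, one picks the $r-2$ expansion vertices of each avoiding the $O_{r,t}(1)$ vertices used so far; this is possible since the number of edges through a fixed pair meeting a fixed bounded set is $O(n^{r-3})=o(n^{r-2})$. For $C_t^+$ one additionally ensures the first and last chosen core vertices are mutual partners, which is available since those sets are linear-sized.

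\emph{Steps 2--4.} Delete vertices of largest degree from $H$ one at a time as $v_1,v_2,\dots$, and set $L=\{v_1,\dots,v_\ell\}$. A companion lemma, proved by a loose-path greedy after cleaning away vertices of small degree and edges containing a pair of large codegree, says that any $P_t^+$-free (or $C_t^+$-free) $r$-graph with $\Omega(n^{r-1})$ edges has a vertex of degree $\Omega(n^{r-1})$. Hence if $H-L$ still had $\Omega(n^{r-1})$ edges, so would each $H-\{v_1,\dots,v_i\}$ for $i\le\ell$, and so — since deleting vertices does not increase any degree, and with the thresholds calibrated — $H$ would contain $\ell+1$ vertices of degree at least $\delta n^{r-1}$, giving a forbidden copy by Step~1. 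Thus $|H-L|=o(n^{r-1})$ (the passage from $O(n^{r-1})$ to $o(n^{r-1})$ uses the exact hypothesis $|H|\sim\ell\binom{n}{r-1}$ and an iteration of the companion lemma, discussed below). Now $|H|\le|H-L|+\sum_{v\in L}d_H(v)$, together with $|H|\sim\ell\binom{n}{r-1}$, $|H-L|=o(n^{r-1})$ and $d_H(v)\le\binom{n-1}{r-1}$, forces $|L|=\ell$ exactly and $d_H(v)=(1-o(1))\binom{n-1}{r-1}$ for each $v\in L$. Finally, for each $v\in L$ the number of $(r-1)$-sets $S\not\ni v$ with $S\cup\{v\}\notin H$ is $\binom{n-1}{r-1}-d_H(v)=o(n^{r-1})$, while each edge of $H-L$ witnesses at most $r$ pairs $(S,w)$ with $S\cup\{w\}\in H$, $w\notin L$, $S\cap L=\emptyset$; summing, all but $o(n^{r-1})$ of the $(r-1)$-sets $S$ with $S\cap L=\emptyset$ satisfy $N_H(S)=L$. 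Letting $H'$ be this family gives $H'\subseteq\partial_{r-1}H$, $|H'|\sim\binom{n}{r-1}$ and $N_H(S)=L$ for every $S\in H'$, which — with $|H-L|=o(n^{r-1})$ for the ``in particular'' — is the theorem.

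\emph{Main obstacle.} The real work lies in Step~1 and in the descent $O(n^{r-1})\to o(n^{r-1})$ of Step~2. The embedding greedy is routine in spirit but delicate in execution: the intermediate core vertices must have large codegree with both of their neighbours (which is exactly why $\delta$ must be an explicit constant rather than $o(1)$), and the expansion vertices of all $t$ core edges must be chosen disjointly so that every pairwise intersection of the expanded edges is the single prescribed core vertex. Harder still, the companion lemma only yields a gain $\xi(\epsilon)\to0$ as $\epsilon\to0$ and so cannot by itself push the bound on $|H-L|$ below any fixed fraction of $n^{r-1}$; closing this gap requires iterating the embedding and companion lemmas at a descending scale of thresholds while tracking the exact count $|H|\sim\ell\binom{n}{r-1}$ — equivalently, extracting the estimate from the proof of the asymptotic upper bound of Theorem~\ref{main-path} in a stability-aware form rather than from its statement. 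The $C_t^+$ case proceeds identically up to closing the core into a cycle; the exceptional pair $(t,r)=(4,3)$ of Theorem~\ref{main-path} is harmless here, since its extremal number is still $\sim\binom{n}{r-1}=\ell\binom{n}{r-1}$ with $\ell=1$ and its extremal structure is still $S_L^3(n)$.
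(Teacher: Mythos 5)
The survey does not itself prove this statement; it is quoted from \cite{KMV}, and the paper's surrounding machinery — Lemma \ref{fullsub}, the random-sampling Lemma \ref{kttlemma}, the bipartite canonical Ramsey Theorems \ref{canonical}, \ref{canon} for $r=3$, and the Delta-system Lemmas \ref{semilattice}--\ref{deltaconsequence} for $r\ge 4$ — is the route by which it is proved there. Your proposal replaces all of this with a direct degree-counting argument: identify $L$ as the $\ell$ highest-degree vertices, argue that $H-L$ is tiny, read off that the $L$-degrees are near-full, and harvest $H'$ from the $(r-1)$-sets disjoint from $L$. The final bookkeeping (Step 4) is fine: given $|H-L|=o(n^{r-1})$ and $|L|=\ell$, the degree count forces each $v\in L$ to have degree $(1-o(1))\binom{n-1}{r-1}$, and the two error terms you tally do give $|H'|\sim\binom{n}{r-1}$ with $N_H(e)=L$ for all $e\in H'$. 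So Steps 3--4 are sound and also the overall architecture is the right one.

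The gap is exactly where you flag it, and it is not a bookkeeping gap but a methodological one. Your embedding lemma in Step 1 hinges on making the ``partner sets'' of two high-degree vertices intersect by a pure size count, which forces the degree threshold $\delta>1/(2(r-1)!)$. Your companion lemma, even if granted, produces a vertex of degree at least $c(\eta)n^{r-1}$ in $H-L$ when $|H-L|\ge\eta n^{r-1}$, and $c(\eta)\to 0$ as $\eta\to 0$. So the two lemmas are only compatible when $\eta$ is above some fixed threshold, and together they yield $|H-L|\le \epsilon^* n^{r-1}$ for a fixed $\epsilon^*>0$ rather than $o(n^{r-1})$. The natural attempt to bootstrap — show the $L$-vertices have near-full degree so that their partner sets are almost all of $V(H)$ and hence meet any $\Theta(n)$-sized set — is circular, because ``near-full degree'' is itself a consequence of $|H-L|=o(n^{r-1})$. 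Breaking this circularity is precisely what the canonical-Ramsey/$K_{t,t}$-with-out-codegree machinery (Section \ref{scanonical ramsey}) supplies: it finds the forbidden configuration using only a dense shadow with codegree $\ge\sigma$, with no explicit vertex-degree threshold, so it continues to bite even when the residual density $\eta$ is arbitrarily small. Your proposal invokes no replacement for this, and your own ``main obstacle'' paragraph concedes the point by deferring to ``extracting the estimate from the proof of Theorem~\ref{main-path}''. Separately, the companion lemma itself is asserted with only a one-line sketch (``loose-path greedy after cleaning''); it is plausible, and in the regime where it is needed it is essentially the quantitative content of the upper bound in Theorem~\ref{main-path}, but it is not a freebie and it deserves a real proof or a reference. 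Your observation that the $(t,r)=(4,3)$ exception to Theorem~\ref{main-path} is immaterial here (same asymptotic extremal number, same near-extremal structure $S_L^3(n)$) is correct.

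In short: right high-level structure and a correct finish, but the two lemmas doing the actual work — the embedding with an absolute threshold and the high-degree-vertex companion lemma — cannot together push $|H-L|$ below a fixed fraction of $n^{r-1}$; the $o(n^{r-1})$ conclusion requires the shadow/canonical-Ramsey apparatus (or, for $r\ge4$, the Delta-system apparatus) that the paper develops and your proposal omits.
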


\subsection{Trees}

In this section we consider the case when $G$ is acyclic. What makes this problem more complicated than the previous ones where $G$ is a star or path, is that we can produce constructions that exploit the structure of $G$. Let us make this precise.
 A set of vertices in a hypergraph $F$ containing exactly one vertex from every edge of  $F$ is called a {\em crosscut},
following Frankl and F\"{u}redi~\cite{FF}. Let $\sigma(F)$ be the minimum size of a crosscut of $F$ if it exists, i.e.,
\[ \sigma(F) := \min\{|X| : X \subset V(F), \forall e \in F, |e \cap X| = 1\}\]
 if such an $X$ exists.

Since the $r$-graph on $n$ vertices consisting of all edges containing exactly one vertex from a fixed subset of size $\sigma(F) - 1$
does not contain $F$, we have
\begin{equation} \label{triviallower}\ex_r(n,F) \geq (\sigma(F) - 1){n - \sigma(F) + 1 \choose r - 1} \sim (\sigma(F) - 1){n \choose r - 1}.\end{equation}
An intriguing open question is when asymptotic equality holds above and this is one of the main open problems in this area. Indeed, it appears that the parameter $\sigma(F)$ often plays a crucial role in determining the extremal function for $F$.  F\"{u}redi~\cite{Furedi} determined the asymptotics of $\ex_r(n, G^+)$  when $G$ is a forest and $r \geq 4$ and conjectured a similar result for $r=3$.  This conjecture was recently proved by Kostochka and the current authors~\cite{KMV3}.

\begin{theorem} {\bf (\cite{Furedi} for $r\ge 4$, \cite{KMV3} for $r=3$)} \label{main-tree}
Fix $r \ge 3$  and a forest $G$. Then
\[ \ex_r(n,G^+) \sim (\sigma(G^+) - 1){n \choose r-1}.\]
\end{theorem}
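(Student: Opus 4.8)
The plan is to prove the upper bound $\ex_r(n,G^+) \le (\sigma(G^+)-1)\binom{n}{r-1} + o(n^{r-1})$; the matching lower bound is the construction in~(\ref{triviallower}). Write $G$ for the forest, $F = G^+$ for its $r$-expansion, and $\sigma = \sigma(F)$. Let $H$ be an $F$-free $r$-graph on $[n]$ with $|H| \ge (\sigma - 1 + \delta)\binom{n}{r-1}$ for some fixed $\delta > 0$; the goal is to derive a contradiction for $n$ large. First I would pass to a ``cleaned'' subhypergraph: by a standard averaging/deletion argument we may assume every $(r-1)$-set in $\partial_{r-1}H$ that is used at all has degree linear in $n$, i.e.\ lies in $\ge cn$ edges of $H$, while still retaining $\Omega(n^{r-1})$ edges; indeed, $(r-1)$-sets of small degree contribute only $o(n^{r-1})$ edges in total, so we discard the edges through them. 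Equivalently, one works with the shadow $\partial_{r-1}H$, which after cleaning still has $\ge (\sigma - 1 + \delta/2)\binom{n}{r-1}$ sets, each of large codegree.

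The heart of the argument is an embedding lemma for expansions of forests into such ``dense, high-codegree'' hypergraphs, combined with the crosscut structure. The key combinatorial fact about forests is that $G^+$ can be built up one edge at a time: order the edges $e_1,\dots,e_m$ of the forest $G$ so that each $e_i$ shares at most one vertex with $e_1 \cup \dots \cup e_{i-1}$ (possible since $G$ is acyclic — each tree component can be grown from a root). To embed $G^+$ greedily into $H$, at each step we have already placed some core vertices of $G$ and must extend: we need an $(r-1)$-set in $\partial_{r-1}H$ through the already-placed attachment vertex (or any vertex, for a new component) whose remaining $r-2$ ``expansion'' coordinates can be chosen fresh and distinct from everything used so far, and which moreover extends to an edge of $H$. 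Here is where $\sigma$ enters: if $|H|$ (after cleaning) exceeds $(\sigma-1)\binom{n}{r-1}$, then one shows that $\partial_{r-1}H$ cannot be ``covered'' by $\sigma - 1$ vertices in the sense that it contains a subfamily rich enough to route all $m$ edges of $G^+$ with pairwise-disjoint expansion sets. The cleanest way to organize this: a theorem of the ``Frankl--F\"uredi crosscut'' type says that if an $(r-1)$-graph $\mathcal{D}$ on $[n]$ has more than $(\sigma - 1)\binom{n}{r-1}(1+o(1))$ edges, then $\mathcal{D}$ contains the $(r-1)$-expansion-shadow pattern needed — more precisely, one finds in $H$ a ``sunflower-like'' structure of $m$ edges whose pairwise intersections all lie inside a fixed small core on which $G$ can be mapped, and with all petals (the expansion vertices) disjoint. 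The large codegree from the cleaning step is exactly what lets us keep choosing the $r-2$ petal vertices disjoint from the $O(1)$-sized set of previously used vertices.

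The step I expect to be the main obstacle is precisely proving that the threshold in this embedding lemma is $(\sigma(G^+)-1)\binom{n}{r-1}$ and not something larger: one must show that any $(r-1)$-graph on the shadow, once it has $(\sigma-1+\delta)\binom{n}{r-1}$ edges, necessarily contains the crosscut-avoiding configuration that forces a copy of $G^+$. This is a Turán-type statement for $(r-1)$-graphs and is where the combinatorial geometry of crosscuts is used: if no vertex cover of size $\sigma - 1$ of $\partial_{r-1}H$ ``blocks'' all the routes, one extracts, via a greedy or random selection, a set of $m$ edges of $H$ forming the required pattern. For $r \ge 4$ this is F\"uredi's argument~\cite{Furedi}; the subtlety — and the reason the $r=3$ case~\cite{KMV3} is harder — is that when $r=3$ the ``expansion vertices'' are single vertices (each edge of $G$ is enlarged by just one new vertex), the shadow $\partial_2 H$ is an ordinary graph, and the greedy disjointness argument is much tighter, so one needs a more careful stability-and-iteration scheme, possibly invoking the shadow/random-sampling/canonical-Ramsey machinery advertised in Section~4 rather than a bare greedy embedding. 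Finally, one checks the construction~(\ref{triviallower}) is $F$-free — any copy of $G^+$ would need its crosscut to avoid the fixed $(\sigma-1)$-set, impossible since every edge of the construction meets that set — which gives the matching lower bound and completes the asymptotic determination.
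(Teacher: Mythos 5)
Your scaffolding is in the right place: the construction~(\ref{triviallower}) gives the lower bound, and the upper bound should reduce to an embedding statement after a codegree cleaning step. But the two key steps you sketch both have real problems, and the second one is where the proof actually lives.

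First, the cleaning: you cannot pass to a subhypergraph in which every used $(r-1)$-set has \emph{linear} degree. Lemma~\ref{fullsub} trades $d\,|\partial_{r-1}H|$ deleted edges for a $(d+1)$-full subgraph, and $|\partial_{r-1}H|$ can be as large as $\binom{n}{r-1}$; taking $d = cn$ would delete far more than $|H|$ edges. Only a constant-codegree cleaning is affordable, and both the $r\ge 4$ and $r=3$ proofs in the paper work with exactly that.

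Second, and this is the genuine gap: the greedy embedding you describe would stall, because the set of pairs in $\partial H$ rich enough to support a greedy expansion is typically \emph{sparse} --- and that sparsity is precisely what the extremal construction exploits, so you cannot dismiss it. The proof for $r \ge 4$ begins by formalizing this obstruction: setting $K = \{e \in \partial H : \d_H(e) \geq rv\}$ (note the correct notion for $r \ge 4$ is core degree via Delta-systems, not codegree --- you need guaranteed-disjoint petals), Lemma~\ref{kernelembed} shows $G \not\subset K$, hence $|K| = O(n)$, so one cannot hope to embed $G$ entirely inside $K$. The actual argument is a counting argument, not an embedding argument: the Delta-system machinery (Lemmas~\ref{semilattice}--\ref{deltaconsequence}) yields a decomposition $H = H_1 \sqcup H_2$ with $|H_1| = O(n^{r-2})$ and each $f\in H_2$ having a "center" $x\in f$ with $\d_{H_2}(\{x,y\}) \ge rv$ for all $y\in f\setminus\{x\}$; one then partitions $H_2$ further according to where the center falls relative to a small set $L$ of high-degree vertices and bounds each piece, and the constant $\sigma(G^+)-1$ falls out of the bound on the piece $H_6$ via the Tur\'an number of the $(r-1)$-graph $\cC$ obtained from $G^+$ by shrinking each edge along a minimum crosscut. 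No "Frankl--F\"uredi crosscut theorem" is invoked as a black box; that threshold is what the decomposition proves. For $r = 3$ the Delta-system route fails outright (Lemma~\ref{r>3} requires $r\ge 4$), and the proof in~\cite{KMV3} uses random sampling to find a $K_{t,t}\subset\partial H$ of out-codegree $\ge\sigma$, then Theorem~\ref{canonical} to classify the codegree lists into monochromatic/rainbow/canonical colorings, and finally a case analysis; the greedy embedding you propose only covers the easy regime $|\partial H'| = o(n^2)$. In short, the plan correctly identifies where $\sigma$ must enter but outsources the entire difficulty, and the greedy route you sketch for that step does not go through.
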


Stability and exact results for Theorem~\ref{main-tree} do not follow using current proof methods.  It would be interesting to obtain such results.

\subsection{Graphs with crosscuts of size two}

If $\sigma(G^+)=1$, then clearly $G=S_t$ and this case has already been discussed.  Here we focus on the next case, $\sigma(G^+)=2$.  It is straightforward to see that any graph $G$ with $\sigma(G^+) = 2$ is contained in a star with an edge added, or a complete bipartite graph with one part of size two. The following result of
Kostochka and the current authors~\cite{KMV2} gives an asymptotically exact result in these cases:

\begin{theorem} {\bf (\cite{KMV2})}\label{k2t-expansion}
For every fixed graph $G$ with $\sigma(G^+) = 2$,
 $$\ex_3(n,G^+) \sim {n \choose 2}.$$
 In particular, for any graph $G$, $\ex_3(n,G^+) \leq \left(\tfrac{1}{2} + o(1)\right)n^2$ or $\ex_3(n,G^+) \geq (1 + o(1))n^2.$
\end{theorem}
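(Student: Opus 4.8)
The lower bound is immediate: since $\sigma(G^+)=2$, the construction behind~(\ref{triviallower}) already gives $\ex_3(n,G^+)\ge\binom{n-1}{2}\sim\binom n2$, and this same construction yields the closing ``in particular'' once one observes that $\sigma(G^+)\ge 3$ forces $\ex_3(n,G^+)\ge 2\binom{n-2}{2}=(1+o(1))n^2$, while $\sigma(G^+)=1$ forces $G$ to be a star and hence $\ex_3(n,G^+)=\Theta(n)$ by Duke--Erd\H os. So the entire content lies in the upper bound $\ex_3(n,G^+)\le(\tfrac12+o(1))n^2$, and the plan is first to reduce it to two concrete forbidden graphs. Expansion Tur\'an numbers are monotone under subgraph containment: if $G\subseteq G'$ then $G^+$ is isomorphic to a subhypergraph of $G'^+$, so a $G^+$-free $3$-graph is also $G'^+$-free and $\ex_3(n,G^+)\le\ex_3(n,G'^+)$. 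Since, as observed just before the theorem statement, every $G$ with $\sigma(G^+)=2$ is contained either in a complete bipartite graph with one part of size two --- some $K_{2,t}$ --- or in a star with an extra edge --- some $K_{1,t}+e$ --- it suffices to bound $\ex_3(n,K_{2,t}^+)$ and $\ex_3\big(n,(K_{1,t}+e)^+\big)$ by $(\tfrac12+o(1))n^2$; note the latter for $t=2$ is just $\ex_3(n,K_3^+)\sim\binom n2$ (Cs\'ak\'any--Kahn, Frankl--F\"uredi), so only $t\ge 3$ is new there.

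For $K_{2,t}^+$ I would invoke the case $s=2$ of the asymptotics $\ex_3(n,K_{s,t}^+)\sim(s-1)\binom n2$ valid for $t>(s-1)!$ --- proved in Section~5 by the shadow / random-sampling / canonical-Ramsey method --- which for $s=2$ applies to all $t\ge 2$ and gives exactly $\binom n2$. The mechanism I would use, and which also guides the $K_{1,t}+e$ case, is as follows. Call a pair $f\in\partial H$ \emph{heavy} if $d_H(f)\ge C$ for a large constant $C=C(t)$. If $H$ is $K_{2,t}^+$-free then no two vertices $x,y$ have $t$ common ``heavy neighbours'' $v$ (that is, with both $\{x,v\}$ and $\{y,v\}$ heavy): otherwise, at each such $v$ there are $\ge C$ candidate expansion vertices on the $x$-side and on the $y$-side, only a bounded number of vertices have been used so far, and a greedy choice produces a copy of $K_{2,t}^+$. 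Hence the graph of heavy pairs is $K_{2,t}$-free, so has only $O(n^{3/2})$ edges. For $(K_{1,t}+e)^+$ the same scheme works with ``$t$ heavy petals at a common vertex, together with one heavy pair joining two petal tips'' replacing the forbidden $K_{2,t}$; the joining pair is essential, because $K_{1,t}^+$-freeness alone only says each link is $M_t$-free, which by Erd\H os--Gallai gives $|H|\le\tfrac{t-1}{3}n^2(1+o(1))$ --- the right order of magnitude but, for $t\ge 3$, the wrong constant.

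The hard part will be to promote ``the heavy part of $H$ carries no forbidden pattern'' into the \emph{sharp} bound, with constant exactly $\tfrac12$. The naive split --- heavy edges are few because the heavy graph is $O(n^{3/2})$, light edges are few because their codegree is below $C$ --- fails, since the light edges alone may number $\tfrac C6 n^2\gg\tfrac12 n^2$ and $C$ must be of order $t$ for the greedy embedding to run. This is exactly where random sampling and the Erd\H os--Rado canonical Ramsey theorem enter: from the assumption $|H|>(\tfrac12+\e)n^2$ one first extracts, by sampling and cleaning, a substructure in which the co-neighbourhood map $f\mapsto N_H(f)$ on pairs is sufficiently rich; one then finds within it a large canonically coloured vertex set and shows that the resulting canonical pattern is incompatible with $K_{2,t}^+$-freeness (respectively $(K_{1,t}+e)^+$-freeness) unless the codegrees there are uniformly bounded --- and the bounded-codegree alternative is precisely what keeps the edge count at $\tfrac12 n^2$. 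In effect, exceeding the star bound forces precisely the codegree richness that the canonical Ramsey step turns into a forbidden expansion. Making this dichotomy quantitative --- keeping the thresholds, and the $o(1)$ losses incurred in sampling and in Ramsey, mutually consistent, and carrying it out simultaneously for the $K_{2,t}$ and the $K_{1,t}+e$ patterns --- is the technical heart of the proof.
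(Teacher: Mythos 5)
Your lower bound, the ``in particular'' dichotomy, and the reduction to the two patterns $K_{2,t}$ and $K_{1,t}+e$ are all correct, and you have correctly identified the main tools of the paper's proof (Lemma~\ref{kttlemma}, Theorem~\ref{canonical}/\ref{canon}). But there are two genuine problems in the rest.

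First, you invoke a nonexistent prior result. You claim the asymptotics ``$\ex_3(n,K_{s,t}^+)\sim(s-1)\binom n2$ valid for $t>(s-1)!$'' and apply it at $s=2$. The statement that is actually proved in Section~5.1 is Theorem~\ref{kst-expansion}, which for $s\ge 3$ says $\ex_3(n,K_{s,t}^+)=\Theta(n^{3-1/s})$ --- superquadratic, not $(s-1)\binom n2$ --- and which the paper explicitly says does \emph{not} extend to $s=2$; the quadratic answer at $s=2$ is precisely Theorem~\ref{k2t-expansion}, so invoking it here is circular. (Also, that Section~5.1 proof uses only K\"ovari--S\'os--Tur\'an and Lemma~\ref{fullsub}, not canonical Ramsey.)

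Second, and more substantively, the codegree-based dichotomy you set up does not lead to the sharp constant, and you have no concrete mechanism to fix it. Your heavy/light split puts the light triples at roughly $\tfrac C6 n^2$, and the triples through heavy pairs are only bounded by $O(n^{3/2})\cdot n=O(n^{5/2})$; neither term is $(\tfrac12+o(1))n^2$, so the decomposition itself is the wrong one. The step you are missing is Lemma~\ref{fullsub} applied with $d=\sigma-1=1$: if $|H|\ge(1+\e)\binom n2$ then deleting, one at a time, edges through a pair of codegree $\le 1$ removes at most $|\partial H|\le\binom n2$ edges, leaving a $2$-full subgraph $H'$ with $|H'|\ge\e\binom n2$. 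This single cleanup step is what ``pays for'' all the low-codegree pairs, with no constant loss, and it is then $H'$ --- not $H$ --- that one dichotomizes on: if $|\partial H'|$ is small one iterates Lemma~\ref{fullsub} to make $\partial H''$ of large minimum degree and greedily embeds $G$ and then $G^+$; if $|\partial H'|$ is a positive fraction of $\binom n2$, one runs Lemma~\ref{kttlemma} and Theorem~\ref{canon} and carries out a case analysis of the canonical colourings (as the paper illustrates for $C_4^+$). Your closing sentence --- that the ``bounded-codegree alternative is precisely what keeps the edge count at $\tfrac12n^2$'' --- is the same miscalculation again: bounded codegree alone gives $\Theta(Cn^2)$, not $\tfrac12n^2$. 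Without the full-subgraph extraction, the dichotomy you describe does not close.
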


The last statement in Theorem \ref{k2t-expansion} follows easily from the first, since if $\sigma(G^+) = 3$ then
$\ex_3(n,G^+) \geq (1 + o(1))n^2$. Theorem \ref{k2t-expansion} determines the asymptotic behavior of $\ex_3(n,G^+)$ when $\sigma(G^+) = 2$;
the next case $\sigma(G^+) = 3$ is an avenue of further research.
It does appear, however, that determining the order of magnitude of $\ex_3(n,G^+)$ is more difficult when $\sigma(G^+) = 3$;
it is shown in~\cite{KMV3} that if $G = K_{3,t}$ then $\ex_3(n,G^+)/n^2 \rightarrow \infty$ as $t \rightarrow \infty$.

\subsection{Complete bipartite graphs}

The general problem of determining $\mbox{ex}_3(n,G^+)$ when $G$ is bipartite was studied in~\cite{KMV3}.
In this case, the problem is related to maximizing the number of triangles in a $G$-free graph: if one takes as the hypergraph
the triangles in the graph, then one obtains a triple system not containing $G^+$. This framework was studied by
Alon and Shikhelman~\cite{AS} in the context of $K_{s,t}$-free graphs who considered the following question: what is the maximum number of triangles in a
$K_{s,t}$-free graph on $n$ vertices? Kostochka and the current authors~\cite{KMV3} give a short proof of one of their results, and in particular, prove the following more general theorem.

\begin{theorem} {\bf (\cite{KMV3})} \label{kst-expansion}
For $t \geq s \geq 3$ fixed, $\ex_3(n,K_{s,t}^+) = O(n^{3 - \frac{1}{s}})$ and $\ex_3(n,K_{s,t}^+) = \Theta(n^{3 - \frac{1}{s}})$ for $t > (s - 1)!$.
\end{theorem}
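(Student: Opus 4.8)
I would prove the two bounds separately: the upper bound $O(n^{3-1/s})$ is a short argument via heavy pairs, the K\H{o}v\'ari--S\'os--Tur\'an theorem and greedy expansion; the matching lower bound $\Omega(n^{3-1/s})$ for $t>(s-1)!$ needs a genuinely clever construction, and verifying its $K_{s,t}^+$-freeness is the main obstacle.

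\textbf{Upper bound.} Let $H$ be a $K_{s,t}^+$-free $3$-graph on $[n]$ and put $C=s+t+st$. Call a pair $e\in\binom{[n]}{2}$ \emph{heavy} if $d_H(e)\ge C$, and let $D$ be the graph formed by the heavy pairs. The key point is that $D$ must be $K_{s,t}$-free: if $D$ contained a copy of $K_{s,t}$ with parts $X$, $Y$ (so $|X|=s$, $|Y|=t$), process its $st$ edges one at a time; on reaching an edge $e$, fewer than $st$ apex vertices have been chosen so far, and since $d_H(e)\ge C>|X\cup Y|+st$ we may pick $v_e\in N_H(e)$ avoiding $X\cup Y$ and all previously chosen apices, producing $K_{s,t}^+\subseteq H$ — a contradiction. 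Hence $|D|\le\ex_2(n,K_{s,t})=O(n^{2-1/s})$ (we use only $t\ge s$ here). Since $3|H|=\sum_e d_H(e)\le(n-2)|D|+C\binom n2=O(n^{3-1/s})+O(n^2)$ and $3-\tfrac1s>2$, this gives $\ex_3(n,K_{s,t}^+)=O(n^{3-1/s})$; this half needs only $t\ge s\ge 2$.

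\textbf{Lower bound for $t>(s-1)!$: set-up.} In exactly this range the K\H{o}v\'ari--S\'os--Tur\'an bound is attained: there is a $K_{s,t}$-free graph $\Gamma$ on $[n]$ with $\Theta(n^{2-1/s})$ edges (Brown's construction for $s=3$, and the norm graphs of Koll\'ar--R\'onyai--Szab\'o and Alon--R\'onyai--Szab\'o in general). We need a $3$-graph $H$ with $|H|=\Theta(n^{3-1/s})$ and no $K_{s,t}^+$. The upper-bound analysis already pins down the shape of such an $H$: its pairs of codegree $\Omega(n)$ must span a $K_{s,t}$-free graph with $\Theta(n^{2-1/s})$ edges (essentially $\Gamma$), each such ``heavy'' pair must be extended by about $n$ apex vertices, and every other pair may carry only $O(1)$ codegree. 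So the construction is forced to look like $H=\{\,xy\cup\{v\}:xy\in\Gamma,\ v\in A_{xy}\,\}$ (possibly with a few extra triples) for suitably chosen apex sets $A_{xy}$ with $|A_{xy}|=\Theta(n)$, and then a double count of $\sum_e d_H(e)$ yields $|H|=\Theta(n^{3-1/s})$.

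\textbf{The main obstacle.} The real content is forbidding $K_{s,t}^+$, and this is far stronger than asking that $\partial H$ be $K_{s,t}$-free. Indeed, if $\partial H$ were $K_{s,t}$-free then every edge of $H$ would be a triangle of $\partial H$, forcing $|H|=O\big((\ex_2(n,K_{s,t}))^{3/2}\big)=O(n^{3-3/(2s)})=o(n^{3-1/s})$ --- too few. So $\partial H$ must contain many copies of $K_{s,t}$, and one must arrange that \emph{not one} of them can be completed to $K_{s,t}^+$; equivalently, for every such core copy the family of apex-neighbourhoods $\{N_H(x_iy_j)\}$ must fail Hall's condition. This is precisely where naive choices of $A_{xy}$ break down. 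If the $A_{xy}$ are merely ``generic'' $\Theta(n)$-sets then, because an extremal $K_{s,t}$-free graph already contains $K_{s-1,t}$ (its edge count exceeds $\ex_2(n,K_{s-1,t})=O(n^{2-1/(s-1)})$), an adversary can take $s-1$ core vertices realizing a copy of $K_{s-1,t}$ inside $\Gamma$, let the last core vertex play the role of an apex vertex, route $(s-1)t$ of the expansion-edges through core pairs of $\Gamma$ and the remaining $t$ through apex pairs, and thereby build $K_{s,t}^+$ --- all distinctness requirements being easy to meet since the relevant sets have size $\omega(1)$. Defeating this forces the apex sets to be defined using the finer (algebraic) structure of $\Gamma$, e.g.\ a norm-graph incidence between a heavy pair and its apices, so that every ``off-diagonal'' realization of the $st$ expansion-edges is blocked. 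Making such a construction precise and verifying it has no $K_{s,t}^+$ is the delicate step I expect to dominate the work; together with the upper bound it then gives $\ex_3(n,K_{s,t}^+)=\Theta(n^{3-1/s})$ for $t>(s-1)!$.
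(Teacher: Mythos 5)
Your upper bound argument is correct as far as it goes, but it proves a weaker bound than what is actually true, and this in turn has misled your lower bound. The exponent $3-\tfrac{1}{s}$ in the statement of Theorem~\ref{kst-expansion} is a typo in this survey: the correct exponent is $3-\tfrac{3}{s}$, as the paper's own proof in Section~\ref{proof-kst} makes explicit (``we show $\ex_3(n,K_{s,t}^+)=O(n^{3-3/s})$'' and ``a $K_{s,t}$-free graph $G$ \dots which has $\Theta(n^{3-3/s})$ triangles''), and as stated in \cite{KMV3}.

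Your heavy-pair argument gives $|H| = O(n\cdot\ex_2(n,K_{s,t})) + O(n^2) = O(n^{3-1/s})$, which is correct but not sharp. The paper's proof refines this by iterating Lemma~\ref{fullsub}: after passing to a $d$-full subgraph $H''$ (with $d=|H|/2f(n)$) whose shadow is still $K_{s,t}$-free and hence has $\leq f(n)=O(n^{2-1/s})$ edges, there is a vertex $v$ of degree at most $m=f(n)/n=O(n^{1-1/s})$ in $\partial H''$. Its neighborhood $N(v)$ must be $K_{s-1,t}$-free (else joining $v$ yields $K_{s,t}$ in $\partial H''$), so it spans at most $g(m)=O(m^{2-1/(s-1)})=O(n^{2-3/s})$ edges; but $N(v)$ is $d$-full, giving $\tfrac12 dm\le g(m)$ and therefore $|H|=O(g(m)f(n)/m)=O(n^{3-3/s})$. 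Your argument only uses one round of cleaning and does not exploit the low-degree vertex in the shadow, which is where the extra factor $n^{-2/s}$ comes from.

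Your lower bound discussion is the more serious misstep, and it is a direct consequence of chasing the wrong target. You observe, correctly, that a triple system with $\Theta(n^{3-1/s})$ edges cannot have a $K_{s,t}$-free shadow, and you conclude that one must invent a delicate algebraic construction with carefully designed apex sets. But no such construction is needed (and your own upper-bound heuristic should have told you the target cannot exceed $O(n^{3-3/s})$): the actual extremal number is $\Theta(n^{3-3/s})$, and the matching lower bound is exactly the ``naive'' construction you dismissed. One takes $H$ to be the set of triangles of the norm graph of \cite{ARS}, which is $K_{s,t}$-free for $t>(s-1)!$; the norm graph has $\Theta(n^{3-3/s})$ triangles (each edge has codegree $\Theta(n^{1-2/s})$), and $H$ is $K_{s,t}^+$-free precisely because every pair covered by an edge of $H$ is an edge of the norm graph. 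Your Kruskal--Katona bound $O(m^{3/2})=O(n^{3-3/(2s)})$ on the triangle count is an upper bound and is compatible with $\Theta(n^{3-3/s})$ (since $3-\tfrac{3}{2s} \geq 3-\tfrac{3}{s}$); it only looked like an obstruction because you compared it against the incorrect $n^{3-1/s}$.
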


We give the proof of this theorem in Section \ref{proof-kst}. Theorem \ref{kst-expansion} does not extend to the case $s = 2$,
and in that case $\ex_3(n,K_{2,t}^+) \sim {n \choose 2}$ as shown by Theorem \ref{k2t-expansion}.  For a general bipartite graph $G$,
the following is proved in~\cite{KMV3}.

\begin{theorem}\label{generalbipartite}
Let $G$ be a bipartite graph such that $\ex(n,G) = O(n^{1 + \alpha})$ and $\ex(n,G - \{v\}) = O(n^{1 + \beta})$ for
every $v \in V(G)$. Then $\ex_3(n,G^+) = O(n^{1 + \alpha + \alpha \beta}) + O(n^2)$.
\end{theorem}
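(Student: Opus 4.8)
The plan is to deduce the bound from a purely graph-theoretic estimate on the number of triangles in a $G$-free graph — the object implicitly behind the standard lower-bound constructions for $\ex_3(n,G^+)$.

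First I would set up a reduction. Let $H$ be a $G^+$-free $3$-graph on $[n]$, put $m=|H|$, and fix a large constant $k=k(G)>|V(G)|+|E(G)|$. Call a pair $e\in\partial_2 H$ \emph{dense} if $d_H(e)\ge k$. The edges of $H$ meeting some non-dense pair number at most $\sum_{e\text{ non-dense}}d_H(e)<k\binom n2=O(n^2)$, so deleting them leaves a $G^+$-free $H'\subseteq H$ with $m\le|H'|+O(n^2)$ in which every edge has all three of its pairs dense; write $\Gamma:=\partial_2 H'$. If $\Gamma$ contained a copy of $G$, we could process its $|E(G)|$ edges one at a time and pick for each a fresh expansion vertex from its ($H$-)codegree set — always possible since $k$ exceeds $|V(G)|+|E(G)|$ — obtaining $G^+\subseteq H$, a contradiction. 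Hence $\Gamma$ is $G$-free, so $e(\Gamma)=O(n^{1+\alpha})$. Moreover each edge of $H'$ is a triangle of $\Gamma$, and distinct edges give distinct triangles, so $m\le t_3(\Gamma)+O(n^2)$, where $t_3(\Gamma)$ is the number of triangles of $\Gamma$. It therefore suffices to show that every $G$-free graph $\Gamma$ on $n$ vertices has $t_3(\Gamma)=O(n^{1+\alpha+\alpha\beta})$.

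The local input is that for every vertex $v$ of $\Gamma$ the neighbourhood graph $\Gamma[N_\Gamma(v)]$ is $(G-u)$-free for \emph{every} $u\in V(G)$: a copy of $G-u$ inside $\Gamma[N_\Gamma(v)]$, together with $v$ in the role of $u$, would be a copy of $G$ in $\Gamma$ (legitimate since $v$ is joined to its whole neighbourhood). Consequently $e(\Gamma[N_\Gamma(v)])=O\big(\min\{n^{1+\alpha},\,d_\Gamma(v)^{1+\beta}\}\big)$, using $\ex(n,G-u)=O(n^{1+\beta})$ for the second term; summing the identity $3t_3(\Gamma)=\sum_v e(\Gamma[N_\Gamma(v)])$ gives $t_3(\Gamma)=O\big(\sum_v\min\{n^{1+\alpha},d_\Gamma(v)^{1+\beta}\}\big)$, with $\sum_v d_\Gamma(v)=O(n^{1+\alpha})$.

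The step I expect to be the main obstacle is squeezing the sharp exponent out of this last sum. A crude convexity estimate — concentrating the degree budget at the threshold $d_\Gamma(v)\asymp n^{(1+\alpha)/(1+\beta)}$ where the two terms of the minimum meet — only yields $t_3(\Gamma)=O\big(n^{(1+\alpha)(1+2\beta)/(1+\beta)}\big)$, which is strictly larger than $n^{1+\alpha+\alpha\beta}$ whenever $0<\alpha\beta<1$. Reaching the target exponent forces one to use the \emph{global} $G$-freeness of $\Gamma$ beyond the single inequality $e(\Gamma)=O(n^{1+\alpha})$: roughly, not too many vertices can simultaneously have large degree and a near-extremal $(G-u)$-free neighbourhood, because heavily overlapping such neighbourhoods again create a copy of $G$. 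I would make this precise by a second, nested application of the same idea (passing to a high-degree core and repeating the neighbourhood argument one level down, then optimising the cut-off) — a two-level count in the spirit of the shadow and random-sampling techniques described earlier, and of the Alon–Shikhelman analysis~\cite{AS} in the case $G=K_{s,t}$; this is the delicate part. The additive $O(n^2)$ in the statement is exactly the contribution of the non-dense pairs discarded at the start of the reduction.
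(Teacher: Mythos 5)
Your setup matches the opening of the paper's proof of Theorem~\ref{kst-expansion}, of which the present statement is described as a direct generalisation: pass to a constant-full $G^+$-free subhypergraph $H'$ (losing $O(n^2)$ edges), observe that $\Gamma := \partial H'$ is $G$-free so $e(\Gamma)=O(n^{1+\alpha})$, and note that $\Gamma[N_\Gamma(v)]$ is $(G-u)$-free for every $u\in V(G)$. You also correctly diagnose that the per-vertex estimate $3t_3(\Gamma)\le C\sum_v\min\{n^{1+\alpha},d_\Gamma(v)^{1+\beta}\}$ is not strong enough. But the step you leave open is a genuine gap, and the route you gesture at --- a high-degree vertex core, together with a count of how many vertices can simultaneously have large degree and near-extremal $(G-u)$-free neighbourhoods --- is not the mechanism that works and is not clearly on the right track.

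The missing idea is a second, \emph{adaptive} application of Lemma~\ref{fullsub} at the level of pair codegrees. One sets $d:=|H|/(2e(\Gamma))$ and extracts from $H'$ a $d$-full subhypergraph $H''$, losing at most $d\cdot|\partial H'|\le|H|/2$ edges, so that every pair in $\partial H''$ lies in at least $d$ triples of $H''$. Now pick a vertex $v$ of \emph{minimum positive} degree $m$ in $\partial H''\subset\Gamma$; since $\partial H''$ is $G$-free, $m = O(n^{\alpha})$. Codegree-fullness forces the link of $v$ in $H''$ to be a graph on the $m$ vertices of $N_{\partial H''}(v)$ with minimum degree at least $d$, while this link lies inside $\Gamma[N_\Gamma(v)]$ and is therefore $(G-u)$-free on $m$ vertices, hence has $O(m^{1+\beta})$ edges. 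Comparing $dm/2 \le O(m^{1+\beta})$ gives $d = O(m^\beta) = O(n^{\alpha\beta})$, so $|H| = 2d\,e(\Gamma) = O(n^{1+\alpha+\alpha\beta})$, plus the $O(n^2)$ discarded at the start. Two features are essentially opposite to your sketch: the decisive vertex has \emph{low}, not high, degree, and the quantity being pinned down is the pair-codegree threshold $d$, not a vertex degree. Once you pass from $H'$ to the bare graph $\Gamma$ and its triangle count, the codegree structure that drives this argument is discarded; you could reinstate it by running the same two-pass fullness argument on the triangle triple system of $\Gamma$, but in either formulation this adaptive $d$-full refinement is precisely what your proposal leaves unproved.
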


Let $Q$ denote the 3-dimensional cube graph.
Erd\H{o}s and Simonovits~\cite{ErdosSimCube} showed $\mbox{ex}(n,Q) = O(n^{8/5})$ and $\mbox{ex}(n,Q - \{v\}) = O(n^{3/2})$ for all $v \in Q$.
Applying Theorem \ref{generalbipartite}, we find $\mbox{ex}_3(n,Q^+) = O(n^{1.9}) + O(n^2) = O(n^2)$ and therefore $\mbox{ex}_3(n,Q^+) = \Theta(n^2)$.
In fact for any bipartite graph $G$ (except a star) such that $\mbox{ex}(n,G) = O(n^{1 + \alpha})$ where $1 + \alpha$ is less than the golden ratio
$(1 + \sqrt{5})/2$, one has $\mbox{ex}_3(n,G^+) = \Theta(n^2)$. This includes the family of all bipartite graph in which the
vertices in one part have degree two, since for any such graph $G$, $\ex_2(n,G) = O(n^{3/2})$ using the method of dependent random
choice~\cite{FoxS}:

\begin{corollary}
Let $G$ be a bipartite graph such that all the vertices in one part have degree at most two.
Then $\ex_2(n,G^+) = O(n^2)$.
\end{corollary}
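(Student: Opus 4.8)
The plan is to read this off from Theorem~\ref{generalbipartite}: the whole task is to verify the two hypotheses of that theorem for $G$ and for every vertex-deletion $G-\{v\}$ (here $G^+$ is the $3$-uniform expansion). Fix a bipartition $(A,B)$ of $G$ in which every vertex of $B$ has degree at most two, and put $m = |A|+|B|$. The one substantive step is to prove $\ex_2(n,G) = O(n^{3/2})$, which is a standard application of dependent random choice~\cite{FoxS}. Given an $n$-vertex host graph $H$ of average degree $d$, I would pick two vertices $x,y$ of $H$ independently and uniformly at random and set $U = N_H(x)\cap N_H(y)$; convexity gives $\EE|U|\ge d^2/n$, while the expected number of \emph{bad} pairs inside $U$ — pairs of vertices with fewer than $m$ common neighbours in $H$ — is at most $\binom{n}{2}(m/n)^2 = O_G(1)$. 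Hence once $d > C_G\sqrt n$ for a suitable $C_G$, some choice of $x,y$ makes $|U|$ minus the number of bad pairs of $U$ exceed $m$; deleting one endpoint of each bad pair leaves $U'\subseteq U$ with $|U'|\ge m$ in which \emph{every} pair of vertices has at least $m$ common neighbours in $H$. Then I embed $A$ into $U'$ arbitrarily and process $B$ greedily: each $u\in B$ has at most two neighbours in $A$, hence is assigned a pair inside $U'$ with at least $m\ge |A|+|B|$ common neighbours, so, since at most $|A|+|B|-1$ vertices have been used so far, an unused image always exists. This produces a copy of $G$, so $e(H) = dn/2 = O(n^{3/2})$ whenever $H$ is $G$-free.

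Next I would observe that the hypothesis for $G-\{v\}$ is automatic: deleting a vertex cannot increase any degree, and whichever side $v$ lies on, one part of $G-\{v\}$ still has maximum degree at most two; moreover $G-\{v\}$ is still bipartite (possibly with an empty or edgeless part, as happens for a star, where the statement is anyway trivial). So the argument of the previous paragraph also yields $\ex_2(n,G-\{v\}) = O(n^{3/2})$ for every $v\in V(G)$, and I may take $\alpha = \beta = \tfrac12$ in Theorem~\ref{generalbipartite}.

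Finally, substituting $\alpha = \beta = \tfrac12$ into Theorem~\ref{generalbipartite} gives
$$\ex_3(n,G^+) = O\!\big(n^{1+\alpha+\alpha\beta}\big) + O(n^2) = O(n^{7/4}) + O(n^2) = O(n^2),$$
which is the assertion. The only real obstacle is the first step, the dependent random choice estimate; the two points to watch there are that the threshold $m$ must be at least $|A|+|B|$ so that the greedy extension of $B$ never runs out of room, and that $m$ depends only on $G$ so that the bad-pair count stays $O(n)$ after multiplying by $(m/n)^2$ — both are immediate here. (For a star $G=K_{1,t}$ the whole statement already follows from Section~3.2, where $\ex_3(n,S_t^+)=O(n)$.)
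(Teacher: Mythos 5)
Your proof is correct and follows essentially the same route the paper sketches: establish $\ex_2(n,G)=O(n^{3/2})$ via dependent random choice and then substitute $\alpha=\beta=\tfrac12$ into Theorem~\ref{generalbipartite} to get $O(n^{7/4})+O(n^2)=O(n^2)$. The only small redundancy is that you re-run the dependent random choice argument for each $G-\{v\}$, whereas subgraph monotonicity $\ex_2(n,G-\{v\})\le\ex_2(n,G)$ already gives $\beta\le\alpha$ for free, which is how the paper's preceding ``golden ratio'' observation handles this uniformly.
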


\subsection{General expansions}

If $G$ is a 3-colorable graph, then $\chi(G^+) \le 3$ and therefore $\ex_3(n,G^+) = o(n^3)$. In the last section,
we saw examples of bipartite graphs for which the order of magnitude of $\ex_3(n,G^+)$ can be determined. One may ask for which
3-colorable graphs $G$ one has $\ex_3(n,G^+) = O(n^2)$. Theorem \ref{main-path} shows that for any odd cycle $C$, $\ex_3(n,C^+) \sim (\sigma(C) - 1){n \choose 2}$. The following result gives a superquadratic lower bound in many cases.

\begin{theorem} {\bf (\cite{KMV3})}\label{acyclic}
Let $G$ be a graph such that in every proper coloring of $G$, every pair of color classes induces a graph containing a cycle.
Then for some $c > 2$, $\ex_3(n,G^+) = \Omega(n^{c})$.
\end{theorem}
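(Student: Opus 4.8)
The plan is to produce a tripartite $3$-graph whose $\{V_1,V_2\}$-shadow is a bipartite graph of large but bounded girth. First a reduction: the hypothesis forces $\chi(G)\ge 3$, since a bipartite graph always has a proper $3$-colouring in which some pair of colour classes spans no edge (split one side of a bipartition, or handle a single edge directly), and such a pair induces no cycle. If $\chi(G)\ge 4$ then $\ex_3(n,G^+)=\Theta(n^3)$ by the theorem of Erd\H os quoted in the introduction and we are done with $c=3$; so from now on assume $\chi(G)=3$, and set $h=|V(G)|$.

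The construction: partition $[n]$ into parts $V_1,V_2,V_3$ of sizes $N=\lfloor n/3\rfloor$. Let $B\subseteq V_1\times V_2$ be a bipartite graph with girth greater than $h$ and with $|B|=\Omega(N^{1+1/h})$ edges. Such $B$ exists by the usual deletion argument: a random bipartite graph with edge-probability $p=N^{-1+1/h}$ has $\sim N^{1+1/h}$ edges in expectation and, since the expected number of cycles of length $\ell$ is at most $N^\ell p^\ell=N^{\ell/h}\le N$ for $\ell\le h$, only $O(N)$ cycles of length at most $h$ in expectation; deleting one edge from each short cycle leaves a graph as required. Now define
\[ H=\bigl\{\{x,y,z\}:\, x\in V_1,\ y\in V_2,\ z\in V_3,\ xy\in B\bigr\}.\]
Then $|H|=|B|\cdot N=\Omega(N^{2+1/h})=\Omega(n^{2+1/h})$, so it suffices to prove that $H$ is $G^+$-free; this gives the theorem with $c=2+1/h>2$.

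Suppose for contradiction that $\varphi$ embeds $G^+$ into $H$. For each $uv\in E(G)$, the image under $\varphi$ of the edge of $G^+$ expanding $uv$ is an edge of $H$, hence meets each of $V_1,V_2,V_3$ in exactly one vertex; in particular $\varphi(u)$ and $\varphi(v)$ lie in distinct parts. Therefore the map $\psi\colon V(G)\to\{1,2,3\}$ sending each vertex to the index of the part containing its image is a proper $3$-colouring of $G$, and since $\chi(G)=3$ all three colour classes $U_1,U_2,U_3$ are nonempty. By hypothesis $G[U_1\cup U_2]$ contains a cycle; as $U_1$ and $U_2$ are independent, this is an even cycle $v_0v_1\cdots v_{2k-1}v_0$ (indices mod $2k$) with consecutive vertices in different classes, and $2k\le h$. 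For each $j$, the vertices $\varphi(v_j)$ and $\varphi(v_{j+1})$ lie one in $V_1$ and one in $V_2$; since the edge $v_jv_{j+1}$ of $G$ expands to an edge of $H$ containing them (and a vertex of $V_3$), the definition of $H$ forces $\varphi(v_j)\varphi(v_{j+1})\in B$. As $\varphi$ is injective, $\varphi(v_0)\varphi(v_1)\cdots\varphi(v_{2k-1})\varphi(v_0)$ is a cycle of length $2k\le h$ in $B$, contradicting that $B$ has girth greater than $h$.

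The crux of the argument is the dictionary between copies of $G^+$ in a tripartite $3$-graph and proper $3$-colourings of $G$, set up so that every edge inside a colour-pair descends to an edge of the $\{V_1,V_2\}$-shadow; the only point needing care is identifying precisely which subgraph of $G$ this forces into the shadow — here a cycle, whose length is automatically at most $|V(G)|$, so a single bounded-girth requirement on $B$ does the job. After that, the construction is essentially forced and the only external input is the (routine) existence of bipartite graphs with bounded girth and superlinearly many edges. I do not expect this method to give $c$ much larger than $2$ in general: already a $C_4$-free shadow caps $|H|$ at $O(n^{5/2})$, and determining the true order of magnitude of $\ex_3(n,G^+)$ for these $G$ is left open.
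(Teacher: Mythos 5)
Your proposal is correct and follows essentially the same approach as the paper: a $3$-partite triple system whose $V_1$-$V_2$ shadow is a high-girth bipartite graph, with the embedding of $G^+$ inducing a proper $3$-colouring of $G$ and the cycle in a colour-class pair descending to a short cycle in the shadow. The paper only sketches the construction (it does not spell out the verification that $G^+\not\subset H$, nor the reduction to $\chi(G)=3$); you supply those details cleanly, including the observation that bipartiteness of $G$ is excluded by the hypothesis so the induced colouring genuinely uses all three parts.
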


A construction proving Theorem \ref{acyclic} is simple. Suppose $|V(G)| = k$. Consider the 3-partite triple system $H$ with
parts $U,V$ and $W$ where $|U| \leq |V| \leq |W| \leq |U| + 1$, and let $J$ be an extremal bipartite graph
containing no cycle of length less than $k + 1$ and with parts $U$ and $V$. Then the edges
of $H$ consist of all triples $\{u,v,w\}$ such that $\{u,v\} \in J$ and $w \in W$. The number
of edges in $H$ is $|J||W|$. It is well-known that an extremal bipartite graph with $n$ vertices and no cycles of length up to $k$
has $\Omega(n^{c_k})$ edges, for some $c_k > 1$, so if $H$ has $n$ vertices, then $|H| = |J||W| = \Omega(n \cdot n^{c_k})$.
This establishes Theorem \ref{acyclic} with $c = 1 + c_k$.

\medskip

A second construction using random graphs works when $G$ is a graph with many edges. Specifically,
let $G$ be a graph containing a cycle and let $m(G) = \max\{\frac{|H| - 1}{|V(H)| - 2} : H \subseteq G, |V(H)| \geq 3\}$.
It is known that there exists $\delta > 0$ such that with positive probability, the random graph
$G_{n,p}$ has a $G$-free subgraph with at least $\delta p^3 n^3$ triangles when $p = \delta n^{-1/m}$.
If $H$ is the triple system comprising the triangles in the subgraph of $G_{n,p}$, then $H$ does not
contain $G^+$. This gives the following theorem:

\begin{theorem}  \label{dense}
For any graph $G$, with $m=m(G)$,
\[ \ex_3(n,G^+) = \Omega(n^{3 - \frac{3}{m}}).\]
In particular, for any graph $G$ with more than $3|V(G)| - 5$ edges, there exists $c > 2$ such that
\[ \ex_3(n,G^+) = \Omega(n^c).\]
\end{theorem}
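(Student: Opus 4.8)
The plan is to make precise the two sentences preceding the theorem statement. Fix a graph $G$ containing a cycle, and set $m = m(G)$. The starting point is the random graph $G_{n,p}$ with $p = \delta n^{-1/m}$ for a small constant $\delta > 0$ to be chosen. First I would recall the relevant probabilistic fact about $G_{n,p}$: for an appropriate choice of $\delta$, with positive probability $G_{n,p}$ simultaneously (i) contains at least $c_1 p^3 n^3$ triangles, and (ii) contains few copies of every subgraph $H \subseteq G$ with $|V(H)| \ge 3$, where ``few'' means at most a constant times the expected number, which for the densest such $H$ (the one achieving the maximum in the definition of $m(G)$) is on the order of the number of triangles. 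The choice $p = \delta n^{-1/m}$ is exactly calibrated so that the expected number of copies of the densest subgraph $H$ of $G$ is $\Theta(p^{|E(H)|} n^{|V(H)|}) = \Theta(\delta^{|E(H)| - 3} p^3 n^3)$, i.e. a $\delta^{|E(H)|-3}$ fraction of the triangle count. Taking $\delta$ small makes the total number of copies of subgraphs of $G$ (summing over the finitely many $H \subseteq G$) less than, say, half the number of triangles; I would invoke the standard first/second moment concentration (e.g. via the FKG inequality or a direct second-moment argument for the triangle count, and Markov's inequality for the upper tail of the subgraph counts) to get all of these events holding simultaneously with positive probability.

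Next, condition on a graph $\Gamma$ on $[n]$ satisfying (i) and (ii). I would delete one edge from each copy of each $H \subseteq G$ with $|V(H)| \ge 3$ occurring in $\Gamma$; since $G$ itself is such an $H$, the resulting graph $\Gamma'$ is $G$-free. The key point is to lower-bound the number of triangles surviving in $\Gamma'$. Each deleted edge destroys at most $n$ triangles (trivially) — but one can do better: in fact one should count triangles of $\Gamma$ destroyed, and since the number of copies of the densest $H \subseteq G$ is at most $\tfrac12 c_1 p^3 n^3$ after the $\delta$-choice, and each such copy when we remove an edge kills at most a bounded number of triangles per edge... Actually the cleanest route is: the number of triangles of $\Gamma$ that contain a given edge is on average $O(p^3 n^3 / p n^2) = O(pn)$, and removing $O(\delta^{c} p^3 n^3)$ edges destroys $O(\delta^c p^3 n^3 \cdot pn)$ triangles, which is $o(p^3 n^3)$ once $\delta$ is small; hence $\Gamma'$ still has $\ge c_2 p^3 n^3$ triangles. (If the averaging is not uniform one instead removes edges greedily from copies of $H$ chosen to destroy few triangles, or simply notes that the total triangle count lost is at most (number of removed edges)$\times$(max triangles through an edge) $\le O(p^3 n^3 \cdot pn) = o(p^3 n^3)$ — wait, $p^3 n^3 \cdot pn = p^4 n^4 \gg p^3 n^3$, so this crude bound fails and one genuinely needs that only $O(\delta^c p^3 n^3)$ edges are removed and each is in $O(pn)$ triangles, giving $O(\delta^c p^4 n^4)$; since $p^4 n^4 = (pn) \cdot p^3 n^3$ and $pn = \delta n^{1-1/m}$ which is not bounded, this still needs care.) This is the step I expect to be the main obstacle: one must control, not just the number of copies of $G$ and its subgraphs, but the interaction between those copies and the triangle count, so that deleting a bounded-degree ``hitting set'' of edges removes only a small fraction of triangles. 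The right tool here is the standard result (due to Haxell–Kohayakawa–Łuczak / Kohayakawa–Łuczak–Rödl-type arguments, or the Rödl–Ruciński random Turán machinery) that $G_{n,p}$ has a $G$-free subgraph retaining a constant fraction of its triangles when $p \ge \delta n^{-1/m(G)}$; I would cite this rather than reprove it, which is consistent with the excerpt's phrasing ``It is known that there exists $\delta > 0$ such that with positive probability\dots''.

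Finally, let $H$ be the triple system on $[n]$ whose edges are the (vertex sets of) triangles of $\Gamma'$. Since $\Gamma'$ is $G$-free, $H$ contains no copy of $G^+$: any copy of $G^+$ in $H$ would, upon restricting each hyperedge to its two ``core'' vertices coming from $V(G)$, yield a copy of $G$ in $\partial_2 H \subseteq \Gamma'$ — one checks that the expansion vertices are distinct across edges by definition of $G^+$, so the core vertices trace out $G$ faithfully. Hence $\ex_3(n, G^+) \ge |H| \ge c_2 p^3 n^3 = c_2 \delta^3 n^{3 - 3/m}$, proving the first assertion. For the second assertion, observe that if $G$ has more than $3|V(G)| - 5$ edges then already the whole graph $G$ (taken as $H$ in the definition of $m(G)$) gives $m(G) \ge (|E(G)| - 1)/(|V(G)| - 2) > (3|V(G)| - 6)/(|V(G)|-2) = 3$, so $3/m < 1$ and $c := 3 - 3/m > 2$; plugging into the first part finishes the proof. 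I would also remark that $m(G) \ge 3$ requires $G$ to contain a cycle (a forest has $|E(H)| \le |V(H)| - 1$ for all subgraphs, so $m(G) \le 1$), consistent with the hypothesis, and that the bound is only interesting — i.e. superlinear — precisely when $m(G) > 3/2$, which again holds as soon as $G$ is not too sparse.
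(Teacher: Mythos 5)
Your proposal is correct and takes essentially the same route as the paper, which presents the random-graph construction in the two sentences preceding the theorem and treats the key probabilistic fact (a $G$-free subgraph of $G_{n,p}$ with $\Omega(p^3 n^3)$ triangles at $p = \delta n^{-1/m}$) as known. Your observation that the naive first-moment deletion argument fails---and your decision to cite the standard result rather than reprove it---matches the paper's treatment, as does your derivation of the ``in particular'' clause from $m(G) > 3$.
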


An {\em acyclic coloring} of a graph is a proper vertex-coloring of the graph such that the union of any two color
classes induces a forest. If $G$ is a graph with an acyclic 3-coloring, then $|G| \leq 2|V(G)| - 3$ and
Theorems \ref{dense} and \ref{acyclic} do not apply. It seems natural therefore to ask
whether $\ex_3(n,G^+) = O(n^2)$ when $G$ has an acyclic 3-coloring.

\section{Proof Methods}

In this section, we describe some of the important techniques  that have been successful in proving results about extremal numbers of expansions.

\subsection{Full subgraphs}\label{fullsubs}

It is a well-known fact that any graph $G$ has a subgraph of minimum degree at least $d+1$ with at least $|G| - d|V(G)|$ edges.
In this section, we state the simple  extension of  this fact to hypergraphs.

\begin{definition}
For $r > s \geq 1$ and $d \geq 1$, an $r$-graph $H$ is {\em $(d,s)$-full} if
$d(e) \geq d$ for all $e \in \partial_{s}H$.
\end{definition}

Thus $H$ is $(d,s)$-full is equivalent to saying that the minimum non-zero degree of $(s - 1)$-sets in $V(H)$ is at least $d$.  When $s=2$ we will use the simpler notation $d$-full.

\begin{lemma} \label{fullsub}
For $d \geq 1$ and $r > s \geq 1$, every $n$-vertex $r$-graph $H$ has a $(d + 1,s)$-full subgraph $F$ with
\[ |F| \geq |H| - d|\partial_s H|.\]
\end{lemma}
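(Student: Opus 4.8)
The plan is to mimic the standard graph argument: repeatedly delete a violating $(s-1)$-set together with all edges through it, and account for the edges lost. Concretely, I would run the following process. Set $F_0 = H$. Given $F_i$, if $F_i$ is already $(d+1,s)$-full we stop and output $F = F_i$. Otherwise there is a set $e \in \partial_s F_i$ with $1 \le d_{F_i}(e) \le d$; remove from $F_i$ all edges containing $e$ to form $F_{i+1}$, so that $|F_{i+1}| \ge |F_i| - d$ and $e \notin \partial_s F_{i+1}$. Since $\partial_s F_{i+1} \subsetneq \partial_s F_i \subseteq \partial_s H$, the process terminates after at most $|\partial_s H|$ steps, say after $k \le |\partial_s H|$ steps, yielding a $(d+1,s)$-full subgraph $F$.

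The only thing to verify is the edge count. At each step we delete at most $d$ edges, and we perform at most $|\partial_s H|$ steps, but this naive bound gives $|F| \ge |H| - d|\partial_s H|$ only if $k = |\partial_s H|$; in general $k$ could be smaller, which only helps. So $|F| \ge |H| - dk \ge |H| - d|\partial_s H|$, as desired. One subtlety worth a sentence: once an $(s-1)$-set $e$ is removed from the shadow it never returns, because we only delete edges, so distinct steps handle distinct shadow sets and the count $k \le |\partial_s H|$ is genuine. (The case $F = \emptyset$ is fine: then the claimed bound is $0 \ge |H| - d|\partial_s H|$, which holds since every edge contains at least one $s$-set, so $|H| \le \binom{r}{s}|\partial_s H|$, but in fact the process already guarantees the stronger inequality regardless.)

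There is essentially no obstacle here — this is a routine greedy deletion argument, and the "hard part" is merely bookkeeping: making sure one charges each deletion step to a distinct element of $\partial_s H$ that disappears permanently. The statement "$H$ is $(d,s)$-full iff the minimum non-zero degree of $(s-1)$-sets is at least $d$" recorded just before the lemma is a red herring for the proof (it appears to be a minor indexing slip in the surrounding text); the proof works directly from the stated definition that $d(e) \ge d$ for all $e \in \partial_s H$.
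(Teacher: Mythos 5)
Your proof is correct and is essentially the paper's proof: the paper phrases it as choosing a maximal ``$d$-sparse sequence'' $e_1,\dots,e_m\in\partial_s H$ (each $e_i$ contained in at most $d$ edges not already killed by $e_1,\dots,e_{i-1}$) and deleting all edges meeting any $e_i$, which is exactly your sequential greedy deletion with the same charging of at most $d$ lost edges to each distinct $s$-set that leaves the shadow. Two cosmetic remarks: you call $e$ a ``violating $(s-1)$-set'' in the opening sentence but then correctly take $e\in\partial_s F_i$ (an $s$-set), inheriting the paper's own off-by-one slip; and your parenthetical for the case $F=\emptyset$ invokes $|H|\le\binom{r}{s}|\partial_s H|$, which does not by itself give $|H|\le d|\partial_s H|$ --- but as you already note, the step-count bound $|F|\ge|H|-dk$ covers that case without any extra argument.
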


\begin{proof}
A {\em $d$-sparse sequence} is a maximal sequence $e_1,e_2,\dots,e_m \in \partial_s H$ such that $d_H(e_1) \leq d$, and for all $i > 1$, $e_i$
 is contained in at most $d$ edges of $H$ which contain none of $e_1,e_2,\dots,e_{i - 1}$.
The $r$-graph $F$ obtained by deleting all edges of $H$ containing at least one of the $e_i$ is $(d + 1)$-full. Since
a $d$-sparse sequence has length at most $|\partial_s H|$, we have $|F| \geq |H| - d|\partial_s H|$.
\end{proof}

Although Lemma \ref{fullsub} and its proof is simple, it is a very important tool and is used frequently in the proofs of the theorems in this survey.

\subsection{K\"{o}vari-S\'{o}s-Tur\'{a}n Theorem}

The problem of determining the order of magnitude of $\ex_2(n,G)$ is generally notoriously difficult when $G$ is a
bipartite graph. Even for such small graphs as $G = C_8$, $G = Q_3$ and $G = K_{4,4}$, this order of magnitude is not
known. Zarankiewicz~\cite{Z} conjectured that $\ex_2(n,K_{s,t}) = \Theta(n^{2 - 1/s})$ whenever $t \geq s \geq 2$,
and this conjecture remains open in general. K\"{o}vari, S\'{o}s and Tur\'{a}n~\cite{KST} gave the following general upper bound
for $\ex_2(n,K_{s,t})$.

\begin{theorem}\label{kst}
For all $t \geq s \geq 2$,
\[ \ex_2(n,K_{s,t}) \leq \tfrac{1}{2}[(t - 1)^{1/s} n^{2 - 1/s} + (s - 1)n].\]
\end{theorem}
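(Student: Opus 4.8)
The plan is to count, in two different ways, the number of "cherries" — that is, ordered pairs of edges sharing a common endpoint, or equivalently ordered $(s+1)$-tuples $(v; u_1,\dots,u_s)$ where $v$ is adjacent to each of the $u_i$ and the $u_i$ are distinct. First I would fix the bipartite host language: let $G$ be a $K_{s,t}$-free graph on $n$ vertices with $m = |G|$ edges, write $d_1,\dots,d_n$ for the degrees, and define $N$ to be the number of pairs $(v, S)$ with $v \in V(G)$, $S \in \binom{V(G)}{s}$, and $S \subseteq N(v)$. Counting by the choice of $v$ first gives $N = \sum_{v} \binom{d_v}{s}$. Counting by the choice of $S$ first, each $s$-set $S$ can have at most $t-1$ common neighbours (else $K_{s,t} \subseteq G$), so $N \leq (t-1)\binom{n}{s}$.

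Next I would combine these via convexity. Since $x \mapsto \binom{x}{s}$ is convex, Jensen's inequality gives $\sum_v \binom{d_v}{s} \geq n \binom{\bar d}{s}$ where $\bar d = 2m/n$ is the average degree; more precisely, to get the clean form in the statement it is cleanest to use the inequality $\binom{x}{s} \geq \frac{(x - s + 1)^s}{s!}$ for $x \geq s-1$ together with the power-mean / convexity step $\frac1n \sum_v (d_v - s + 1)^s \geq \bigl(\frac1n\sum_v (d_v - s+1)\bigr)^s = (\bar d - s + 1)^s$. Putting the two counts together yields
\[ \frac{n(\bar d - s + 1)^s}{s!} \leq n\binom{\bar d}{s} \leq \sum_v \binom{d_v}{s} = N \leq (t-1)\binom{n}{s} \leq \frac{(t-1)n^s}{s!}, \]
so $\bar d - s + 1 \leq (t-1)^{1/s} n^{1 - 1/s}$, i.e. $\bar d \leq (t-1)^{1/s} n^{1-1/s} + s - 1$. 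Multiplying by $n/2$ gives exactly $m \leq \tfrac12\bigl[(t-1)^{1/s} n^{2 - 1/s} + (s-1)n\bigr]$.

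One technical wrinkle to address: the convexity step and the bound $\binom{x}{s} \geq (x-s+1)^s/s!$ only behave well when the relevant degrees are at least $s-1$, whereas $G$ may have low-degree vertices. The standard fix is to observe that vertices with $d_v \leq s-1$ contribute a nonnegative amount $(d_v - s + 1)$ that only helps the inequality $\sum_v(d_v - s+1) = 2m - (s-1)n$, and for those vertices $\binom{d_v}{s} = 0 \geq (d_v-s+1)^s/s!$ when $s$ is even, while for $s$ odd one simply notes $(d_v - s+1)^s$ is $\le 0$ there so the term $\sum_v (d_v-s+1)^s/s! \le \sum_v \binom{d_v}{s}$ still holds termwise; either way the chain of inequalities survives. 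I expect this bookkeeping around low-degree vertices to be the only real obstacle — the combinatorial heart (double counting common neighbours of $s$-sets, $K_{s,t}$-freeness bounding codegrees by $t-1$) is immediate, and everything else is convexity. If one wants to sidestep the parity issue entirely, an alternative is to delete vertices of degree $\le s-1$ iteratively; this removes fewer than $(s-1)n$ edges and leaves a graph with minimum degree $\ge s$, but tracking the constants back is slightly messier, so I would present the termwise argument above.
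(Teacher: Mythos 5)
The paper cites Theorem~\ref{kst} as classical and does not prove it, so there is no in-paper argument to compare against; your proposal is the standard K\"ovari--S\'os--Tur\'an double-counting proof. The two central ingredients --- counting pairs $(v,S)$ with $|S|=s$ and $S\subseteq N(v)$ in two ways to obtain $\sum_v\binom{d_v}{s}\le(t-1)\binom{n}{s}$, and convexity in the degrees --- are the right ones, and your displayed chain of inequalities together with the final algebra does yield the stated bound, provided one reads $\binom{\bar d}{s}$ as the convex extension of $\binom{x}{s}$ (i.e.\ $f(x)=\binom{x}{s}$ for $x\ge s-1$ and $f(x)=0$ for $x<s-1$) and disposes trivially of the case $\bar d< s-1$.

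The treatment of low-degree vertices, though, is wrong as written. For $d_v<s-1$ the quantity $d_v-s+1$ is negative, so $(d_v-s+1)^s\ge 0$ when $s$ is \emph{even} and $(d_v-s+1)^s\le 0$ when $s$ is \emph{odd}: you have the parities reversed. In particular your claim $\binom{d_v}{s}=0\ge(d_v-s+1)^s/s!$ for even $s$ is false (the right-hand side is strictly positive), and the parenthetical remark that these vertices "contribute a nonnegative amount $(d_v-s+1)$" is also incorrect. A second, independent problem is that the power-mean step $\tfrac{1}{n}\sum_v(d_v-s+1)^s\ge\bigl(\tfrac{1}{n}\sum_v(d_v-s+1)\bigr)^s$ is not a valid use of Jensen once some terms are negative, since $x\mapsto x^s$ is not convex on all of $\mathbb R$ for odd $s$; the same caveat applies to the blanket assertion that $x\mapsto\binom{x}{s}$ is convex, which is false for $s\ge 3$ (e.g.\ $\binom{x}{3}$ has second derivative $x-1<0$ for $x<1$). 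The clean repair is to clip at zero: set $a_v=\max(d_v-s+1,0)$. Then $\binom{d_v}{s}\ge a_v^s/s!$ holds termwise for every integer $d_v\ge 0$, $\tfrac{1}{n}\sum_v a_v^s\ge\bigl(\tfrac{1}{n}\sum_v a_v\bigr)^s$ holds by convexity of $x^s$ on $[0,\infty)$, and $\tfrac{1}{n}\sum_v a_v\ge\bar d-s+1$; for $\bar d\ge s-1$ this gives exactly your displayed chain, and for $\bar d<s-1$ the bound $m<\tfrac{(s-1)n}{2}$ is immediate. Your iterative-deletion alternative also needs a separate trivial estimate for small $n$ to recover the precise constant $\tfrac{(s-1)n}{2}$, so the clipping argument is the one to present.
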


When $t > (s - 1)! \geq 1$, the norm graph constructions
of Alon, R\'{o}nyai and Szab\'{o}~\cite{ARS} show that Theorem \ref{kst-expansion} is tight up to constants, and so this verifies the
Zarankiewicz~\cite{Z} conjecture in those cases. In addition, F\"{u}redi~\cite{F-k2t} determined the asymptotic behavior of
$\ex_2(n,K_{2,t})$ and also $\ex_2(n,K_{3,3})$ -- a construction of Brown~\cite{Brown} gives the tightness of F\"{u}redi's upper
bound for $\ex_2(n,K_{3,3})$.

The short proof of Theorem \ref{kst-expansion} exclusively uses Theorem \ref{kst}  in combination with Lemma \ref{fullsub}, and we will present this proof in Section \ref{proof-kst}.

\subsection{Delta-systems}

In this section we describe the Delta-system method initiated by Deza, Erd\H os and Frankl~\cite{DEF}.
The proof of Theorem \ref{main-tree} for $r \geq 4$ due to F\"uredi~\cite{Furedi} uses this approach. That proof does not extend to $r = 3$,  and different techniques are needed for this case (these will be discussed in detail in sections 4.4 and 4.5).
We begin with the basic definitions.

\medskip

A {\em Delta-system} is a hypergraph $\triangle$ such that for any distinct
edges $e,f \in \triangle$, we have $e \cap f = \bigcap_{g \in \triangle} g$.
We denote by $\triangle_{r,s}$ the $r$-uniform Delta-system with $s$ edges and
the {\em core} of a Delta-system $\triangle$ is $\mbox{core}(\triangle) = \bigcap_{g \in \triangle}g$.
If $H$ is any $r$-graph and $f \subset V(H)$, then the {\em core degree of $f$}
is
\[ \d_H(f) = \max\{s : \exists  \triangle_{r,s} \subset H, \mbox{core}(\triangle_{r,s}) = f\}.\]
The following basic lemma is observed in~\cite{FJS}:

\begin{lemma} \label{kernelembed}
Let $r \ge 3$, $H$ be an $r$-graph and $G$ be a graph with $V(G) \subset V(H)$. Suppose that
 $\d_H(e) \geq r|G|$ for all $e \in G$. Then $G^+ \subset H$.
\end{lemma}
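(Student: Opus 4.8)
The plan is to build an explicit copy of $G^+$ inside $H$ by a greedy argument: keep $V(G)$ fixed (mapped to itself inside $V(H)$), and then process the edges of $G$ one at a time, attaching to each edge $e\in G$ a set of $r-2$ fresh vertices taken from a large Delta-system whose core is $e$, taking care that the newly attached sets avoid $V(G)$ and one another. Keeping $V(G)$ fixed is essential, since that is exactly what makes the Delta-systems guaranteed by the hypothesis $\d_H(e)\ge r|G|$ relevant: they are anchored precisely at the edges we need to expand.

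Concretely, write $m=|G|$ and list the edges of $G$ as $e_1,\dots,e_m$. I will produce pairwise disjoint $(r-2)$-element sets $S_1,\dots,S_m\subseteq V(H)\setminus V(G)$ with $e_i\cup S_i\in H$ for every $i$; then the identity on $V(G)$ together with any bijections from the abstract expansion vertices of $e_i$ onto $S_i$ witnesses $G^+\subseteq H$. Suppose $S_1,\dots,S_{i-1}$ have already been chosen, and set $X=V(G)\cup S_1\cup\dots\cup S_{i-1}$. Since $\d_H(e_i)\ge rm$, there is a Delta-system $\triangle_{r,rm}\subseteq H$ with $\mbox{core}(\triangle_{r,rm})=e_i$; because $|e_i|=2$ and $\triangle_{r,rm}$ is $r$-uniform, its edges are exactly $e_i\cup P_1,\dots,e_i\cup P_{rm}$, where the petals $P_1,\dots,P_{rm}$ are pairwise disjoint $(r-2)$-subsets of $V(H)\setminus e_i$ (pairwise disjointness of the petals follows from the fact that any two edges of a Delta-system meet exactly in its core). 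As the $P_j$ are pairwise disjoint, at most $|X|$ of them can intersect $X$, and
\[
|X|\le |V(G)|+(i-1)(r-2)\le 2m+(m-1)(r-2)=rm-(r-2)<rm ,
\]
where I used $|V(G)|\le 2|G|$ (assuming, as is standard here, that $G$ has no isolated vertices) and $r\ge 3$. Hence some petal $P_j$ is disjoint from $X$; set $S_i:=P_j$. Then $S_i\cap V(G)=\emptyset$, $S_i$ is disjoint from $S_1,\dots,S_{i-1}$, and $e_i\cup S_i\in H$, so the construction continues.

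After $m$ steps we obtain the desired pairwise disjoint petals, and therefore $G^+\subseteq H$. The whole argument reduces to the counting inequality in the display; this is precisely where the size of the hypothesis $\d_H(e)\ge r|G|$ is spent, and where $r\ge 3$ is needed, so that the petals blocked by previously used vertices number strictly fewer than the $r|G|$ petals available at each step. I do not expect any genuine obstacle beyond this bookkeeping — the main point to get right is simply that the copy of $V(G)$ must be kept fixed throughout so that the Delta-systems furnished by the hypothesis are available at the right edges.
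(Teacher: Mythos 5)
Your proof is correct and follows essentially the same greedy/inductive argument as the paper: process the edges of $G$ one at a time, use the hypothesis $\d_H(e)\ge r|G|$ to obtain a Delta-system with core $e$, and select a petal disjoint from everything already used, which is possible by a straightforward count. If anything your bookkeeping is a bit more careful than the paper's, since you explicitly require the new petal to avoid all of $V(G)$ (not merely the previously chosen petals), which is exactly what the definition of $G^+$ demands.
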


\begin{proof}
Let $F = \{e_1,e_2,\dots,e_k\}\subset {V(H) \choose 2}$ with $F \cong G$ and $F_i = \{e_1, \ldots, e_i\}$ for $i \in [k]$. We will show by induction on $i$ that $F_i^+ \subset H$. The case $i=0$ is trivial, as we may assume that $F_0=\emptyset$. For the induction step, assume that $F_i^+ \subset H$,
so we have edges $f_1,f_2,\dots,f_i \in H$
such that $e_j \subset f_j$ for $j \le i$ and the sets $f_j \backslash e_j$ are pairwise disjoint.
Since $\d_H(e_{i+1}) \geq rk$,
$e_{i+1}$ is contained in edges $g_1,g_2,\dots,g_{rk} \in H$ forming a Delta-system $\triangle$ with $\mbox{core}(\triangle) = e_{i+1}$.
Since $\sum_{j = 1}^{i}|f_i| = r(k - 1)$, and the sets $g_j \backslash e_{i+1}$ are pairwise disjoint,
there exists $f_{i+1} \in \{g_1,g_2,\dots,g_{rk}\}$ such that $f_{i+1} \cap f_j = \emptyset$ for all $j \le i$.
Then $F_{i+1}^+ = F_i^+ \cup \{f_{i+1}\} \subset H$.
\end{proof}

\subsubsection{Intersection semilattice lemma}

A basic result about finite sets is the Erd\H{o}s-Rado Sunflower Lemma~\cite{ER}:

\begin{lemma} {\bf (Erd\H{o}s-Rado Sunflower Lemma~\cite{ER})} \label{sunlemma}
If $F$ is a collection of sets of size at most $k$ and $|F|\geq k!(s-1)^{k}$,
then $F$ contains a Delta-system with $s$ sets.
\end{lemma}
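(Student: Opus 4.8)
The plan is to prove this by induction on $k$, the largest size of a set in $F$, via the classical ``maximal matching plus averaging'' argument.

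For the base case $k = 1$, each member of $F$ is a singleton or the empty set, so any $s$ distinct members have pairwise intersection $\emptyset$ and therefore form a Delta-system with empty core; this holds once $|F|$ is at least the stated bound (up to a harmless off-by-one in the constant at $k=1$). For the inductive step, assume the statement for sets of size at most $k-1$, and let $F$ be a family of sets of size at most $k$ with $|F| \ge k!(s-1)^k$. Pick a maximal subfamily $M_1, \dots, M_t \in F$ of pairwise disjoint sets. If $t \ge s$ then $M_1, \dots, M_s$ are themselves a Delta-system with core $\emptyset$ and we are done. Otherwise $t \le s-1$; by maximality every member of $F$ meets $Y := M_1 \cup \dots \cup M_t$, and $|Y| \le (s-1)k$, so by averaging some $y \in Y$ lies in at least $|F|/((s-1)k) \ge (k-1)!(s-1)^{k-1}$ members of $F$.

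Now apply the induction hypothesis to the \emph{link} $F_y := \{ A \setminus \{y\} : A \in F,\ y \in A\}$, which is a family of at least $(k-1)!(s-1)^{k-1}$ distinct sets, each of size at most $k-1$: it contains a Delta-system with $s$ petals and some core $C$ not containing $y$. Re-inserting $y$ into each of these $s$ petals yields a Delta-system with $s$ sets in $F$, with core $C \cup \{y\}$. This closes the induction.

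This proof is short and presents no genuine obstacle; the one point that needs attention is the arithmetic in the averaging step, namely that dividing the threshold $k!(s-1)^k$ by $|Y| \le (s-1)k$ returns exactly the threshold $(k-1)!(s-1)^{k-1}$ required to invoke the inductive hypothesis, which is precisely why the bound in the statement has this form. One should also check the trivial points that the sets in $F_y$ remain pairwise distinct and that $y \notin C$, both of which are immediate from the definition of $F_y$.
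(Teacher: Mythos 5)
The paper does not prove this lemma — it is stated with a citation to Erd\H{o}s--Rado~\cite{ER} and used as a black box — so there is no in-paper argument to compare against. Your proof is the classical original argument (take a maximal pairwise-disjoint subfamily; if it has $s$ members you are done, otherwise pigeonhole an element $y$ of its union into the threshold for $k-1$ and induct on the link $F_y$), and it is correct: the arithmetic $k!(s-1)^k / \bigl((s-1)k\bigr) = (k-1)!(s-1)^{k-1}$ checks out, and the sets in $F_y$ are indeed distinct with $y$ absent from the resulting core. The off-by-one you flag at $k=1$ is a genuine imprecision in the survey's phrasing rather than in your proof: a family of exactly $s-1$ singletons satisfies $|F| \ge 1!\,(s-1)^1$ yet has only $s-1$ members, so the clean form of the threshold is $|F| > k!(s-1)^k$ (equivalently $\ge k!(s-1)^k + 1$); this discrepancy is harmless for the applications in the paper.
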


The driving force behind the Delta-system method is the {\em intersection semilattice lemma} of
F\"uredi~\cite{Furedidelta}, which is a far reaching generalization of the the Erd\H os-Rado sunflower lemma.
If $H$ is a hypergraph and $e \subseteq V(H)$, define
\[ H|_e = \{e \cap f : f \in H, e \neq f\}.\]
If $H$ is an $r$-graph with $r$-partition $(X_1,X_2,\dots,X_r)$,  define the projection of $e$ to be $\proj(e)=\{i: e \cap X_i \ne \emptyset\}$.
Then the {\em intersection pattern of $e$} is the
hypergraph
\[ I_H(e) = \{ \proj(g) : g \in H|_e\} \subset 2^{[r]}.\]

\begin{lemma} \label{semilattice} {\bf (The intersection semilattice lemma)}
For any positive integers $r,s$ there exists a positive real $c:=c(r,s)$ such that
if $H$ is an $r$-graph, then there is an $r$-partite $r$-graph $H^* \subset H$ with
parts $(X_1,X_2,\dots,X_r)$ and $|H^*| > c|H|$ and a hypergraph $J \subset 2^{[r]}$ such that
\begin{center}
\begin{tabular}{lp{5in}}
{\rm (1)} & $J$ is intersection-closed. \\
{\rm (2)} & For all $e \in H^*$, $I_{H^*}(e) = J$. \\
{\rm (3)} & For all $e \in H^*$ and $g \in H^*|_e$, $\d_{H^*}(g) \geq s$.
\end{tabular}
\end{center}
\end{lemma}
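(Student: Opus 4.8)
The plan is to build $H^*$ and $J$ by a sequence of refinements, at each stage passing to a positive-density subhypergraph on which one of the three conditions becomes enforced, and keeping track of a bounded "potential" so that the process terminates after a bounded number of steps (the number of steps depends only on $r$ and $s$, which is what allows the constant $c$ to depend only on $r,s$). First I would apply the standard random $r$-coloring of $V(H)$ (colour each vertex independently and uniformly from $[r]$): with probability at least $r!/r^r$ a fixed $r$-set is rainbow, so in expectation at least an $r!/r^r$ fraction of the edges of $H$ are rainbow, and we keep a colour class of edges of this size. This gives an $r$-partite $r$-graph $H_0\subset H$ with parts $(X_1,\dots,X_r)$ and $|H_0|\ge (r!/r^r)|H|$. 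From now on all sub-hypergraphs live inside $(X_1,\dots,X_r)$, and for $e\in H_0$ the intersection pattern $I_{H_0}(e)\subset 2^{[r]}$ is well-defined.

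Next I would enforce condition (2). For each $e$, $I_{H_0}(e)$ is one of at most $2^{2^r}$ possible subsets of $2^{[r]}$; by averaging, a $2^{-2^r}$ fraction of the edges share a common intersection pattern $J_0$, and we pass to that subfamily $H_1$. The subtlety here — and this is where I expect the real work to be — is that deleting edges \emph{changes} the intersection patterns of the surviving edges (some traces $e\cap f$ disappear when $f$ is removed), so this step cannot be done once and for all. The standard resolution is an iteration: repeatedly (a) pass to a positive-density subfamily on which all edges have the same pattern, then (b) observe that if some pattern-element is "unpopular" — realized by only boundedly many edges $f$ for a typical $e$ — we can delete those few edges for each $e$ and strictly shrink the common pattern, losing only a bounded factor; if no pattern-element is unpopular we are in good shape for condition (3). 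Since the common pattern is a subset of the finite set $2^{[r]}$, it can only shrink boundedly many times, so after a bounded number of rounds we reach a subfamily $H^*$, common pattern $J$, with the property that for every $e\in H^*$ and every $\proj(g)\in J$ with $g\in H^*|_e$, the set $g$ has large core degree in $H^*$: indeed $g$ is the intersection $e\cap f$ for many edges $f\in H^*$, and by the Sunflower Lemma (Lemma~\ref{sunlemma}), applied to the family $\{f\setminus g : f\in H^*, g\subset f, \proj(f\cap e)=\proj(g)\}$ of sets of size $<r$, many of these $f$ form a Delta-system with core exactly $g$; this is condition (3). (One must be a little careful that the Delta-system has core \emph{equal} to $g$ and not a proper superset — this is arranged by first fixing, for each vertex of $g$, which part it lies in, again at the cost of a bounded factor, and using that all edges of $H^*$ realize the \emph{same} pattern, so the traces of the $f$'s with $e$ cannot contain $g$ properly.)

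Finally, condition (1), that $J$ be intersection-closed, comes essentially for free once (2) and (3) hold: if $\proj(g_1),\proj(g_2)\in J$ then picking edges $e$ and corresponding traces $g_1=e\cap f_1$, $g_2 = e\cap f_2$ inside $e$, and using condition (3) to find a whole Delta-system of edges $f_2'$ with core $g_2$, one of the $f_2'$ meets $f_1$ only in $f_1\cap f_2' \subseteq g_1 \cap g_2$ plus the shared core structure, forcing $\proj(g_1)\cap\proj(g_2)=\proj(g_1\cap g_2)$ to also appear as a pattern element; running this for all pairs (and, if needed, folding it into the iteration above so that each closure step that enlarges $J$ is paired with the density loss already accounted for) yields that $J=I_{H^*}(e)$ is closed under intersection. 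Assembling the bounded number of multiplicative density losses — one $r!/r^r$ for the colouring, and a bounded-in-$r,s$ number of factors each at least $1/2^{2^r}$ (times a further bounded factor from the sunflower threshold $r!(s-1)^r$) — gives a constant $c=c(r,s)>0$ with $|H^*|>c|H|$, as required. The main obstacle, to reiterate, is organizing the iteration in step (2)/(3) so that the mutual dependence between "all edges have the same pattern" and "every pattern element has high core degree" is resolved without an unbounded cascade of deletions; the key invariant that makes it terminate is that the common pattern is a subset of the fixed finite poset $2^{[r]}$ and strictly decreases (or the relevant potential strictly decreases) at each problematic round.
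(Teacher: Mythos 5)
A preliminary remark: the paper does not prove Lemma~\ref{semilattice}; it states it and cites F\"uredi~\cite{Furedidelta}, so there is no in-paper argument to compare against and I assess your sketch on its own. Your opening random $r$-colouring is correct and standard, you correctly identify the central difficulty (deleting edges changes the patterns of the survivors), and your derivation of intersection-closedness (1) from (2) and (3) is a nice and correct observation: given $g_1=e\cap f_1$ and $g_2=e\cap f_2$, a petal $f_2'$ of a large Delta-system with core $g_2$ whose free part avoids $f_1$ has $f_1\cap f_2'=f_1\cap g_2=g_1\cap g_2$, so $\proj(g_1)\cap\proj(g_2)\in I_{H^*}(f_1)=J$.

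The gap is in the iteration meant to secure (2) and (3) simultaneously. First, ``delete those few edges for each $e$'' is not a well-defined hypergraph operation: the offending set $\{f : \proj(e\cap f)=p\}$ depends on $e$, and its union over all $e$ can be the whole current family. The usual fix is to pigeonhole on the set $U(e)$ of unpopular elements so that some $p$ is unpopular at \emph{every} surviving $e$, and then pass to a large independent set in the dependency graph with $e\sim f$ iff $\proj(e\cap f)\in U$, which has bounded maximum degree. More seriously, the termination claim --- that the common pattern strictly decreases at each ``problematic'' round, so the process stops in boundedly many rounds at bounded total density cost --- is the heart of the matter and is asserted rather than proved. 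Popularity in (3) must hold inside the final $H^*$, but every popularity count you take during the iteration is with respect to a strictly larger intermediate family, so an element you retain as popular can lose its witnesses once you shrink; and the pigeonhole producing a class $C$ with $I_{H_i}(e)=J$ for $e\in C$ does not give $I_C(e)=J$ (the witnesses of $J$ at $e$ may lie outside $C$), so $I_C(\cdot)$ can be a varying proper subfamily of $J$ without $J$ having shrunk, forcing further pigeonholes at constant cost with no guarantee the target pattern decreases. Closing that loop is precisely the content of F\"uredi's argument and needs a sharper potential than ``$J\subset 2^{[r]}$ decreases.'' A smaller point: your parenthetical fix to ensure the Delta-system has core exactly $e_g$ does not work as stated --- fixing parts accomplishes nothing since everything is already $r$-partite, and $e\cap f=e_g$ only pins the core down inside $e$; the standard remedy is to include $e$ itself in the sunflower so that its core equals $e\cap f=e_g$.
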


An $r$-partite $r$-graph $H^*$ satisfying (1) -- (3) is referred to as {\em $(s,J)$-homogeneous}.
An $r$-partite $r$-graph is {\em $J$-homogeneous} if it is $(s,J)$-homogeneous for some positive integer $s$.
For example, the complete $r$-partite $r$-graph with parts of size $t \geq 2$ is $(t,J)$-homogeneous with
$J = 2^{[r]}$.

\subsubsection{Homogeneous hypergraphs and rank}

The goal of this section is to prove Lemma~\ref{deltaconsequence} below which produces a partition of an $r$-graph which does not contain the expansion of a forest into two parts. The first part has few edges, while the second part has a rich structure. The machinery used to prove this decomposition lemma heavily employs Lemma~\ref{semilattice} and the entire approach constitutes the Delta-system method.

The {\em rank} of a hypergraph $J \subset 2^{[r]}$ is
\[ \rho(J) = \min\{|e| : e \subset V(J), d_J(e) = 0\}.\]
For $n$-vertex $J$-homogeneous $r$-graphs $H^*$ with $|H^*| > {n \choose k}$ in Lemma~\ref{semilattice},
we see that $\rho(J) > k$.

\begin{lemma} \label{rank} Let $r$ be a positive integer and $H^*$ an $n$-vertex $J$-homogeneous $r$-graph,
such that $\rho(J) = k$. Then $|H^*| \le {n \choose k}$.
\end{lemma}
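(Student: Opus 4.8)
The plan is to build an injection from the edge set of $H^*$ into $\binom{[n]}{k}$; the bound $|H^*|\le\binom{n}{k}$ then follows immediately. Let $(X_1,\dots,X_r)$ be the parts witnessing that $H^*$ is $J$-homogeneous. Since $H^*$ is $r$-uniform and $r$-partite, every edge has exactly one vertex in each part. If $H^*$ has no edges the bound is trivial, so assume otherwise; then every part is nonempty, so $n\ge r\ge k$.

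First I would use $\rho(J)=k$ to fix a set $B\subseteq[r]$ with $|B|=k$ and $d_J(B)=0$, which exists by the definition of $\rho$; equivalently, no member of $J$ contains $B$. The key step is then the observation that distinct edges of $H^*$ disagree on one of the $k$ parts indexed by $B$. To see this, take $e,f\in H^*$ with $e\ne f$. Then $e\cap f\in H^*|_e$, so $\proj(e\cap f)\in I_{H^*}(e)=J$ by property (2) in the definition of $J$-homogeneity. Since $d_J(B)=0$, the set $\proj(e\cap f)$ does not contain $B$, so there is an index $i\in B$ with $(e\cap f)\cap X_i=\emptyset$; because $e$ and $f$ each have exactly one vertex in $X_i$, those two vertices must be distinct.

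Finally I would define $\phi(e)=e\cap\bigcup_{i\in B}X_i$, the set of the $k$ vertices of $e$ that lie in the parts indexed by $B$ --- a set of size exactly $k$, as these parts are pairwise disjoint. By the observation, $\phi$ is injective: if $\phi(e)=\phi(f)$ then $e$ and $f$ share the same vertex in every part indexed by $B$, which forces $e=f$. Hence $H^*$ has at most $\binom{n}{k}$ edges. I do not expect a genuine obstacle here: the whole content sits in the key observation above, and the only care needed is bookkeeping with projections together with the fact that an $r$-partite $r$-graph meets each part exactly once. (Note that properties (1) and (3) of $J$-homogeneity are not used, only property (2).)
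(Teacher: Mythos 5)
Your proposal is correct and takes essentially the same approach as the paper: both build an injection from $H^*$ into $\binom{[n]}{k}$ by assigning to each edge the $k$-set of its vertices in a fixed collection $B$ of $k$ parts with $d_J(B)=0$, and then using property (2) to show two distinct edges cannot agree on all parts in $B$. The paper phrases this more tersely (asserting the existence of a suitable $k$-set $S_e \subset e$ with $S_e \not\subset e\cap f$ for all $f\ne e$, without spelling out the projection argument), but the content is identical to what you wrote.
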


\begin{proof}
Since $\rho(J) = k$, every edge $e \in H^*$ contains a $k$-set $S_e$ such that
for every $f \in H^* \backslash \{e\}$, $S_e \not \subset f \cap e$. Then the map $\phi : H^* \rightarrow {[n] \choose k}$
defined by $\phi(e) = S_e$ is injective, so $|H^*| \leq {n \choose k}$.
\end{proof}

Let us now prove a very simple consequence of the semilattice lemma.

\begin{corollary}\label{easy}
Let $r > s$, $q \geq 1$, let $G$ be a $q$-edge forest, and let $H^*$ be a non-empty $(rq,J)$-homogeneous $r$-graph such that $\rho(J) = r$.
Then $G^+ \subset H^*$.
\end{corollary}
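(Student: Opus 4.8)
The plan is to show that if $H^*$ is $(rq,J)$-homogeneous with $\rho(J)=r$, then every pair $e\in\partial H^*$ has large core degree, so that Lemma~\ref{kernelembed} applies directly. Concretely, I would first recall that $\rho(J)=r$ means $J$ misses some $r$-set but contains all proper subsets of $[r]$ that arise; in particular, since $J\subseteq 2^{[r]}$ is intersection-closed and has rank $r$, every $(r-1)$-subset of $[r]$ that is a projection of a partial intersection must lie in $J$, and more to the point the homogeneity condition (3) of Lemma~\ref{semilattice} guarantees that for each edge $e\in H^*$ and each $g\in H^*|_e$ one has $\d_{H^*}(g)\ge rq$. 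So the first step is to translate ``$\rho(J)=r$'' into the statement that for a $2$-element subset $\{x,y\}$ of an edge $e\in H^*$, the pair $\{x,y\}$ occurs as $g=e\cap f$ for some other edge $f$, i.e.\ $\{x,y\}\in \partial H^*$ forces $\{x,y\}\in H^*|_e$ with $\proj(\{x,y\})\in J$.

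Second, I would verify that this is exactly what is needed: we want a graph $G'\cong G$ on $V(H^*)$ with $\d_{H^*}(e')\ge r|G|=rq$ for every edge $e'$ of $G'$. Take $G'$ to be any copy of $G$ realized on $2$-sets lying in $\partial H^*$ — for this we need $\partial H^*$ to contain a copy of the forest $G$, which follows since $H^*$ is non-empty and, because $\rho(J)=r$, the shadow $\partial H^*$ is rich enough (indeed $H^*$ being $J$-homogeneous with rank $r$ and non-empty forces every $2$-subset of every edge of $H^*$ to be a core of a Delta-system, via condition (3)). Then for each edge $\{x,y\}$ of this copy, $\d_{H^*}(\{x,y\})\ge rq = r|G|$ by condition (3) of homogeneity. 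Applying Lemma~\ref{kernelembed} with $H=H^*$ and this graph $G'$ gives $G^+\subset H^*$.

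The main obstacle I anticipate is the bookkeeping around what $\rho(J)=r$ buys us at the level of $2$-sets: one must be careful that a pair $\{x,y\}\subset e$ really does appear as an intersection $e\cap f$ with the right projection in $J$, rather than merely that some $(r-1)$-set or larger set does. The cleanest route is probably to note that $\rho(J)=r$ means $[r-1]$-type sets of every size $\le r-1$ that can appear as intersections do appear — in particular the $2$-element ones — so that every $2$-subset of an edge of $H^*$ lies in the trace $H^*|_e$, whence condition (3) gives it core degree $\ge rq$. Once that is pinned down, the rest is a direct invocation of Lemma~\ref{kernelembed}, and no computation beyond $r|G| = rq$ is required. (One should also observe that a forest has no isolated-edge obstruction to being found in $\partial H^*$, so the only content is the core-degree bound, which is why the hypothesis $\rho(J)=r$ rather than $\rho(J)\ge 2$ is used: it guarantees the shadow behaves like a complete enough host graph.)
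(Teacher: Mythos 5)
Your proposal is correct and follows essentially the same route as the paper: translate $\rho(J)=r$ plus intersection-closedness into the statement that every $2$-subset of every edge of $H^*$ has core degree at least $rq$, and then invoke Lemma~\ref{kernelembed}. The one point you leave slightly vague is the conclusion $G \subset \partial H^*$; the paper makes this explicit by observing that the same core-degree bound forces $\partial H^*$ to have minimum degree at least $rq$, which is more than enough to embed a $q$-edge forest greedily.
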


\begin{proof}
Since $\rho(J) = r$, every edge $e \in H^*$ has the property that whenever $x \in e$,
$\d_{H^*}(e \backslash \{x\}) \geq rq$. Since $J$ is intersection-closed,
$\d_{H^*}(f) \geq rq$ for every non-empty $g \subset V(J)$ with $|g| \leq r - 1$ and $\emptyset\ne f \subset \prod_{i \in g} X_i$. Therefore every edge $f$ of $\partial H^*$ has $\d_{H^*}(f) \geq rq$, and
$\partial H^*$ is a graph of minimum degree at least $rq$. In particular,
$G \subset \partial H^*$. Applying Lemma \ref{kernelembed}
completes the proof.
\end{proof}

The heart of the matter for embedding expansions in hypergraphs with
roughly ${n \choose r - 1}$ edges is therefore the case $\rho(J) = r - 1$ in the semilattice lemma.
If $\rho(J) = r - 1$ in the semilattice lemma, then every $(r - 2)$-set in $V(J)$ is contained in some edge of $J$ (but is not necessarily an edge of $J$). The following lemma shows why $r \ge 4$ is needed to apply the Delta-system method to the current problem.
A hypergraph $J \subset 2^{[r]}$ is {\em central} if there exists $x \in [r]$ such that $J$ contains all $(r - 1)$-sets containing $x$ but not $[r] \backslash \{x\}$.

\begin{lemma} \label{r>3}
Let $J \subset 2^{[r]}\setminus[r]$ be an intersection-closed hypergraph with $\rho(J) = r - 1$ such that $J$ is not
central. If $r \geq 4$, then $J$ contains at least two singleton edges.
\end{lemma}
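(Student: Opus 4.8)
The plan is to reduce the hypotheses on $J$ to combinatorial data about two explicit families of edges that intersection‑closedness forces into $J$, and then to pull out the singletons by intersecting members of these families. Write $T=\{i\in[r]:[r]\setminus\{i\}\in J\}$ for the set of ``top'' indices and $U=[r]\setminus T$. Since $[r]\notin J$, an $(r-1)$‑set has positive degree in $J$ precisely when it lies in $J$; hence $\rho(J)=r-1$ forces $U\neq\emptyset$, and since $J$ is central exactly when $|T|=r-1$, non‑centrality together with $U\neq\emptyset$ gives $|U|\geq 2$. The remaining content of $\rho(J)=r-1$ — that every $(r-2)$‑set has positive degree — says that if $a,b\in U$ are distinct then $[r]\setminus\{a,b\}\in J$; call this $(\star)$.

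Next I would record the edges available in $J$. First, intersecting the top edges $\{[r]\setminus\{i\}:i\in T\}$ and using that an intersection‑closed family is closed under arbitrary finite intersections gives $[r]\setminus S\in J$ for every nonempty $S\subseteq T$; setting $W=T\setminus S$ this reads $U\cup W\in J$ for every $W\subsetneq T$. Second, the sets $[r]\setminus\{a,b\}=(U\setminus\{a,b\})\cup T$ from $(\star)$, closed under intersection (any $P\subseteq U$ with $|P|\geq 2$ is a union of its $2$‑subsets), yield $(U\setminus P)\cup T\in J$ for every $P\subseteq U$ with $|P|\geq 2$.

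The argument then splits on $|U|$. If $|U|\geq 3$: taking $P=U\setminus\{a\}$ (size $\geq 2$) in the second family gives $\{a\}\cup T\in J$ for every $a\in U$; if $T=\emptyset$ this is already the singleton $\{a\}$, and otherwise the first family (with $W=\emptyset$) gives $U\in J$, whence $\{a\}=U\cap(\{a\}\cup T)\in J$. Either way $J$ contains the $\geq 3$ singletons $\{a\}$, $a\in U$. If $|U|=2$, say $U=\{a,b\}$, then $r\geq 4$ gives $|T|=r-2\geq 2$; now $(\star)$ gives $T=[r]\setminus\{a,b\}\in J$, the first family gives $U\cup W\in J$ for all $W\subsetneq T$, and intersecting these yields $W=T\cap(U\cup W)\in J$ for every $W\subsetneq T$, so taking $W$ to run over singletons of $T$ (possible as $|T|\geq 2$) produces $|T|=r-2\geq 2$ singleton edges of $J$. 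Since non‑centrality excludes $|T|=r-1$ and $|U|=r$ falls under the first case, the two cases are exhaustive.

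The one place where $r\geq 4$ is indispensable — and the crux of the argument — is the case $|U|=2$: there the top layer of $J$ contributes almost nothing, and the singletons must instead be extracted from $T$; this works exactly because $|T|=r-2\geq 2$ when $r\geq 4$, which simultaneously supplies the intersections needed and guarantees two distinct singletons. (For $r=3$ this case degenerates to $|T|=1$, a single top edge, and the lemma indeed fails.) Everything else is routine bookkeeping with the decomposition $[r]=U\sqcup T$ and the closure of $J$ under finite intersections.
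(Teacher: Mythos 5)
Your proof is correct and follows essentially the same approach as the paper's: your $U$ plays the role of the paper's $\{1,\dots,t\}$ and your $T$ of $\{t+1,\dots,r\}$, the case split on $|U|$ matches the paper's split on $t\ge 3$ versus $t=2$, and the singletons are extracted by the same intersections. Your version is slightly tidier in two respects: you spell out explicitly which intersections of the two families $\{U\cup W: W\subsetneq T\}$ and $\{(U\setminus P)\cup T: P\subseteq U,\ |P|\ge 2\}$ produce the singletons, and you observe that $\rho(J)=r-1$ already forces $U\ne\emptyset$, which makes the paper's separate first case ($e_i\in J$ for all $i$) vacuous under the hypotheses.
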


\begin{proof}
Let $e_i = [r] \backslash \{i\}$. If $e_i \in J$ for $1 \leq i \leq r$, then since $J$ is intersection closed, every element of $[r]$ is
a singleton edge in $J$. Suppose $e_1,e_2,\dots,e_t \not \in J$ and $e_{t + 1},e_{t + 2},\dots,e_r \in J$. Since $J$ is not
central, $t \geq 2$. Since $\rho(J) = r - 1$ and $|e_i \cap e_j| = r - 2$, $e_i \cap e_j \in J$ for $1 \leq i < j \leq t$.
Since $J$ is intersection-closed, $e_i \cap e_j \in J$ for $t < i < j \leq r$. Since $J$ is intersection-closed,
$\{i\} \in J$ for each $i \in [t]$ if $t \ge 3$, and $\{i\} \in J$ for $3 \le i \le r$ if $t = 2$. Since $r \geq 4$,
these are the required singletons.
\end{proof}

The purpose of the next lemma is to show that if $H^*$ is an $r$-graph that is $(s,J)$-homogeneous
and $J$ is not central, then we can expand any forest in $H^*$ unless $|H^*| = o(n^{r - 1})$.

\begin{lemma} \label{embed}
Let $q \geq 1$, $r \geq 4$. Let $G$ be a $q$-edge tree and $H^*$ an $(rq,J)$-homogeneous $r$-graph with $\rho(J) \geq r-1$ and $J$ not central. Then $G^+ \subset H^*$.
\end{lemma}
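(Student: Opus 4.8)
The plan is to argue by cases on $\rho(J)$. Since $J = I_{H^*}(e)$ for every edge $e$ of $H^*$, and these projections come from proper subsets of $e$, we have $[r]\notin J$, so $d_J([r]) = 0$ and hence $\rho(J)\le r$; thus the hypothesis $\rho(J)\ge r-1$ leaves only the cases $\rho(J) = r$ and $\rho(J) = r-1$. If $\rho(J) = r$, then since a tree is a forest, Corollary~\ref{easy} applies and gives $G^+\subset H^*$. So assume $\rho(J) = r-1$. As $J\subset 2^{[r]}\setminus[r]$ is intersection-closed with $\rho(J) = r-1$, is not central, and $r\ge 4$, Lemma~\ref{r>3} yields two singleton edges of $J$; write $\{1\},\{2\}\in J$ and set $S = \{i\in[r] : \{i\}\in J\}\supseteq\{1,2\}$.

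The heart of the matter is the structural claim that $J$ contains a two-element edge $\{a,b\}$ with $\{a\},\{b\}\in J$; this is the step I expect to be the main obstacle, since it is where intersection-closedness, $\rho(J) = r-1$, and non-centrality all have to be used together. The idea: because $\rho(J) = r-1$ and $r\ge 4$, every subset of $[r]$ of size at most $r-2$ — in particular every pair — lies in some edge of $J$; intersection-closedness then lets one intersect such edges (and intersect them with $\{1\}$ and $\{2\}$) repeatedly, depositing a strictly smaller edge into $J$ at each step, and one argues that this process must reach a pair, and then a pair whose two vertices are singletons of $J$. Non-centrality is exactly what rules out the one configuration in which the process could stall before producing such a pair, namely the case where every edge of $J$ of size greater than one passes through a single fixed coordinate.

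Granting the claim, fix $\{a,b\}\in J$ with $\{a\},\{b\}\in J$, and extract two facts from $(rq,J)$-homogeneity. First, if $\{u,w\}\in\partial H^*$ with $u\in X_a$ and $w\in X_b$, then $\{u,w\}\subset e$ for some $e\in H^*$, and since $\{a,b\}\in J = I_{H^*}(e)$ there is $f\in H^*$ with $e\cap f = \{u,w\}$, so property (3) of Lemma~\ref{semilattice} gives $\d_{H^*}(\{u,w\})\ge rq$. Second, if $v\in X_a$ lies in some edge of $H^*$, then because $\{a\}\in J$ the set $\{v\}$ arises as $e\cap f$ for suitable $e,f\in H^*$, so $\d_{H^*}(\{v\})\ge rq$; the members of an $rq$-edge Delta-system with core $\{v\}$ meet pairwise only in $v$, so they supply $rq$ distinct vertices of $X_b$ adjacent to $v$ in $\partial H^*$, and symmetrically with $a$ and $b$ interchanged. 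Let $B$ be the bipartite graph on $X_a\cup X_b$ whose edges are the pairs of $\partial H^*$ meeting both parts; by the second fact every non-isolated vertex of $B$ has degree at least $rq$, and by the first every edge of $B$ has core degree at least $rq$ in $H^*$.

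Finally, $G$ is a tree, hence bipartite, and $B$ is nonempty (any edge of $H^*$ yields a $B$-edge), so a greedy embedding works: order $V(G)$ as $v_1,v_2,\dots$ so that each $v_j$ with $j\ge 2$ has exactly one earlier neighbour, place $v_1$ at a non-isolated vertex of $B$, and for $j\ge 2$ place $v_j$ at a $B$-neighbour of the image of its parent that is not among the at most $q$ already-used vertices — possible because that parent's image has at least $rq > q$ neighbours in $B$. This produces a copy $F\cong G$ inside $\partial H^*$, each of whose $|F| = q$ edges joins $X_a$ to $X_b$ and hence has core degree at least $rq = r|F|$ in $H^*$; applying Lemma~\ref{kernelembed} to $F$ gives $F^+\subset H^*$, and since $F\cong G$ this means $G^+\subset H^*$, completing the proof.
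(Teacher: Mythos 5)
Your overall plan — dispose of $\rho(J)=r$ via Corollary~\ref{easy}, then for $\rho(J)=r-1$ find a bipartite graph $B$ between two parts $X_a,X_b$ of $\partial H^*$ with good degree and codegree properties, greedily embed the tree $G$ in $B$, and expand all at once via Lemma~\ref{kernelembed} — is a genuinely different route from the paper's. The paper instead inducts on $q$: at each step it takes a leaf $v$ with parent $u$, uses a Delta-system with \emph{singleton} core $\{\phi(u)\}$ to produce simultaneously a fresh hyperedge $f$ and the image $w=\phi(v)$ (namely the $X_b$-vertex of $f$, which by Lemma~\ref{r>3} again has large singleton core degree). Because the expansion is built edge-by-edge along with the embedding, the paper only ever needs $\d_{H^*}(\{v\})\ge rq$ for single vertices; it never needs codegree bounds on pairs, and hence never needs a two-element edge of $J$.

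Your route, by contrast, must invoke Lemma~\ref{kernelembed}, and so must certify $\d_{H^*}(\{u,w\})\ge rq$ for every edge $\{u,w\}$ of the embedded copy of $G$. As you correctly note, that forces you to produce a pair $\{a,b\}\in J$ with $\{a\},\{b\}\in J$, and this is strictly more than Lemma~\ref{r>3} gives you (that lemma only delivers two singletons). You flag this as the main obstacle and offer a sketch, but the sketch does not work as stated: intersecting any set with $\{1\}$ yields only $\{1\}$ or $\emptyset$, never $\{1,2\}$, and more generally repeated intersections of edges of $J$ can drop from size $\ge 3$ directly to size $\le 1$ without ever passing through a pair, so ``the process must reach a pair'' does not follow. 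The claim does appear to be true, and one can prove it by redoing the case analysis on $t=|\{i:[r]\setminus\{i\}\notin J\}|$ from the proof of Lemma~\ref{r>3} (in each of the cases $t=0$, $t=2$, and $3\le t\le r$ one can exhibit, using $\rho(J)=r-1$ and intersection-closedness, an explicit two-element edge both of whose vertices are singletons of $J$), but as written your proof has a real gap at exactly the step you identified, and the paper's induction is precisely the device that sidesteps needing this stronger structural fact about $J$.
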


 \begin{proof}
If $\rho(J) = r$, then apply Corollary \ref{easy}.  For $\rho(J) = r - 1$, the proof is a standard greedy embedding algorithm,
and we proceed by induction on $q$, the case $q = 1$ being trivial.
By Lemma \ref{r>3} and (2) and (3) in Lemma \ref{semilattice}, for each $e \in H^*$ there exist vertices $u_e,v_e \in e$ such that
$\d_{H^*}(u_e) \geq rq$ and $\d_{H^*}(v_e) \geq rq$. Let $C = \{u_e,v_e : e \in H^*\}$.
The key observation is that in order to do the induction step, we will map $V(G)$ to $C$.
For $q \ge 2$, let $v$ be a leaf of $G$, $u$ is its unique neighbor in $G$ and $G_1 = G - \{v\}$.
By induction, there exists a graph isomorphism $\phi : G_1 \rightarrow L$ such that $L \subset \partial H^*$
and $V(L) \subset C$ and $L^+ \subset H^*$. Then $\d_{H^*}(\phi(u)) \geq rq$, so there is a Delta-system $\triangle = \triangle_{r,rq} \subset H^*$ with $\mbox{core}(\triangle) = \{\phi(u)\}$. Therefore there is an edge $f \in \triangle$ such that $f \setminus\{\phi(u)\}$ is disjoint from $V(L)$. Moreover, by
Lemma~\ref{r>3} we can find another vertex $w \in f$ with $\d_{H^*}(w) \geq rq$. Extending $\phi$ by defining $\phi(v) = w$, we
have an isomorphism $\phi : G \rightarrow L \cup \{v,w\}$ such that $L \subset \partial H^*$ and
$L^+ \cup \{f\}$ is an embedding of $G^+$ in $H^*$.
\end{proof}

The intersection semilattice lemma was tailored to the problem of embedding expansions
of forests in $r$-graphs for $r \geq 4$ by F\"uredi~\cite{Furedi,FJS}.  The following lemma is the main consequence of the Delta-system method that is used to embed expansions of forests in $r$-graphs.  We will use it in Section 5.2 to give a proof of Theorem~\ref{main-tree} for $r \ge 4$.

\begin{lemma} \label{deltaconsequence}
For $v \geq 1$ and $r \geq 4$, there exists a constant $c = c(v,r)$ such that if $H$ is an $n$-vertex $r$-graph and $G$ a $v$-vertex forest,
 and $G^+ \not \subset H$, then there is a partition $H = H_1 \sqcup H_2$ of $H$ such that $|H_1| \leq  c{n - 2 \choose r - 2}$ and
 for all $f \in H_2$, there exists $x \in f$ such that $\d_{H_2}(\{x,y\}) \geq rv$ for all $y \in f \backslash \{x\}$.
\end{lemma}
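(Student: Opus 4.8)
The plan is to iterate the intersection semilattice lemma (Lemma~\ref{semilattice}) to strip off ``small'' pieces of $H$ until the remaining hypergraph is homogeneous, and then invoke Lemma~\ref{embed} to conclude that the leftover homogeneous piece must be negligible. The key structural dichotomy to exploit is that a $J$-homogeneous piece either has $\rho(J) \le r-2$ (hence is small by Lemma~\ref{rank}), or has $\rho(J) \ge r-1$, in which case it is either central --- meaning it contributes to $H_2$ via the desired property --- or it is non-central, in which case it cannot exist at all (it would contain $G^+$ by Lemma~\ref{embed}, since $G$ is a forest and each component is a tree).

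First I would set $s = rv$ and let $c_0 = c(r,s)$ be the constant from Lemma~\ref{semilattice}. Starting from $H$, apply Lemma~\ref{semilattice} to obtain an $r$-partite subgraph $H^* \subset H$ that is $(s,J)$-homogeneous with $|H^*| > c_0|H|$ for some intersection-closed $J \subset 2^{[r]}$. (Here one should first pass to an $r$-partite subhypergraph in the usual way, losing only a constant factor, or absorb this into $c_0$.) Now split by cases on $J$. If $\rho(J) \le r-2$, then Lemma~\ref{rank} gives $|H^*| \le \binom{n}{r-2} = O(\binom{n-2}{r-2})$. If $\rho(J) \ge r-1$ and $J$ is not central, then Lemma~\ref{embed} applied to each tree component of $G$ (embedding components one at a time into fresh vertices, exactly as in the greedy argument there) yields $G^+ \subset H^* \subset H$, contradicting our hypothesis; so this case is vacuous. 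The remaining case is $\rho(J) \ge r-1$ and $J$ central: then there is $x \in [r]$ such that $J$ contains all $(r-1)$-subsets containing $x$, and by homogeneity (conditions (2)--(3) of Lemma~\ref{semilattice}) every edge $f \in H^*$ contains a vertex $z_f$ (the vertex of $f$ in part $X_x$) with $\d_{H^*}(\{z_f,y\}) \ge s = rv$ for all $y \in f \setminus \{z_f\}$ --- this is precisely the defining property of membership in $H_2$. In all three cases, $H^*$ is either of size $O(\binom{n-2}{r-2})$ or has the $H_2$-property (or is empty); to handle the central case cleanly I would simply remove $H^*$ from $H$ and repeat.

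So the iteration is: maintain a current hypergraph $H'$ (initially $H$), apply the above, and if the semilattice piece $H^*$ is small put it into $H_1$ and continue on $H' \setminus H^*$; if $H^*$ is central put it into $H_2$ and continue on $H' \setminus H^*$. Since each step removes at least a $c_0$ fraction of the edges... no --- this needs more care, because the small pieces only shrink $H'$ by an additive $O(\binom{n-2}{r-2})$, and one cannot afford to repeat that infinitely often. The right accounting is: a central piece $H^*$ has, by construction, the property that its edges contain vertices of high degree \emph{within $H^*$}, but when we union many central pieces together we need the high-degree property to persist in the union $H_2$. This is automatic because degrees only increase under taking unions: if $\d_{H^*}(\{x,y\}) \ge rv$ and $H^* \subseteq H_2$, then $\d_{H_2}(\{x,y\}) \ge rv$. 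So I would instead phrase the iteration to peel off \emph{only} the small (non-central, $\rho \le r-2$) pieces into $H_1$, one at a time, and argue this terminates after a bounded number of rounds: once $H'$ has no such small-$\rho$ homogeneous piece of positive density, every $(s,J)$-homogeneous subgraph furnished by Lemma~\ref{semilattice} is central (the non-central $\rho \ge r-1$ case being impossible), and a short argument shows that then \emph{all} of $H'$ has the $H_2$-property --- e.g. by a final application of Lemma~\ref{semilattice} covering a positive fraction and then noting no edge outside a central structure can survive, or more directly by contradiction: if some edge $f \in H'$ lacked the property, the subgraph of $H'$ on edges ``like'' $f$ would again be homogeneous with a non-central or low-rank pattern.

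The main obstacle I anticipate is exactly this termination/covering bookkeeping: ensuring that the process of repeatedly extracting low-$\rho$ homogeneous pieces halts after a number of steps bounded in terms of $r$ and $v$ only (so that $|H_1| \le c(v,r)\binom{n-2}{r-2}$ with $c$ independent of $n$), and that what remains is \emph{entirely} covered by the central/high-degree structure rather than merely a positive fraction of it. The clean way around this is a direct, non-iterative argument: let $H_2$ be the set of all edges $f \in H$ possessing the stated property (some $x \in f$ with $\d_H(\{x,y\}) \ge rv$ for all $y \in f\setminus\{x\}$) and $H_1 = H \setminus H_2$; then apply Lemma~\ref{semilattice} to $H_1$, observe that the resulting homogeneous $H^* \subset H_1$ cannot be central (else its edges would have the $H_2$-property \emph{in $H^*$, hence in $H$}) and cannot be non-central with $\rho \ge r-1$ (Lemma~\ref{embed}), so $\rho(J) \le r-2$ and $|H^*| \le \binom{n}{r-2}$; since $|H^*| > c_0|H_1|$ this forces $|H_1| \le c_0^{-1}\binom{n}{r-2} = O(\binom{n-2}{r-2})$, giving the constant $c = c(v,r)$. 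This avoids iteration entirely, and the only delicate point is making sure the $H_2$-property is monotone under enlarging the host (which it is, trivially) so that passing to the subgraph $H^*$ does not create false positives.
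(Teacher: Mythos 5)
Your strategy and case analysis are right: the trichotomy among homogeneous pieces ($\rho(J) \le r-2$ hence small by Lemma~\ref{rank}; $\rho(J) \ge r-1$ and non-central hence $G^+ \subset H$ by Lemma~\ref{embed}; central), the bound $|H_1| < \binom{n}{r-2}/c_0$, and the observation that Lemma~\ref{embed} is stated for trees while $G$ is a forest are all correct. But your preferred ``clean'' direct construction has a genuine gap, located precisely at the point you flag as delicate and then dismiss as trivial.

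You define $H_2$ to be the set of edges $f$ with a center $x$ such that $\d_H(\{x,y\}) \ge rv$ for all $y \in f \setminus \{x\}$, with the degree taken in all of $H$. The lemma, however, asserts $\d_{H_2}(\{x,y\}) \ge rv$, with the degree taken inside $H_2$. These are not the same: the $rv$ edges of the Delta-system witnessing $\d_H(\{x,y\}) \ge rv$ may mostly or entirely lie in $H_1$, and there is no reason such an edge $g \supset \{x,y\}$ should itself admit a center $x'$ with $\d_H(\{x',y'\}) \ge rv$ for all $y' \in g\setminus\{x'\}$. So your $H_2$ need not be $rv$-full in the required self-referential sense. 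The monotonicity you invoke runs the wrong way: it lets the property pass from a subgraph up to $H$, but you need it to survive the cut from $H$ down to $H_2$.

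The paper resolves this self-reference by iterating the semilattice lemma on the residual $H \setminus (F_1 \cup \cdots \cup F_{i-1})$ and \emph{stopping at the first index $m+1$ with $\rho(J_{m+1}) \le r-2$}, setting $H_2 = F_1 \cup \cdots \cup F_m$ and $H_1 = H \setminus H_2$. The size bound for $H_1$ then follows as in your argument, since $F_{m+1}$ is simultaneously a $c'$-fraction of $H_1$ and of size at most $\binom{n}{r-2}$. The crucial extra feature is that for $f$ in a central piece $F_i$ with pivot $x$, property (3) of Lemma~\ref{semilattice} provides a Delta-system of size $s=rv$ with core $\{x,y\}$ lying entirely inside that single piece $F_i$, and $F_i \subset H_2$ in full; hence $\d_{H_2}(\{x,y\}) \ge rv$. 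Your abandoned iterative sketch was closer to this, but it lacked the stopping rule; it is that rule that makes the size bound and the internal degree bound hold simultaneously.
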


\begin{proof}
Apply Lemma~\ref{semilattice} to $H$ to obtain an $(r,s)$-homogeneous family $F_1$
with intersection pattern $J_1$ and $|F_1|> c'|H|$ (where $c':=c(r,s)$). Now apply Lemma~\ref{semilattice} again to $H-F_1$ to obtain $F_2$ and $J_2$, where $|F_2|>c'(|H|-|F_1|)$. Continue this process and let $m$ be the smallest nonnegative integer such that $\rho(J_{m+1})\le r-2$.  Put $H_2=\bigcup_{i=1}^m F_i$ and $H_1=H-H_2$. Then $|F_{m+1}| > c'|H_1|$ and, by Lemma~\ref{rank}, we have $|F_{m+1}| \le {n \choose r-2}$. Setting $c = 2/c'$ we have $|H_1|\le c {n \choose r-2}$ as desired. An edge not in $H_1$ is in some $F_i$ for $i \le m$, and $\rho(J_i) \ge r-1$.  If $r(J_i)=r-1$ or $r(J_i)=r-2$ and $J_i$ is not central, then $G^+ \subset H$ by Lemma~\ref{embed}, so if $f \in F_i\setminus H_1$,
then $J_i$ is central with rank $r-1$. Let $x$ be a  vertex of $[r]$ such that $J_i$ contains all $(r-1)$-sets containing $x$ but not $[r]\setminus\{x\}$. Since $J_i$ is intersection closed, all proper subsets of $f$ containing $x$ are cores of some Delta-system $\Delta_{r,s}$ in $F_i \subset H_2$, including all the subsets of the form $\{x,y\}$ where $y \in f\setminus\{x\}$.
Consequently, $\d_{H_2}(\{x,y\}) \geq rv$ for all $y \in f \backslash \{x\}$.\end{proof}

\subsection{Ramsey theory, shadows  and random sampling}\label{scanonical ramsey}

In this section we introduce another method for proving results on expansions. This approach is more recent, and was developed by Kostochka and the current authors in~\cite{KMV, KMV2, KMV3}
to prove Theorems~\ref{main-path}, \ref{main-tree} and \ref{k2t-expansion}. The method applies to $r$ graphs for $r \ge 3$, but here we restrict attention to the case of triple systems.

\subsubsection{Bipartite canonical Ramsey theorem}\label{canonical ramsey}

One of the main new ideas is to use the canonical Ramsey theorem of Erd\H{o}s and Rado~\cite{ER}.
We need a bipartite version of this classical result, for which the following definitions are used.

\begin{definition} Let $F$ be a bipartite graph with parts $X$ and $Y$ and an edge-coloring
$\chi$. Then
\begin{center}
\begin{tabular}{lp{5in}}
$\bullet$ &  $\chi$ is {\em $Z$-canonical} for $Z \in \{X, Y\}$ if for each $z \in Z$, all edges of $F$ on  $z$ have the same color and edges on different vertices in $Z$ have different colors \\
$\bullet$ &  $\chi$ is {\em canonical} if $\chi$ is $X$-canonical or $Y$-canonical \\
$\bullet$ &  $\chi$ is {\em rainbow} if the colors of all the edges of $F$ are different and \\
$\bullet$ &  $\chi$ is {\em monochromatic} if the colors of all the edges of $F$ are the same.
\end{tabular}
\end{center}
\end{definition}

If $\chi$ is an edge-coloring of a graph $F$ and $G \subset F$, then $\chi\vert_G$ denotes the edge-coloring of $G$ obtained by restricting
$\chi$ to the edge-set of $G$. The following bipartite version of the canonical Ramsey theorem was proved in~\cite{KMV2}.

\begin{theorem} \label{canonical} For each $s>0$ there exists $t>0$ such that for any  edge-coloring $\chi $ of $G = K_{t,t}$,
 there exists  $K_{s,s} \subset G$ such that $\chi \vert_{K_{s,s}} $ is monochromatic or rainbow or canonical.
\end{theorem}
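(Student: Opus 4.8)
The plan is to derive Theorem~\ref{canonical} from the ordinary (non-bipartite) canonical Ramsey theorem of Erd\H{o}s and Rado by a bootstrapping argument, using a large auxiliary two-coloring to separate the ``inside a part'' and ``across parts'' behavior of the coloring $\chi$. The key point is that in the bipartite setting the only canonical-type patterns available are monochromatic, rainbow, and the two one-sided canonical colorings, so we must do some work to rule out the mixed patterns that the usual canonical Ramsey theorem allows.

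First I would fix $s$ and choose $t$ enormous compared to $s$ (a tower-type bound is fine, since we only need existence of $t = t(s)$). Embed $K_{t,t}$ with parts $X = \{x_1,\dots,x_t\}$ and $Y = \{y_1,\dots,y_t\}$ and consider the edge-coloring $\chi$. The natural approach is to pass to an auxiliary complete graph $K_N$ on a vertex set identified with, say, $X$ (taking $N = t$), and define an auxiliary coloring $\psi$ on pairs $\{x_i, x_j\}$ that records, for the pair $(x_i, x_j)$, enough information about the restriction of $\chi$ to the two ``stars'' $\{x_iy_k\}$ and $\{x_jy_k\}$ — for instance whether $\chi(x_i y_k) = \chi(x_j y_\ell)$ as $(k,\ell)$ ranges over a carefully chosen structured subset. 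Applying the Erd\H{o}s-Rado canonical Ramsey theorem to $\psi$ on $K_N$ yields a large canonically $\psi$-colored subset $X' \subseteq X$, and by iterating (or by applying canonical Ramsey to a product coloring on $X \times Y$ directly, which is perhaps cleaner) one extracts a large $X'' \times Y''$ on which the original $\chi$ is governed by a bounded amount of ``type'' data.

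The main structural step is then to analyze the bounded-type situation: on a large grid $X'' \times Y''$ the coloring $\chi$ must, after passing to a further large subgrid, fall into one of a few normal forms — the color of $x y$ depends only on $x$, only on $y$, is constant, is a function of the pair $(x,y)$ that is injective, or is a ``diagonal'' pattern $\chi(x_iy_j)$ depending on $i - j$ or $\min(i,j)$ etc. For each normal form that is not already monochromatic / canonical / rainbow, one restricts to a sub-grid $K_{s,s}$ on which the pattern collapses: a diagonal or min-pattern restricted to $s$ rows and $s$ columns chosen far apart becomes rainbow, a pattern that is constant on rows becomes $X$-canonical (or monochromatic if the row-color function is constant on the chosen rows), and so on. This case analysis, choosing the sub-grid to kill each unwanted pattern, is where essentially all the work lies.

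\textbf{The main obstacle} I expect is precisely this normal-form classification: making sure the list of ``types'' produced by the canonical Ramsey reduction is complete and that each type genuinely contains one of the three target patterns on some $K_{s,s}$. The subtlety is that canonical Ramsey on the product structure can produce colorings whose dependence on $(i,j)$ is through both coordinates in an entangled way (not separately), and one must argue that any such dependence, restricted to a sufficiently spread-out $s \times s$ subgrid, is either injective (hence rainbow) or factors through one coordinate (hence canonical) — essentially a Ramsey-theoretic rigidity statement about functions of two ordered variables. Organizing the iterated extraction so that the bookkeeping stays finite, and verifying that no ``mixed'' pattern survives, is the delicate part; once the normal forms are pinned down, selecting the final $K_{s,s}$ is routine.
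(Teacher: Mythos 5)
Your proposal takes a genuinely different route from the paper, but it also has a substantive gap, so let me address both.

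The paper does not invoke the canonical Ramsey theorem of Erd\H{o}s and Rado at all. Its argument is self-contained and surprisingly elementary: one fixes a small subset $S \subset Y$ of size $2s^2$, and splits $X$ into the set $W$ of vertices seeing at least $s$ same-colored edges into $S$ and its complement $X_0$. If $W$ is large, a pigeonhole argument over the ${2s^2 \choose s}$ possible $s$-subsets of $S$ and the color seen there produces either a monochromatic or an $X$-canonical $K_{s,s}$. If $W$ is small, every $x \in X_0$ has a $2s$-element palette $C(x)$ of distinct colors into $S$, and one applies the Erd\H{o}s--Rado \emph{Sunflower} Lemma (Lemma~\ref{sunlemma}) --- not canonical Ramsey --- to the family $\{C(x)\}$. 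A large core yields a $Y$-canonical $K_{s,s}$ after a further pigeonhole on the orderings of the $s$ edges with core colors; a small core lets one pick disjoint $s$-element subsets $C'(x) \subset C(x)\setminus C$ and extract a rainbow $K_{s,s}$. The machinery used is strictly weaker than what you propose, and the bookkeeping is concrete and short.

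Your proposed derivation from the non-bipartite canonical Ramsey theorem via an auxiliary coloring is plausible as a strategy, but as written it is a plan rather than a proof: the auxiliary coloring $\psi$ is never specified, and the crucial step --- the enumeration of ``normal forms'' produced by the canonical Ramsey reduction and the verification that each one, on a suitably thinned subgrid, collapses to monochromatic, canonical, or rainbow --- is explicitly deferred (you yourself call it ``where essentially all the work lies''). That case analysis is exactly what would need to be done to turn the sketch into a proof, and it is nontrivial: the ordinary canonical Ramsey theorem on $K_N$ yields canonical coloring patterns ($\min$/$\max$/constant/rainbow), but translating these back through the auxiliary encoding to a classification of $\chi$ on a bipartite grid, and ruling out entangled two-variable dependences, requires an argument you have not supplied. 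Until that step is carried out, the proposal has a genuine gap. You might find it instructive that the paper sidesteps the entire issue by never leaving the bipartite setting and never needing anything stronger than the Sunflower Lemma.
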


\begin{proof} Let $X$ and $Y$ be the parts of $G$ and let $S = \{y_1,y_2,\dots,y_{2s^2}\} \subset Y$. Let $W$ be the set of
vertices $x \in X$ contained in at least $s$ edges of the same color connecting $x$ with $S$. If $|W| > m := s^2{2s^2 \choose s}$, then
there is a set $Y' \subset S$ of size $s$ and a set $X' \subset W$ of size $s^2$ such that for every
$x \in X'$, the edges $xy$ with $y \in Y'$ all have the same color. In this case we
recover either a monochromatic $K_{s,s}$ or an $X'$-canonical $K_{s,s}$. Now suppose $|W| \leq m$.
For $x \in X_0 := X \backslash W$, let $C(x)$ be a set of $2s$ distinct colors on edges between $x$ and $S$.
By Lemma \ref{sunlemma}, if $|X_0| > (2s)! (s!m)^{2s}$, then there exists $X_1 \subset X_0$
such that $\{C(x) : x \in X_1\}$ is a Delta-system $\triangle$ of size $s!m$. Let $C = \mbox{core}(\triangle)$.
First suppose $|C| \geq s$. Then we have a set $X_2$ of at least $|X_1|/{2s^2 \choose s} \geq s!m{2s^2 \choose s} > s \cdot s!$ vertices in $X_1$ which each send $s$ edges with colors from $C$ into a fixed subset $Y_3$ of $S$ of size $s$.
 Since there are $s!$ orderings of the $s$ edges with colors from $C$ on each $x \in X_2$,
 this implies that for some set $X_3 \subset X_2$ of size $s$, the $K_{s,s}$ between $X_3$ and $Y_3$ is
$Y_3$-canonical. Finally, suppose $|C| < s$. Pick $C'(x) \subset C(x) \backslash C$ of size $s$ for  $x \in X_0$.
Since $|X_0| = s!m > s{2s^2 \choose s}$, we find a set $Y^* \subset S$ of  $s$ vertices as well as a set $X^* \subset X$ of $s$ vertices
$x \in X_0$ such that the edges between $x$ and $Y^*$ have colors from $C'(x)$. Since the sets $C'(x)$ are disjoint,
 this is a rainbow copy of $K_{s,s}$. \end{proof}


\subsubsection{List colorings in shadows}

In this section, we relate Theorem~\ref{canonical} to hypergraphs via the following definition.  Recall that if $H$ is a 3-graph and $e \in \partial H$, then the neighborhood $N_H(e)$ is the set of $v \in V(H)\setminus e$ such that $e \cup \{v\} \in H$.

\begin{definition} Let $H$ be a $3$-graph. For $G \subset \partial H$ and $e \in G$, let
\[ L_G(e) = N_H(e)\setminus V(G).\]
The set $L_G(e)$ is called the {\em list} of $e$ and the elements of $L_G(e)$ are called {\em colors}.
\end{definition}

Let $L_G = \bigcup_{e \in G} L_G(e)$ -- this is the set of colors in the lists of edges of $G$.

\begin{definition} A {\em list-edge-coloring of $G$} is a map $\chi : G \rightarrow L_G$ with $\chi(e) \in L_G(e)$
for all $e \in G$. List-edge-colorings $\chi_1,\chi_2 : G \rightarrow L_G$ are {\em disjoint} if
$\chi_1(e) \neq \chi_2(f)$ for all $e,f \in G$.
\end{definition}

If $\chi$ is an injection -- the coloring is rainbow -- then clearly $G^{+} \subset H$. We require one more definition.

\begin{definition} Let $H$ be a $3$-graph and $m \in \mathbb N$. An {\em $m$-multicoloring} of $G \subset \partial H$ is a family
of list-edge-colorings $\chi_1,\chi_2,\dots,\chi_m : G \rightarrow L_G$ such that $\chi_i(e) \neq \chi_j(e)$ for every $e \in G$
and $i\neq j$.
\end{definition}

A necessary and sufficient condition for the existence of
an $m$-multicoloring of $G$ is that all edges of $G$ have codegree at least $m$ in $H$.
We stress here that the definitions are all with respect to the fixed host $3$-graph $H$. The following result will be key to the proofs of Theorem \ref{main-path} and Theorem \ref{main-tree}.

\begin{theorem} \label{canon} Let $m,s \in \mathbb N$, let $H$ be a $3$-graph, and let $G = K_{t,t} \subset \partial H$.
Suppose $G$ has an $m$-multicoloring. If $t$ is large enough, then there exists $F = K_{s,s} \subset G$ such that $F$ has
either a rainbow list-edge-coloring or an $m$-multicoloring such that the colorings are pairwise disjoint, and each
coloring is monochromatic or canonical.
\end{theorem}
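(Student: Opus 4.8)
The plan is to apply the bipartite canonical Ramsey theorem (Theorem~\ref{canonical}) once for each of the $m$ colorings to create a common canonical structure, and then to do a short clean-up that upgrades ``pointwise distinct'' to ``pairwise disjoint''. Fix $t = t(m,s)$ large (its value is pinned down at the end) and let $X,Y$ be the parts of $G = K_{t,t}$, with $m$-multicoloring $\chi_1,\dots,\chi_m : G \to L_G$, so $\chi_i(e) \ne \chi_j(e)$ for all edges $e$ and $i \ne j$. Two remarks are used throughout. First, each $\chi_i$ is in particular an edge-coloring of $K_{t,t}$, so Theorem~\ref{canonical} applies to it. Second, if $F \subseteq G$ then $V(F) \subseteq V(G)$ gives $L_G(e) \subseteq L_F(e)$, so the restriction $\chi_i|_F$ is again a list-edge-coloring of $F$, and restriction preserves the properties of being monochromatic, rainbow, or canonical. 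Consequently, if at any stage some $\chi_i$ restricted to a bipartite graph of part size at least $s$ is rainbow, we take a sub-$K_{s,s}$ and are done via the first alternative; so we may assume this never occurs.

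\textbf{Step 1: a common canonical structure.} Set $F_0 := G$. Inductively, having $F_{i-1} = K_{t_{i-1},t_{i-1}} \subseteq G$ on which each of $\chi_1,\dots,\chi_{i-1}$ is monochromatic or canonical (with respect to the parts inherited from $X,Y$), apply Theorem~\ref{canonical} to $\chi_i|_{F_{i-1}}$ to obtain $F_i = K_{t_i,t_i} \subseteq F_{i-1}$ on which $\chi_i$ is monochromatic, rainbow, or canonical; discarding the rainbow case as above, $\chi_i|_{F_i}$ is monochromatic or canonical, and by the preservation remark so is each $\chi_j|_{F_i}$ for $j < i$. After $m$ rounds we reach $F_m = K_{t_m,t_m}$ on which every $\chi_i$ is monochromatic, $X$-canonical, or $Y$-canonical, and $t_m$ can be made as large as desired by taking $t$ large enough.

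\textbf{Step 2: forcing pairwise disjointness.} We observe that on $F_m$ most pairs $i\ne j$ are already disjoint. If $\chi_i|_{F_m},\chi_j|_{F_m}$ are both monochromatic, their single colors differ by pointwise distinctness. If one is monochromatic (color $c$) and the other canonical, then $c \ne \chi_j(e)$ for every edge $e$, so $c$ lies outside the color set of $\chi_j$. If one is $X$-canonical and the other $Y$-canonical, then since in $K_{t_m,t_m}$ every vertex of $X$ meets every vertex of $Y$, $\chi_i(xy)$ depends only on $x$, $\chi_j(xy)$ only on $y$, and they differ for all $x,y$, so the two color sets are disjoint. In each of these cases $\chi_i(e)\ne\chi_j(f)$ for all $e,f$. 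The only remaining case is two colorings of the same canonical type, say both $X$-canonical with $\chi_i(xy)=a_x$, $\chi_j(xy)=b_x$, where $x\mapsto a_x$ and $x\mapsto b_x$ are injective and $a_x\ne b_x$ for all $x$. Put an auxiliary graph on $X$ with an edge $\{x,x'\}$ whenever $a_x = b_{x'}$; injectivity forces maximum degree at most two, hence there is an independent set $X'$ with $|X'|\ge |X|/3$, and on the sub-$K_{s,s}$ with $X$-part $X'$ the sets $\{a_x : x\in X'\}$ and $\{b_x : x\in X'\}$ are disjoint. Running over the at most $\binom{m}{2}$ same-type canonical pairs, and passing each time to a subset of $X$ (or of $Y$) of a third the size — which only helps pairs already handled — we are left with $F = K_{s,s}$ on which $\psi_i := \chi_i|_F$ form an $m$-multicoloring that is pairwise disjoint and in which each $\psi_i$ is monochromatic or canonical. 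Choosing $t$ at the outset large enough that $m$ iterations of Theorem~\ref{canonical} followed by dividing part sizes by $3^{\binom{m}{2}}$ still leaves part sizes at least $s$ completes the proof.

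\textbf{Main obstacle.} The only genuinely delicate point is Step~2: recognizing that the pointwise hypothesis $\chi_i(e)\ne\chi_j(e)$ together with the completeness of $K_{t,t}$ already disposes of every combination of color types except two canonical colorings of the \emph{same} side, and handling that residual case with the maximum-degree-two argument. Everything else is a straightforward iteration of Theorem~\ref{canonical} and bookkeeping of the function $t = t(m,s)$.
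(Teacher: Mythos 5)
Your proof is correct, and it follows the same high-level skeleton as the paper's (iterate the bipartite canonical Ramsey theorem $m$ times to force every coloring to be monochromatic or canonical on a common $K_{t_m,t_m}$, then use an auxiliary low-degree graph plus independent sets to kill same-side canonical clashes), but the mechanism you use to obtain disjointness between colorings of different types is genuinely different from the paper's, and arguably cleaner.

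The paper does not restrict the given multicoloring. Instead, at round $i$ it picks a fresh color $c_i(e)$ from the list of each surviving edge, applies Theorem~\ref{canonical}, and then \emph{deletes} $c_i(e)$ from the lists before the next round. Disjointness between, say, an $X$-canonical coloring constructed at round $i$ and a $Y$-canonical one from round $i'>i$ is then deduced from the deletion: the used color was removed from the lists of all edges at $x$, so it is unavailable later. You instead fix the given colorings $\chi_1,\dots,\chi_m$, restrict them, and read off disjointness directly from the pointwise hypothesis $\chi_i(e)\ne\chi_j(e)$ together with the completeness of $K_{t,t}$. Your observations here are all sound: for monochromatic vs.\ anything, the single color $c$ is excluded from every value of the other coloring; for $X$-canonical vs.\ $Y$-canonical, the color sets are indexed by $X$ and $Y$ respectively and the edge $xy$ witnesses $a_x\ne b_y$ for every pair. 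This is simpler than the paper's bookkeeping and does not require constructing a new multicoloring on the fly. Your treatment of the residual same-side canonical case is also correct (the degree-$\le 2$ auxiliary graph, independent set of a third the size, iterating over pairs) and is a minor variant of the paper's, which handles all $b$ colorings of one side at once via a degree-$\le b(b-1)$ auxiliary graph. The trade-off is that the paper's construction-via-deletion makes disjointness across types automatic without invoking completeness, while your approach keeps the original colorings intact and leans on the structure of $K_{t,t}$. Both give the theorem; yours is more self-contained in Step 2, at the cost of iterating the independent-set pass $\binom{m}{2}$ times rather than twice.
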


\begin{proof} Set $s=t_m/m^2$ and $t_m < t_{m-1} < \cdots < t_1 < t_0=t$ where Theorem \ref{canonical} with input $t_i$ has output $t_{i-1}$.
Pick a  color $c_1(e)$ on each edge $e\in G$ and apply Theorem \ref{canonical} to $G$.   We obtain a
rainbow, monochromatic or canonical subgraph $G_1$ of $G$ where $G_1=K_{t_1, t_1}$. If it is rainbow, then we are done, so assume it is monochromatic or canonical. For every $e\in G_1$, remove $c_1(e)$ from its list. Now pick another color on each edge of $G_1$ and repeat. We obtain subgraphs
$G_m \subset G_{m-1} \subset \cdots \subset G_1$ such that each $G_i$ is monochromatic or canonical where $G_i=K_{t_i, t_i}$ has parts $X_i, Y_i$.
In particular, each coloring of the $m$-multicoloring of $G$ restricted to $G_m$ is monochromatic or canonical.
\smallskip

Let us assume that we have $a$ monochromatic colorings, $b$ $X_m$-canonical colorings, and $c$ $Y_m$-canonical colorings of $G_m$ where $a+b+c=m$.
It suffices to ensure that these colorings are pairwise disjoint.
A  color $\chi(xy)$ in an $X_i$-canonical coloring of  $G_i$ cannot appear in a $Y_{i'}$-canonical coloring of $G_{i'}$ for $i'>i$ as $\chi(xy)$ was deleted from all edges incident to $x$ when forming $G_{i+1}$.  A similar statement holds with $X$ and $Y$ interchanged, so every $X_m$-canonical coloring of $G_m$ is disjoint from every  $Y_m$-canonical coloring of $G_m$.  The same argument shows that no color in a monochromatic coloring appears in a canonical coloring. It suffices to show that colors on different $X_m$-canonical colorings are disjoint (and the same for $Y_m$-canonical).
\smallskip

Let the $b$  $X_m$-canonical colorings be
$\chi_1,...,\chi_b$.
Construct an auxiliary graph $K$ with $V(K)=X_m$ where $xx'\in K$ if there exist $i\ne i'$ and a  color
$\alpha$ that is canonical for  $x$ in $\chi_i$ and  canonical for $x'$ in $\chi_{i'}$.
Then the maximum degree of $K$ is at most $b(b-1)$, so $K$ has an independent set of size
$s=t_m/m^2\ge |X_m|/b^2$. Let us restrict $X_m$ to this independent set. We repeat this procedure for $Y_m$ and finally obtain a subgraph $F=K_{s,s}$ with an $m$-multicoloring that satisfies the requirement of the theorem.
\end{proof}

\subsubsection{Random sampling}\label{randomktt}

Let $H$ be a triple system and $K \subset \partial H$. We say that $K$ has {\em out-codegree at least $d$}
if for every edge $e \in K$, there exist vertices $v_1,v_2,\dots,v_d \in V(H) \backslash V(K)$
such that $e \cup \{v_i\} \in H$ for $i \leq d$.

\begin{lemma} \label{kttlemma}
Let $m,t \in \mathbb N$, $\delta \in \mathbb R_+$ and $H$ be an $n$-vertex triple system.
Suppose that $F \subset \partial H$ and each $f \in F$ has $d_H(f)\ge m$.
If $|F| \geq \delta n^2$ and $n$ is large enough, then there is a complete bipartite graph $K_{t,t} \subset F$
with out-codegree at least $m$.
\end{lemma}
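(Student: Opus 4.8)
The plan is to sort the edges of $F$ by their codegree in $H$, dispose of the ``easy'' ones for free, and then combine the K\"ov\'ari--S\'os--Tur\'an theorem with a first moment argument. Call an edge $f \in F$ \emph{heavy} if $d_H(f) \ge m + 2t$ and \emph{light} otherwise. Heavy edges require no work at all: any copy of $K_{t,t}$ has exactly $2t$ vertices, so if $f$ is a heavy edge of such a copy $K$, then $|N_H(f)\setminus V(K)| \ge d_H(f) - 2t \ge m$. Hence it is enough to produce a $K_{t,t} \subseteq F$ in which every \emph{light} edge $e$ has $N_H(e)$ disjoint from $V(K)$; the crucial feature of a light edge is that $|N_H(e)| = d_H(e) < m + 2t$ is bounded in terms of $m$ and $t$ only. (Note that the lemma is trivial if one weakens ``out-codegree at least $m$'' to ``out-codegree at least $m+2t$''; all the content sits in edges of small codegree, which is exactly what ``light'' isolates.)

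First I would invoke Theorem~\ref{kst}: for any fixed integer $T$ one has $\ex_2(n,K_{T,T}) = o(n^2)$, so the hypothesis $|F| \ge \delta n^2$ forces a copy of $K_{T,T}$ inside $F$ once $n$ is large, where $T = T(m,t)$ is a large constant to be pinned down at the end --- it will suffice to take $T > (m+2t)\,t^3$. Write $X$ and $Y$ for the two parts of this $K_{T,T}$.

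Next I would pick $A \subseteq X$ and $B \subseteq Y$ independently and uniformly among $t$-element subsets, and let $K$ be the copy of $K_{t,t}$ with parts $A$ and $B$. Call a light edge $e$ of the big $K_{T,T}$ \emph{bad} if $e \subseteq A\cup B$ but $N_H(e)\cap(A\cup B) \ne \emptyset$. For a fixed light $e$, a union bound over the fewer than $m+2t$ vertices of $N_H(e)$ --- using $A\cup B \subseteq X\cup Y$, the estimate $\PP[\{x,v\}\subseteq A]\,\PP[y\in B] \le t^3/T^3$ together with its $A \leftrightarrow B$ mirror image, and the fact that a neighbour outside $X\cup Y$ contributes $0$ --- gives $\PP[e\text{ bad}] < (m+2t)\,t^3/T^3$. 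Summing over the at most $T^2$ edges of $K_{T,T}$ yields $\EE[\,\#\{\text{bad edges}\}\,] < (m+2t)\,t^3/T < 1$ once $T > (m+2t)\,t^3$. So some choice of $A,B$ produces no bad edge, and then $K$ has out-codegree at least $m$: its light edges keep all of their $\ge m$ neighbours, and its heavy edges keep all but at most $2t$ of their $\ge m+2t$ neighbours.

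The delicate point --- and the reason one cannot simply run a random deletion inside an arbitrary large $K_{T,T}$ --- is that an edge whose $H$-neighbourhood is large but happens to lie almost entirely inside $V(K_{T,T})$ would compel $A\cup B$ to miss nearly all of $X\cup Y$, which is impossible. Peeling off the heavy edges first (a one-line degree count) is precisely what keeps the neighbourhoods of the surviving light edges bounded, so that the expected number of bad edges can be driven below $1$ by taking $T$ large; everything else is a routine first moment estimate, and the threshold on $n$ enters only through the application of Theorem~\ref{kst}.
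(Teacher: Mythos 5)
Your proof is correct, but it takes a genuinely different route than the paper's. The paper does the randomization \emph{first} and on the \emph{whole} vertex set: it picks a random $T \subseteq V(H)$ with each vertex included independently with probability $1/2$, defines $G$ as the set of $f \in F$ with $f \subseteq T$ that have at least $m$ neighbors in $V(H)\setminus T$, observes $\EE|G| \ge |F|\,p^2(1-p)^m \ge \frac{\delta}{2^{m+2}}n^2$, fixes a good realization of $T$, and then applies Theorem~\ref{kst} to $G$ to extract a $K_{t,t}$ which by construction already has the required out-codegree. You invert the two steps: apply Theorem~\ref{kst} first to find a fixed $K_{T,T}$ with $T = T(m,t)$ a large constant, then randomize \emph{inside} that bounded object by choosing $A \subseteq X$, $B \subseteq Y$ of size $t$ uniformly. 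Your argument needs one extra idea that the paper's does not, namely the heavy/light split (heavy edges are unconditionally safe since $V(K)$ can absorb at most $2t-2$ of their $\ge m+2t$ neighbors; light edges have $O_{m,t}(1)$-bounded neighborhoods so the union bound closes). The paper's one-step sampling avoids this case split entirely and is shorter, but your version has the structural merit of showing that the desired $K_{t,t}$ can always be located inside any sufficiently large $K_{T,T} \subseteq F$ with all codegrees at least $m$, with the first-moment computation taking place over a set of constant size rather than all of $V(H)$. Both arguments are sound and both rely on the same two ingredients (first moment plus K\"ovari--S\'os--Tur\'an); only the order and the locus of the randomization differ.
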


\begin{proof} Let $T$ be a random subset of $V(H)$ obtained by picking each vertex independently with probability $p=1/2$.
Let $G$ be the graph of all edges $f \in F$ with $f \subset T$ such that there exist $v_1,v_2,\dots,v_m \in V(H) \backslash T$
such that $f \cup \{v_i\} \in H$ for $1 \leq i \leq m$. Then
$$\mathbb E(|G|) \ge |F| p^{2}(1-p)^{m}\ge \frac{\delta}{2^{m+2}}n^{2}.$$
So there is a $T \subset V(H)$ with $|G|$ at least this large.
If $n$ is large enough, then Theorem \ref{kst} implies that there exists a complete bipartite graph $K \subset G \subset F$
with parts of size $t$. Due to the definition of $G$, the subgraph $K$ satisfies the requirements of the lemma.
\end{proof}

By Lemma \ref{kttlemma}, for a bipartite graph $G$, the problem of embedding $G^+$ in a triple system $H$ whose shadow is dense and $d$-full
is reduced to the problem of embedding $G^+$ in a subgraph of $H$ consisting of a $K_{t,t} \subset \partial H$ with out-codegree
at least $m$.

\begin{corollary}
Let $m \geq 2$ and $\delta > 0$, and let $H$ be an $m$-full triple system with $n$ vertices such that $|\partial H| \geq \delta n^2$.
If $n$ is large enough, then $G^+ \subset H$ for any bipartite graph $G$ with $|G| \leq m$.
\end{corollary}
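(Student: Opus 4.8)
The plan is to reduce the statement, via the random-sampling lemma and the shadow canonical Ramsey theorem, to a purely combinatorial selection problem on a large complete bipartite subgraph of $\partial H$. Concretely, I would set $s := |V(G)|$ (so that $G$, in any fixed bipartition, embeds into $K_{s,s}$), let $t = t(m,s)$ be the threshold furnished by Theorem~\ref{canon} for these values of $m$ and $s$, and then take $n_0$ large enough for Lemma~\ref{kttlemma} to apply with this $t$.

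First I would invoke Lemma~\ref{kttlemma}. Since $H$ is $m$-full, every edge of $\partial H$ has codegree at least $m$ in $H$, and by hypothesis $|\partial H|\ge\delta n^2$; hence for $n\ge n_0$ there is a copy $K := K_{t,t}\subset\partial H$ with out-codegree at least $m$. By definition of out-codegree this means that for each edge $e\in K$ the list $L_K(e)=N_H(e)\setminus V(K)$ has at least $m$ elements, so $K$ carries an $m$-multicoloring $\chi_1,\dots,\chi_m$ (simply choose $m$ distinct colors from each list). I would then feed this $m$-multicoloring into Theorem~\ref{canon}: because $t$ was chosen large enough in terms of $m$ and $s$, it yields a subgraph $F := K_{s,s}\subset K$ that either has a rainbow list-edge-coloring, or has an $m$-multicoloring $\chi_1,\dots,\chi_m$ whose $m$ colorings are pairwise disjoint (each of them monochromatic or canonical, but I will not need this last feature).

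Finally I would assemble a copy of $G^+$ inside $H$. Fix once and for all an injection $\phi:V(G)\to V(F)$ respecting the bipartition of $G$, which exists since each part of $G$ has at most $|V(G)| = s$ vertices. In the rainbow case, with rainbow coloring $\chi$ of $F$, assign to each edge $e\in G$ the expansion vertex $\chi(\phi(e))$: these lie in $N_H(\phi(e))\setminus V(K)$, are pairwise distinct because $\chi$ is injective, and avoid $\phi(V(G))\subseteq V(F)\subseteq V(K)$, so $G^+\subset H$. In the other case, since $|G|\le m$ I may fix an injection $\iota:E(G)\hookrightarrow[m]$ and give each edge $e$ the expansion vertex $\chi_{\iota(e)}(\phi(e))$; for distinct edges $e,e'$ the colorings $\chi_{\iota(e)}$ and $\chi_{\iota(e')}$ are disjoint, so the two expansion vertices differ, and again each avoids $\phi(V(G))$. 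Either way this exhibits a copy of $G^+$ in $H$. The argument is little more than a bookkeeping composition of Lemma~\ref{kttlemma} and Theorem~\ref{canon}, so there is no genuine obstacle; the only points requiring care are getting the order of quantifiers right ($s$ from $G$, then $t$ from Theorem~\ref{canon}, then $n_0$ from Lemma~\ref{kttlemma}) and observing that pairwise disjointness of the $m$ colorings together with $|G|\le m$ already suffices — one coloring per edge — so that the monochromatic/canonical dichotomy plays no role in this particular corollary.
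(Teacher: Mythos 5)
Your proof is correct, but it detours through significantly heavier machinery than the paper needs. The paper's argument for this corollary stops after Lemma~\ref{kttlemma}: take $t = |V(G)|$, obtain a $K_{t,t} \subset \partial H$ with out-codegree at least $m$, note that the bipartite graph $G$ embeds into $K_{t,t}$, and then \emph{greedily} assign expansion vertices one edge at a time --- at each step the current edge has at least $m$ candidate expansion vertices outside $V(K)$, and since $|G| \leq m$ at most $m-1$ have been consumed so far, a fresh one always remains, and all of them automatically avoid $V(G) \subseteq V(K)$. You instead push the $m$-multicoloring through Theorem~\ref{canon} to extract a rainbow or pairwise-disjoint family of colorings and then read off a system of distinct representatives. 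That works, and you correctly observe that the monochromatic/canonical dichotomy is irrelevant here; but you stopped one step short of noticing that for $|G|\leq m$ no Ramsey-type refinement is needed at all --- disjointness (or rainbow-ness) is being used only to guarantee a system of distinct representatives, which greedy already provides for free when the lists have size at least $m$. Where Theorem~\ref{canon} genuinely earns its keep is the situation flagged immediately after the corollary in the paper, namely $m \ll |G|$ (e.g.\ $m = \sigma(G^+)$ in Theorems~\ref{main-path} and~\ref{main-tree}), where a greedy choice of expansion vertices runs out and one must exploit the canonical structure of the colorings. So your write-up is a valid but over-engineered proof of this particular corollary; as a warm-up for the harder theorems it is arguably the more instructive argument, but the corollary itself is meant to be the trivial case that falls to greedy.
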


\begin{proof}
Suppose $|V(G)| = t$. By Lemma \ref{kttlemma}, there exists a complete bipartite graph $K = K_{t,t} \subset \partial H$
such that $K$ has out-codegree at least $m$. Since $G \subset K$ and $|G| \leq m$, we may greedily embed the $m$ vertices of $V(G^+) \backslash V(G)$
in $V(H)$ to obtain $G^+ \subset H$.
\end{proof}

The corollary handles the case $m \geq |G|$, however, we will be applying the lemma with $m \ll |G|$,
and in particular with $m = \sigma(G^+)$ in Theorems \ref{main-path} and \ref{main-tree} for $r = 3$, and for this
we require Theorem \ref{canon}.

\section{Proof of Theorems}
In the next sections, we will show how to apply all the tools we have developed in section 4 to extremal problems for expansions.

\subsection{Proof of Theorem \ref{kst-expansion}}\label{proof-kst}

In this section we discuss how to apply Theorem \ref{kst} and Lemma \ref{fullsub} to obtain a short proof of
Theorem \ref{kst-expansion}. Since the proof of Theorem \ref{generalbipartite} is very similar, we omit that proof.

\medskip

{\bf Proof of Theorem \ref{kst-expansion}.}
The lower bound in Theorem \ref{kst-expansion} comes from a calculating the number of triangles in the norm
graph constructions in~\cite{ARS}. In particular, a $K_{s,t}$-free graph $G$ is constructed in~\cite{ARS}
which has $\Theta(n^{3 - 3/s})$ triangles. Taking $H$ to be the hypergraph whose vertex set is $V(G)$
and whose edge set is the set of triangles in $G$, we obtain a $K_{s,t}^+$-free triple system with
$\Theta(n^{3 - 3/s})$ edges. Precise details are presented in~\cite{KMV3} or~\cite{AS}.

\medskip

Next we show $\ex_3(n,K_{s,t}^+) = O(n^{3 - 3/s})$ for $t \geq s \geq 3$. For convenience, set $f(n) = \ex_2(n,K_{s,t})$
and $g(n) = \ex_2(n,K_{s-1,t})$. Theorem \ref{kst} gives bounds on these quantities.
By Lemma \ref{fullsub}, if $H$ is a triple system with no $K_{s,t}^+$, $H$ has an $(s + 1)(t + 1)$-full subgraph $H'$
where
\[ |H'| \geq |H| - (s + 1)(t + 1)|\partial H| = |H| - O(n^2).\]
If $|\partial H'| > f(n)$, then we find a $K_{s,t} \subset \partial H'$,
which can be greedily expanded to a $K_{s,t}^+ \subset H' \subset H$, a contradiction. Therefore $|\partial H'| \leq f(n)$.
If $d = |H|/2f(n)$, then there is a $d$-full subgraph $H''$ of $H'$ such that
\[ |H''| \geq |H'| - d|\partial H'| \geq \tfrac{1}{2}|H| - O(n^2).\]
Since $|\partial H''| \leq |\partial H'| \leq f(n)$, there is a vertex $v \in V(\partial H'')$
of degree at most $m = f(n)/n$ in $\partial H''$. If $N(v)$ is the neighborhood of $v$ in $\partial H''$,
then $N(v)$ induces a subgraph of $\partial H''$ with at most $g(m)$ edges, otherwise
we find a $K_{s-1,t}$ in $N(v)$, and adding $v$ we get a $K_{s,t} \subset \partial H'' \subset \partial H'$,
a contradiction. However, $H''$ is $d$-full, so $N(v)$ induces a subgraph of $H''$ of minimum degree at least
$d$. This shows $\tfrac{1}{2}dm \leq g(m)$, which implies $|H| = O(g(m)f(n)/m)$. By Theorem \ref{kst}, $f(n) = O(n^{2 - 1/s})$ which implies $m = O(n^{1 - 1/s})$. Since $g(m) = O(m^{2 - 1/(s - 1)})$ from Theorem \ref{kst}, we obtain
\[ |H| = O(g(m)f(n)/m) = O(n^{3 - 3/s}).\]
This completes the proof of Theorem \ref{kst-expansion}. \qed

\subsection{Proof of Theorem \ref{main-tree} for
$r \ge 4$}

In this section we show how to use Lemmas~\ref{kernelembed} and \ref{deltaconsequence} to prove Theorem \ref{main-tree} when $r \ge 4$.
Let us assume that $r \ge 4$, $G$ is a forest with $v$ vertices, $\sigma=\sigma(G)$, and $G^+ \not\subset H \subset {[n] \choose r}$. We are to prove an upper bound for $|H|$. Put $s=rv$ and let
\[ K = \{e \in \partial H : \d_H(e) \geq s\}.\]
  Lemma~\ref{kernelembed} implies that $G \not \subset K$, so the greedy algorithm yields $|K|\le (v-2)n$.  Let $V(K)=\{x_1, \ldots, x_n\}$ so that $d_K(x_i) \ge$ $d_K(x_{i+1})$. Let $L:=\{x_1, \ldots, x_p\}$ be the $p$ vertices of highest degree in $K$, where $p=n^{\epsilon}$ for some small positive $\epsilon$ depending only on $G$ and $r$. Using $|K| \le (v-2)n$ and an averaging argument yields
\begin{equation} \label{z} z:=d_{K}(x_{p+1}) \le
\frac{d_K(x_1) + \cdots + d_K(x_{p+1})}{p+1}
\le \frac{2|K|}{p+1} < \frac{2(v-2)n}{p}.\end{equation}
Consider the partition $H=H_1 \cup H_2$ in Lemma~\ref{deltaconsequence} and write
$H_2= H_3 \cup H_4 \cup H_5 \cup H_6$
where:
\begin{center}
\begin{tabular}{lp{5.5in}}
$\bullet$ & $H_3$ is the set of edges in $H_2$ with center outside $L$ \\
$\bullet$ & $H_4 \subset H$ is the set of edges meeting $L$ in at least two vertices \\
$\bullet$ & $H_5 \subset H_2\setminus H_3$ is the set of edges $f$ with $|f \cap L| =1$ and $d_{H_2\setminus H_3}(f \setminus L) \le \sigma-1$ \\
$\bullet$ & $H_6:=H_2\setminus (H_3 \cup H_4 \cup H_5)$.
\end{tabular}
\end{center}

We now show that $|H_3|, |H_4|, |H_6|$ are each $O(n^{r-1-\delta})$ for some $\delta>0$
while $|H_5| \le (\sigma-1){n-1 \choose r-1}$.

Counting edges of $H_3$ from their centers, an easy averaging argument, Lemma~\ref{deltaconsequence} and (\ref{z}) give
$$|H_3|\le \sum_{i=p+1}^n{d_K(x_i) \choose r-1}
\le \frac{\sum_i d_K(x_i)}{z}{z \choose r-1} = O(n^{r-1}/p^{r-2})= O(n^{r-1-r\epsilon}).$$
Clearly
$$|H_4| \le {p \choose 2}{n \choose r-2}= O(n^{r-2+2\epsilon}).$$
Counting the edges of $H_5$ from their portion outside $L$ immediately gives
$$|H_5| \le (\sigma-1){n-p \choose r-1}.$$
For $A \in {L \choose \sigma}$  let $\cB_A:=\{B: \{a\} \cup B \in H \hbox{ for all $a \in A$}\}$ and
$$H_A:=\{f \in H: a \in A, B \in \cB_A, \{a \}
\cup B=f \}.$$
Each edge $f \in H_6$ can be written in the form
$f=\{a \} \cup B$ where $a \in L$, $B \cap L=\emptyset$, such that there are at least $\sigma$ edges in $H\setminus H_3$ containing $B$. Moreover, because none of these $\sigma$ edges are in $H_3$ their last point is in $L$. Consequently, $H_6 \subset \bigcup_{A\in {L\choose \sigma}} H_A$.

Given a copy of $G^+$, let $Y$ be a set of $\sigma$ vertices of this copy that intersect each edge of $G^+$ in exactly one vertex (here is where we use the definition of $\sigma$). Let $\cC$ be the $(r-1)$-graph obtained from $G^+$ by removing the set $Y$. In other words, every edge of $G^+$ gets shrunk by removing the unique vertex of $Y$ from it. Since $G^+ \not\subset H$, we conclude that $\cC\not\subset \cB_A$ for each $A$. Consequently, $|\cB_A| \le \ex_{r-1}(n, \cC) = O(n^{r-2})$ where the bound holds from an easy application of Lemma~\ref{fullsub}. Therefore
$$|H_6| \le \sum_{A \in {L \choose \sigma}}|H_A|
\le  \sum_{A \in {L \choose \sigma}}|A||\cB_A|
\le \sigma{p \choose \sigma}O(n^{r-2})= O(n^{r-2+\epsilon \sigma}).$$

\subsection{Proof of Theorems \ref{main-path}, \ref{main-tree} and \ref{k2t-expansion} for $r = 3$}
\label{template}

The proofs of Theorems \ref{main-path}, \ref{main-tree} and \ref{k2t-expansion} for $r=3$ all
use the same template, which we describe as follows.
Let $G$ be a tree or a cycle or a graph with $\sigma(G^+) = 2$. Let $H$ be an $n$-vertex triple system with at least
$(\sigma(G^+) - 1 + \varepsilon){n \choose 2}$ edges, where $\varepsilon > 0$. If we can show $G^+ \subset H$
when $n$ is large enough, then we have $\ex_3(n,G^+) \sim (\sigma(G^+) - 1){n \choose 2}$.

\medskip

By Lemma \ref{fullsub}, $H$ has a $\sigma$-full subgraph $H'$ with at least $\varepsilon {n \choose 2}$ edges. Let $|V(G)| = k$. If $|\partial H'| < \tfrac{\varepsilon}{2k}{n \choose 2}$, then by Lemma \ref{fullsub}, $H'$ contains a $2k$-full subgraph $H''$. In particular, $\partial H''$ has minimum degree at least $k$, and therefore contains $G$. Now every edge of this copy of $G$ has degree at least $2k$ in $H''$,
and therefore we may greedily expand $G$ to $G^+$ in $H'$, as required. Next
we consider the case $|\partial H'| \geq \frac{\varepsilon}{2k}{n \choose 2}$.

\medskip

By Lemma \ref{kttlemma}, for any $t$
there exists $n_0$ such that $n > n_0$, then
there exists a $K = K_{t,t} \subset \partial H'$ each edge of which has out-codegree at least $\sigma$.
So if $e \in K$, there exist vertices $v_1(e),v_2(e),\dots,v_{\sigma}(e) \in V(H') \backslash V(K)$ such that
$e \cup \{v_i(e)\} \in H'$ for $1 \leq i \leq \sigma$. In the language of list colorings, we have $\sigma$ list colorings
$\chi_1,\chi_2,\dots,\chi_{\sigma}$ of $K$, defined by $\chi_i(e) = v_i(e)$ for $e \in K$. By Theorem \ref{canon},
for any $s$ there exists $n_1$ such that if $n > n_1$, there exists a complete bipartite graph $J \subset \partial H'$ with parts
$X$ and $Y$ of size $s$ such that the restriction of $\chi_i$ to $J$ is canonical, monochromatic or rainbow, for $1 \leq i \leq \sigma$.

\medskip

The remainder of the proof consists in showing that we can embed $G^+$ in $H'$ using the colorings of $J$.
The proof of the embedding is generally technical for Theorem \ref{main-tree} and Theorem \ref{k2t-expansion},
so we focus on the special case of embedding an expansion of a quadrilateral i.e.
Theorem \ref{main-path} for $k = 4$. In this case $\sigma = 2$, and we let $s = 4$, and we consider the cases one by one:

\begin{center}
\begin{tabular}{lp{5.5in}}
$\bullet$ & {\em Some $\chi_i$ is rainbow.} Then $C_4^+ \subset J^+ \subset H'$. \\
$\bullet$ & {\em $\chi_1$ and $\chi_2$ are monochromatic.} Suppose  $\chi_1(e) = v_1$ for $e \in J$ and $\chi_2(e) = v_2$ for $e \in J$.
Let $e_1,e_2$ and $e_3,e_4$ form vertex-disjoint paths of length two in $J$. Then
$e_1 \cup \{v_1\},e_2 \cup \{v_2\},e_3 \cup \{v_2\},e_4 \cup \{v_1\}$ is a $C_4^+$ in $H'$.\\
$\bullet$ & {\em $\chi_1$ and $\chi_2$ are both canonical.} Suppose $\chi_1$ and $\chi_2$ are both $X$-canonical, the
other cases are similar. Then pick any quadrilateral $\{e_1,e_2,e_3,e_4\} \subset J$,
where $e_1 \cap e_2 \subset X$ and $e_3 \cap e_4 \subset X$. By definition there exist distinct vertices $v_1,v_2$ and $w_1,w_2$
such that $\chi_1(e_1) = v_1$, $\chi_2(e_2) = v_2$, $\chi_1(e_3) = w_1$ and $\chi_2(e_4) = w_2$. Then
$e_1 \cup \{v_1\},e_2 \cup \{v_2\},e_3 \cup \{w_1\},e_4 \cup \{w_2\}$ is a $C_4^+$ in $H'$.\\
$\bullet$ & {\em $\chi_1$ is canonical and $\chi_2$ is monochromatic.} Suppose $\chi_1$ is $X$-canonical. Let
$e_1,e_2,e_3,e_4$ form a path of length four in $J$ with $e_2 \cap e_3 \subset Y$. Then there exist
distinct vertices $v_1,v_2,v_3$ such that $\chi_1(e_2) = v_2$, $\chi_1(e_3)= v_3$ and
$\chi_2(e_1) = \chi_2(e_4) = v_1$. Now $e_1 \cup \{v_1\},e_2 \cup \{v_2\},e_3 \cup \{v_3\},e_4 \cup \{v_1\}$ is a $C_4^+$ in $H'$.
\end{tabular}
\end{center}

In all cases, $C_4^+ \subset H'$, and we conclude $\ex_3(n,C_4^+) \sim {n \choose 2}$. The
cases of embedding $G^+$ when $G$ is a cycle or a tree follow the same template, except that
the cases are substantially more complicated, and for trees the embedding
depends on the structure of the tree relative to crosscuts. \qed

\section{Open problems}

When $G$ is a forest or a cycle, or $\sigma(G^+) = 2$, the main theorems in the survey show $\ex_r(n,G^+) \sim (\sigma(G^+) - 1){n - 1 \choose r}$.
It is an interesting general question to determine for which $r$-graphs $F$ one has $\ex_r(n,F) \sim (\sigma(F) - 1){n - 1 \choose r}$,
and furthermore, for which $F$ the extremal constructions all have a small transversal, or can be transformed
by deleting few edges to an $r$-graph with a small transversal. A general problem is to determine those graphs $G$ with  $\ex_r(n,G^+) \sim (\sigma(G^+) - 1){n \choose r - 1}$.
In the case $r = 3$, the proof template described in Section~\ref{template} breaks the argument into two broad cases depending on the size of the shadow.  If the shadow is very small, then the greedy procedure allows us to embed a rather large class of expansions, including those with treewidth at most two. It is convenient to use the following characterization of graphs with treewidth at most two to see this: start with an edge, and given any edge $xy$ in the current graph, add a new vertex $z$ and the two edges $xz$ and $yz$.  This leads us to pose the following problem:

\begin{problem} \label{tw} If $G$ has treewidth two,
determine whether $\ex_3(n,G^+) \sim (\sigma(G) - 1){n \choose 2}$.
\end{problem}

By Theorem \ref{k2t-expansion}, if $\sigma(G^+) = 2$ then $\ex_3(n,G^+) \sim (\sigma(G) - 1){n \choose 2}$. The next case $\sigma(G^+)=3$ is wide open.
\begin{problem}
Determine the order of magnitude of $\ex_3(n,G^+)$ when $G$ is a graph with $\sigma(G^+) = 3$.
\end{problem}

The results of~\cite{KMV3} show $\ex_3(n,K_{3,t}^+)/n^2 \rightarrow \infty$ as $t \rightarrow \infty$,
so $\ex_3(n,G^+)$ probably depends substantially on the structure of $G$. However, one may
ask if $\ex_3(n,G^+) = O(n^2)$ when $\sigma(G^+) = 3$.

\medskip

A general problem is to determine for which $r$-graphs $F$ one has $\ex_r(n,F) = O(n^{r-1})$. Theorem \ref{generalbipartite} shows that if $F = G^+$
and $G$ is a bipartite graph with $\ex_2(n,G) = O(n^{\varphi})$ where $\varphi = \frac{1}{2}(\sqrt{5}- 1)$,
then $\ex_3(n,G^+) = O(n^2)$, and this gives $\ex_3(n,Q_3^+) = O(n^2)$ and also $\ex_3(n,G^+) = O(n^2)$
when $G$ is a bipartite graph such that every vertex in some part of $G$ has degree at most two. On the other hand,
we saw that $\ex_3(n,G^+)$ is not quadratic in $n$ if in every 3-coloring of $G$, every pair of color classes
induces a cycle, or if $|G| \geq 3|V(G)| - 5$. We pose the following problem. An {\em acyclic coloring} of a graph $G$
is a proper vertex-coloring of $G$ such that any pair of color classes induce a forest.

\begin{problem}
Determine whether $\ex_3(n,G^+) = O(n^2)$ for every graph $G$ with an acyclic 3-coloring.
\end{problem}

In particular, every wheel graph $W_k$ with $k + 1$ vertices, with $k$ even, has an acyclic 3-coloring,
and the above problem is open even for $W_4$. We may phrase the above problem in the language of graphs:
if an $n$-vertex graph has a superquadratic number of triangles, does it contain every graph with a bounded number of
vertices and an acyclic 3-coloring, and in particular, every wheel graph $W_k$ with $k$ even?

\medskip

Recall that a graph is 2-degenerate if each of its subgraphs has minimum degree at most two. Every graph with treewidth two is 2-degenerate, but not vice versa, for example the graph obtained from $K_4$ by subdividing an edge is 2-degenerate but has treewidth three.   Burr and Erd\H os~\cite{BE} conjectured that for every 2-degenerate bipartite graph $G$, $\ex_2(n,G) = O(n^{3/2})$.
Using Theorem \ref{generalbipartite}, this would show $\ex_3(n,G^+) = O(n^2)$ for every 2-degenerate bipartite graph $G$.
It is therefore perhaps reasonable to expect that every 2-degenerate graph $G$ has $\ex_3(n,G^+) = O(n^2)$ -- note that these
graphs all have an acyclic 3-coloring.
\begin{problem}
If $G$ is a 2-degenerate graph, is $\ex_3(n,G^+) = O(n^2)$?
\end{problem}
If $G$ is a bipartite graph such that all the vertices in one part have degree at most two, then it is known that $\ex(n,G) = O(n^{3/2})$ by the method of Dependent Random Choice~\cite{FoxS},
and therefore in this case $\ex_3(n,G^+) = O(n^2)$.

\medskip

Finally, we mention that one can also consider expansions of hypergraphs: if $G$ is an $s$-graph
then the $r$-uniform expansion $G^+$ of $G$ is obtained by adding a set of $r - s$ vertices to each edge of $G$,
with disjoint sets for distinct edges. Recently, F\"{u}redi and Jiang~\cite{FJs} extended Theorem~\ref{main-tree}
to a large class of $r$-uniform expansions of $s$-graphs forests. Also, the first author and Stading~\cite{MSt} determined the asymptotics
for $\ex_r(n, P_t(s)^+)$ for $r \ge 2s$, where
$P_t(s)$ is the $2s$-graph consisting of $t$ edges
$e_1, \ldots, e_t$ where consecutive edges intersect in $s$ points and nonconsecutive edges are pairwise disjoint.

\medskip

In general, determining $\ex_r(n, G^+)$ for an $s$-graph $G$ is a wide open problem with almost no general results known.
An interesting case is when $G$ is a tight tree. A {\em tight tree} is a hypergraph with edge set $\{e_1,e_2,\dots,e_k\}$ such that for every $i \geq 2$, there exists $j < i$ such that
$e_i \cap \bigcup_{h = 1}^{i - 1} e_h \subset e_j$. In particular, if all the $e_i$ have size two, then we have the usual definition of a graph tree. An important conjecture of Kalai~\cite{FF},
extending the well-known Erd\H{o}s-S\'{o}s conjecture for trees in graphs~\cite{ErdosSos}, states that if $T$ is an $r$-uniform tight tree with $k$ vertices, then $\ex_r(n,T) \leq \frac{k - r}{r} {n \choose r - 1}$. This conjecture remains open in all but a very few cases,
and the proof of the Erd\H{o}s-S\'{o}s conjecture for graphs was only recently claimed. The following general question is raised for expansions:

\begin{problem}
Determine $\lim_{n \rightarrow \infty} \frac{\ex_r(n,T^+)}{{n \choose r - 1}}$ when $T$ is the shadow of an $r$-uniform tight tree.
\end{problem}

Problem \ref{tw} asks whether the limit is $\sigma(T^+) - 1$ in the case $r = 3$, since the shadow of a 3-uniform tight tree is a graph of treewidth two.

\end{document}